\PassOptionsToPackage{colorlinks=true, linkcolor=blue, bookmarks=false}{hyperref}
\documentclass[11pt]{article}
\usepackage{latexsym,amsmath,amsthm,amssymb}
\usepackage[sorted]{amsrefs}
\usepackage{comment}
\usepackage{graphicx}
\usepackage[colorlinks=true, linkcolor=blue]{hyperref}
\usepackage{geometry}
\usepackage{enumitem}
\usepackage{mathrsfs}
\usepackage{tikz}
\usepackage{mathtools}
\usepackage{float}
\usepackage{nccmath}
\usepackage{xcolor}
\usepackage{indentfirst}
\usepackage{array,multirow,makecell}

\newtheorem{theorem}{Theorem}[section]
\newtheorem{proposition}[theorem]{Proposition}

\newtheorem{conjecture}[theorem]{Conjecture}

\newtheorem{corollary}[theorem]{Corollary}
\newtheorem{question}[theorem]{Question}
\newtheorem{obs}[theorem]{Observation}

\newtheorem{definition}[theorem]{Definition}
\newtheorem{remark}[theorem]{Remark}

\def\yoa#1{{\color{blue}{({\sc Alexander says: }{\marrow\sf #1})}}}

\def\george#1{{\color{red}{({\sc George says: }{\marrow\sf #1})}}}

\newcommand{\floor}[1]{\lfloor #1 \rfloor}
\newcommand{\Floor}[1]{\left\lfloor #1 \right\rfloor}

\title{Rainbow Separating Path Systems}
\author{Alexander Clifton\thanks{Czech Technical University in Prague, Prague, Czechia. Supported by grant 23-06815M of the Grant Agency of the Czech Republic. \texttt{alexander.clifton@fit.cvut.cz}} \and George Kontogeorgiou\thanks{Center for Mathematical Modeling (CNRS IRL2807), University of Chile. Supported by ANID Basal Grant CMM FB210005 and ANID-FONDECYT Postdoctorado Grant No. 3250479. \texttt{gkontogeorgiou@dim.uchile.cl}
} \and S Taruni\thanks{Department of Mathematics and Statistics, University of Otago. \texttt{taruni.sridhar@gmail.com}} \and Ana Trujillo-Negrete\thanks{Universidad Nacional Autónoma de México, CDMX, Mexico. Supported by the Universidad Nacional Autónoma de México Postdoctoral Program (POSDOC), and by ANID Basal Grant CMM FB210005.  \texttt{ltrujillo@ciencias.unam.mx}}}

\date{\today}

\begin{document}

\maketitle
\begin{abstract}
    We introduce a colorful version of separating path systems, in which two edges can only be separated from each other by two paths of distinct colors. We calculate the minimum sizes of such systems for various standard classes of graphs and numbers of colors. With respect to this setup, we identify three possible asymptotic behaviors for a class of graphs as the number of colors goes to infinity, and we find a wide range of examples that display each of these behaviors.    
\end{abstract}
\section{Introduction}
Let $G$ be a graph and let $\mathscr{P}$ be a set of paths of $G$. We say that $\mathscr{P}$ is a \emph{strongly separating path system} if for every ordered pair of distinct edges $(e,f)\in E(G)^2$, there exists a path in $\mathscr{P}$ that contains $e$ and avoids $f$. We say that $\mathscr{P}$ is a \emph{weakly separating path system} if for every pair of edges $\{e,f\}\in{E(G)\choose 2}$ there exists a path in $\mathscr{P}$ that contains exactly one of them. The \emph{strong} (resp. \emph{weak}) \emph{separation number} of $G$, denoted $ssp(G)$ (resp.~$wsp(G)$), is the least possible size of a path system that strongly (resp.~weakly) separates $E(G)$. Since strongly separating path systems are also weakly separating, $wsp(G)\leq ssp(G)$ for every graph $G$.

The problem of computing the separation numbers of various graphs is now over a decade old. It was first stated in its current form by Gyula O. H. Katona (see \cite{nlogn}). Falgas-Ravry, Kittipassorn, Kor\'andi, Letzter and Narayanan \cite{nlogn} were the first to publish results in the weak setting, whereas Balogh, Csaba, Martin and Pluh\'ar \cite{balogh2016path} pioneered the strong setting. Both groups conjectured a linear upper bound in the number of vertices; an elegant proof of this conjecture was later given by Bonamy, Botler, Dross, Naia and Skokan \cite{19n}, establishing the bound $ssp(G)\leq 19|V(G)|$. Moreover, these numbers have been calculated exactly for trees \cite{balogh2016path, arrepol2023separating}, asymptotically for dense, almost-regular, robustly connected graphs \cite{nico}, and up to an additive constant for complete graphs \cite{me}. Some authors have recently studied variations of this topic, such as path systems that separate vertices instead of edges \cite{biniaz2023separating, lichev2024vertex} and separation systems that consist of subdivisions of a given graph instead of paths \cite{botler2024separating}. In this paper we introduce a generalization of separating path systems, namely \emph{rainbow separating path systems}.

\begin{definition}
    A multiset of paths $\mathscr{P}$ in a graph $G$ is a $k$-rainbow separating path system (or $k$-RSPS for short) if its elements can be colored with $k\in\{2,3,\dots\}$ colors so that for every pair of edges $\{e_1,e_2\}\in {E(G)\choose 2}$ there exist paths $P_1,P_2\in \mathscr{P}$ of distinct colors such that $e_1\in P_1\not\owns e_2$ and $e_2\in P_2\not\owns e_1$. The $k$\emph{-rainbow separation number} of $G$, denoted by $c_k(G)$, is the smallest cardinality of a $k$-RSPS of $G$.   
\end{definition} 

Clearly, $c_k(G)$ is non-increasing in $k$. The first thing to notice is that strongly separating path systems are in fact RSPS's in which we are allowed the use of infinitely many colors, so we can write $ssp(G)=c_{\infty}(G)$. RSPS's are therefore the result of a natural restriction imposed on strongly separating path systems; we ask, quite simply, what if we only had some specified finite number of available colors? One might expect that this would always force us to use significantly more paths in order to separate $E(G)$. 

Let us explain exactly how we quantify this. For a graph $G$ and a number $k\in\{2,3,\dots\}\cup\{\infty\}$, we define the $k$-\emph{rainbow separation ratio} \[r_k(G):=\frac{c_k(G)}{c_{\infty}(G)}.\] As we shall see (Proposition \ref{inequality}), $r_k(G)\leq 2$ for every graph $G$ and $k\in\{2,3,\dots\}\cup\{\infty\}$. Given a class (that is, an infinite family) $\Sigma$ of graphs, the \emph{$k$-rainbow separation ratio} of $\Sigma$ is defined to be \[r_k(\Sigma):=\limsup_{G\in\Sigma} r_k(G).\] 
The $k$-rainbow separation ratio measures the penalty that we incur for attempting to separate the edges of graphs of $\Sigma$ with only $k$ (rather than infinitely many) colors.

Surprisingly, we will see that, in many cases, a small number of colors $k$ suffices to (almost) attain the strong separation number, that is, to achieve $r_k(\Sigma)=1$. For example, for any tree $T$ on $n$ vertices, four colors suffice to rainbow separate $E(T)$ with at most $n-1$ paths (Theorem \ref{fourcolortrees}), which is tight for \emph{pathy} classes of trees (i.e. trees with $o(n)$ vertices of degree at least $3$) up to a sublinear error. In particular, for the class $\mathcal{SP}$ of spiders we have $r_4(\mathcal{SP})=1$. However, three colors are not enough: we show that there exist spiders on $n$ vertices that cannot be $3$-rainbow separated with fewer than $\frac{17}{16}(n-1)-3$ paths (Theorem \ref{threecolorspiders}). In contrast, the strong separation number of a spider on $n$ vertices is $n-1$, so $r_3(\mathcal{SP})\geq\frac{17}{16}$. For two colors, the difference is even more pronounced, as stars require $2\lfloor\frac{2(n-1)}{3}\rfloor$ paths for $2$-rainbow separation (Theorem \ref{thm:stars}), so $r_2(\mathcal{SP})\geq \frac{4}{3}$. 

On the other extreme, we show that there exist classes $\Sigma$ of graphs for which, no matter how large the (finite and fixed) number $k$ of colors that we are allowed, we need many more paths to rainbow separate the edges of graphs from $\Sigma$ than to strongly separate them. That is, there exists some $c>1$ for which we have a uniform bound $r_k(\Sigma)>c$ for all $k\in\{2,3,\dots\}$. Examples of such classes include the class $\mathcal{K}$ of complete graphs (Theorem \ref{complete}), for which \[c_k(K_n)\geq \frac{3}{2}(n-O_k(1))>n+9\geq c_{\infty}(K_n),\] so that $r_k(\mathcal{K})\geq\frac{3}{2}$ for all $k\in\{2,3,\dots\}$, or in general the class $\mathcal{G}_p$ of Erd\H{o}s-R\'enyi random graphs with constant probability $p$ (Theorem \ref{densetheorem}), for which \[c_k(G(n,p))\geq \frac{3}{2}p (n-O_k(1))>\left(\sqrt{3p+1}-1+o(1)\right)n=c_{\infty}(G(n,p)),\]
so that $r_k(\mathcal{G}_p)\geq\frac{3p}{2(\sqrt{3p+1}-1)}>1$.

Finally, there are classes $\Sigma$ of graphs that fall in-between; the sequence $r_k(\Sigma)$ converges to $1$ as $k\rightarrow\infty$, without ever attaining equality. An example of such a class is the class $\mathcal{BT}$ of complete binary trees (Theorem \ref{binary}). 

Let us define precisely these distinct asymptotic behaviors. We say that a class $\Sigma$ of graphs is \emph{rainbow separable} if for every $\varepsilon > 0$, there exists $k\in\mathbb{N}$ such that $r_k(\Sigma) < 1+\varepsilon$. If a class $\Sigma$ of graphs is rainbow separable, then we define its \emph{chromatic separation number} to be \[\chi_{rs}(\Sigma):=\min\{k\in\{2,3,\dots\}\cup\{\infty\}|r_k(\Sigma)=1\}.\]

For example, the class $\mathcal{K}$ of complete graphs is not rainbow separable. The class $\mathcal{SP}$ of spiders has $\chi_{rs}(\mathcal{SP})=4$, whereas the class $\mathcal{BT}$ of complete binary trees has $\chi_{rs}(\mathcal{BT})=\infty$.  

This raises some intriguing questions. What determines whether a class $\Sigma$ of graphs is rainbow separable? If it is, what determines its chromatic separation number? Can any $k\in\{2,3,\dots\}$ be the chromatic separation number of some class?  

Some questions about the asymptotic behavior of rainbow separation ratios are handled in Section \ref{sec4}. Before that, in Section \ref{sec2} and Section \ref{sec3}, we display results concerning the $k$-rainbow separation numbers of various standard classes of graphs for $k=2$, $3$ and $4$. Specifically, we exactly determine or tightly bound these numbers for paths, cycles, stars, spiders, and pathy trees. \\In this manner, we obtain many working examples, establish useful tools, and assert that the rainbow separation numbers are essentially distinct from the classic ones. 

All of our graphs are simple, and for any graph denoted using subscripts, the first subscript refers to the number of vertices. 

\subsection{Preliminaries}

Here we state some known results that are used throughout the paper. The first one concerns the exact strong separation numbers of trees. 

\begin{proposition}(Theorem 5, \cite{balogh2016path})\label{trees-strong}
    Let $T$ be a tree with $d_1$ leaves and $d_2$ vertices of degree $2$. If $T$ is a path, then $ssp(T)=d_1+d_2-1$. Otherwise, $ssp(T)=d_1+d_2$. 
\end{proposition}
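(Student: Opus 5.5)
The plan is to prove a lower bound and a matching construction. Both rest on one observation about strong separation in a tree $T$. Suppose $e\ne f$ are adjacent edges meeting at a vertex $v$ of degree at most $2$, or $e$ is a leaf edge. Then some path must contain $e$ but not $f$.

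\textbf{Lower bound via endpoint counting.} We argue that paths must have many endpoints at the special vertices.

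Let $v$ be a leaf with edge $e$. Pick any other edge $f$; a path containing $e$ but avoiding $f$ is needed, and every path containing $e$ ends at $v$. So at least one path endpoint lies at every leaf.

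Let $v$ have degree $2$, with edges $e$ and $f$. Strong separation of the ordered pairs $(e,f)$ and $(f,e)$ requires a path through $e$ avoiding $f$ and a path through $f$ avoiding $e$. Both of these paths end at $v$, and they are distinct paths. So at least two path endpoints lie at every degree-$2$ vertex.

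Each path has two endpoints. Counting endpoints therefore gives $2|\mathscr P|\ge d_1+2d_2$.

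This bound is too weak by itself, so we refine it by counting endpoint-incidences more cleverly. For each leaf edge $e$ and any other edge $f$, separation is needed in both directions. Separating $(f,e)$ in particular forces a path through $f$ that stops before $e$.

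My plan is to use a charging argument: assign to each special vertex $v$ a weight of $1$ that is split among the paths ending there. For a degree-$2$ vertex, its two required endpoints each get $1/2$. For a leaf, its endpoint gets $1$. It then suffices to show that each path receives total weight at most $1$ in the non-path case. The obstacle is a path with both endpoints at leaves. To handle it, I would first show that one can assume, by an exchange argument, that no optimal system needs paths joining two leaves. Failing that, I would obtain the bound by induction, deleting a leaf edge, or by contracting a degree-$2$ vertex; for contraction I would check that $ssp$ drops by at least $1$.

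\textbf{The path case and the inductive step.} For the path case, the endpoint count already gives $n-1$ for a path on $n$ vertices, since $2|\mathscr P|\ge 2+2(n-2)$, up to the boundary correction.

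For the inductive lower bound, contract a degree-$2$ vertex $v$ with edges $e$ and $f$. Replace them by a single edge, and modify each path accordingly. The modified system separates the contracted tree. At least one path, the one through $e$ avoiding $f$ or vice versa, becomes redundant or merges with another, so $ssp$ decreases by at least $1$. This reduces the problem to trees with no degree-$2$ vertices. For those we need $ssp(T)\ge d_1$, and this is the genuine core. I would prove it by showing that each leaf edge needs a path ending at that leaf which is distinct for distinct leaves. Pairing leaves through a path would cause a failure to separate some pair near a branch vertex; this step I expect to be hardest.

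\textbf{Upper bound construction.} The construction goes by induction. Root $T$ at a branch vertex and pair up leaves along paths. Add, for each degree-$2$ vertex, a path ending there. Then verify separation for every ordered pair by a local check at the branch vertices.
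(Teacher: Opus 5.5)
The paper does not prove this proposition; it quotes it from Balogh et al. Your proposal therefore has to stand on its own. Your endpoint count is correct and settles the lower bound for paths. For non-paths, however, the plan has genuine gaps.

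\textbf{The exchange step in the charging argument is false.} Take the complete binary tree on $7$ vertices: root $r$ of degree $2$, children $u,w$, with leaves $a_1,a_2$ under $u$ and $b_1,b_2$ under $w$. A path through $ua_1$ that avoids $ur$ is either $\{ua_1\}$ or $a_1ua_2$. So a system with no leaf-to-leaf path must contain all four leaf edges as single-edge paths, plus two paths ending at $r$. That is $6>5=d_1+d_2$ paths, so every optimal system of this tree uses a leaf-to-leaf path.

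\textbf{``Weight at most $1$ per path'' also fails independently.} Consider the spider with three legs of length $2$, centre $c$, middle vertices $v_i$ and terminal edges $e_t^i$. The system $\{e_t^1\},\{e_t^2\},\{e_t^3\},v_1cv_2,v_2cv_3,v_3cv_1$ is optimal. In it, each $\{e_t^i\}$ is the only path at its leaf, so it receives $1+\tfrac12$.

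\textbf{The fallbacks are not justified.} Deleting a leaf whose neighbour has degree $2$ is fine, because the forced path $\{\ell u\}$ vanishes. But at an internal degree-$2$ vertex nothing forces a single-edge path, so after contraction no path need vanish or repeat. Merging the two paths that stop at $v$ can destroy separations among their other edges. The ``core'' inequality $ssp\ge d_1$ is left unproved, and your heuristic points the wrong way: the double star with centres $u,w$ is optimally separated by $a_1ua_2$, $a_2uwb_1$, $a_1uwb_2$, $b_1wb_2$, all of which join two leaves.

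\textbf{The missing idea.} If a leaf $\ell$ with neighbour $u$ lies on only one path, that path must avoid every other edge at $u$, since otherwise $\ell u$ is not separated from that edge. Hence the path is $\{\ell u\}$, and its second endpoint is at $u$. Use this in two steps.
\begin{enumerate}
\item Strip leaves adjacent to degree-$2$ vertices. Each step costs one path, lowers $d_1+d_2$ by exactly one, and keeps the tree a non-path.
\item In the stripped tree, every such forced second endpoint lies at a branch vertex. Charge it to $\ell$. Then every leaf and every degree-$2$ vertex receives at least two endpoints, and no endpoint is counted twice, so $2|\mathscr P|\ge 2d_1+2d_2$.
\end{enumerate}
Internal degree-$2$ vertices never need to be contracted.

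\textbf{The upper bound is not yet a construction.} Read literally, pairing up leaves and adding one path per degree-$2$ vertex gives about $d_1/2+d_2$ paths, which is below the lower bound. Moreover, one path ending at a degree-$2$ vertex cannot suffice; your own argument shows two are needed.

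One construction that works is as follows.
\begin{enumerate}
\item List the leaves $\ell_1,\dots,\ell_{d_1}$ in the order a contour walk around a plane drawing of $T$ meets them. Take the $d_1$ paths from $\ell_i$ to $\ell_{i+1}$, indices mod $d_1$.
\item The leaves on the two sides of any edge form complementary cyclic intervals. Hence every edge lies on exactly two of these paths, which are distinct because $d_1\ge 3$.
\item Two edges lie on the same two paths only if every internal vertex of the path joining them has degree $2$.
\item Along each maximal path $e_1,\dots,e_k$ whose internal vertices $v_1,\dots,v_{k-1}$ all have degree $2$ (so its edges share the same two paths $Q,Q'$), cut $Q$ at $v_1,v_3,\dots$ and $Q'$ at $v_2,v_4,\dots$.
\end{enumerate}
This adds exactly one path per degree-$2$ vertex. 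Every edge still lies on exactly two paths, and all these pairs are distinct, hence pairwise incomparable. This yields $d_1+d_2$ paths. For a path, take all its single edges.
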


In the following proposition, the lower bound is extracted from the proof of Theorem 1.6 in Falgas-Ravry et al. \cite{nlogn}. The upper bound is Theorem 3.3 from Arrepol et al. \cite{arrepol2023separating}.  

\begin{proposition}\label{trees-weak}
    Let $T$ be a tree with $d_1$ leaves and $d_2$ vertices of degree $2$. Then we have $ \lceil{\frac{2(d_1-1)+d_2}{3}\rceil}\leq wsp(T)\leq\max\{\lceil\frac{2d_1+d_2}{3}\rceil,\lceil\frac{d_1+d_2}{2}\rceil\}$.  
\end{proposition}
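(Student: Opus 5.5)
This statement combines two results from the literature, so the plan is to recover each bound from the arguments cited. Throughout, $T$ has $n$ vertices, $d_1$ leaves, $d_2$ vertices of degree $2$, and $d_{\ge 3}$ vertices of degree at least $3$.

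\textbf{Lower bound.} I would use a counting argument in the style of Falgas-Ravry et al. Let $\mathscr{P}$ be a weakly separating path system of size $m$.
\begin{enumerate}
\item At a leaf $v$ with pendant edge $e$, the edge $e$ can be covered by at most one edge-set that is empty, so all leaf edges except at most one lie on some path. Any path covering a leaf edge has $v$ as an endpoint.
\item At a degree-$2$ vertex $v$ with incident edges $e,f$, weak separation of $e$ and $f$ forces some path to contain exactly one of them. That path must terminate at $v$.
\item Assign to each leaf (except possibly one) and to each degree-$2$ vertex a path endpoint located at that vertex. Each path has exactly two endpoints, so a naive count gives $2m\ge (d_1-1)+d_2$.
\end{enumerate}
This naive count is too weak; the target bound is $3m\ge 2(d_1-1)+d_2$. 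The refinement I would pursue is that leaves are more demanding than degree-$2$ vertices.
\begin{enumerate}
\item Consider two leaves $u$ and $w$ whose pendant edges $e_u$ and $e_w$ are each covered exactly once, and only by the same single path, namely the path from $u$ to $w$.
\item Then $e_u$ and $e_w$ have identical incidence sets, so they are not separated.
\item Hence the pendant edges of leaves, except at most one, have pairwise distinct nonempty sets of covering paths.
\end{enumerate}
The plan is a weighting that gives each leaf weight $2$ and each degree-$2$ vertex weight $1$. I would then show each path accounts for weight at most $3$ in total. The idea is that a path ending at two leaves and separating them forces a second path to end at one of those leaves.

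This charging step is the main obstacle. I would first try to set it up as a linear-programming-style double count over pairs (path, endpoint). Being unsure of the exact charging, I would simply follow the proof of Theorem 1.6 of \cite{nlogn}, which the paper says it extracts this bound from.

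\textbf{Upper bound.} This is Theorem 3.3 of \cite{arrepol2023separating}. I would cite it directly rather than re-derive it. If a reconstruction were needed, I would proceed as follows.
\begin{enumerate}
\item Pair up the leaves and degree-$2$ vertices into paths, using each path to serve roughly three ``demands''. Concretely, a path joins two leaves and is separated from its partners via triples.
\item This gives $\lceil (2d_1+d_2)/3\rceil$ when leaves dominate.
\item When degree-$2$ vertices dominate, each path serves two degree-$2$ vertices or leaves, which gives $\lceil (d_1+d_2)/2\rceil$.
\item Taking the maximum of these two constructions gives the stated upper bound.
\end{enumerate}
Since the statement is explicitly a compilation of known results, the honest proof is to cite both sources. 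The only work is checking that their hypotheses match ours (simple trees, with no exceptions for paths).
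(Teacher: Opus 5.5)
Your plan matches the paper's treatment. The paper gives no argument of its own for this proposition: it attributes the lower bound to the proof of Theorem~1.6 in \cite{nlogn} and the upper bound to Theorem~3.3 of \cite{arrepol2023separating}, the same two sources you defer to. The charging step you flagged as the obstacle actually closes at once from your own observation. Let $a$ leaves be endpoints of exactly one path and $b$ leaves be endpoints of at least two. Those $a$ paths are pairwise distinct, so $a\le m$, and $a+b\ge d_1-1$ (for $T\neq K_2$, whose two leaves share their only edge). Counting endpoints then gives $2m\ge a+2b+d_2\ge 2(d_1-1)-a+d_2\ge 2(d_1-1)-m+d_2$, that is, $3m\ge 2(d_1-1)+d_2$.
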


The lower bound of the next proposition appeared for the first time in Falgas-Ravry et al. \cite{nlogn}, whereas the upper bounds were proved by the second author and Stein \cite{me}.

\begin{proposition}\label{cliques}
    For the clique $K_n$, we have $n-1\leq wsp(K_n)\leq n+1$ and $n-1\leq ssp(K_n)\leq n+9$.
\end{proposition}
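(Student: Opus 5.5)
Proposition \ref{cliques} is quoted from \cite{nlogn} and \cite{me}, and the paper relies on it rather than reproving it. If I had to reconstruct it, I would split it into the common lower bound $n-1$ and the two constructive upper bounds. Since $wsp\le ssp$, it suffices to prove $wsp(K_n)\ge n-1$, $wsp(K_n)\le n+1$ and $ssp(K_n)\le n+9$.

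For the lower bound I would use a local analysis at each vertex combined with a global count of path endpoints. Fix a weakly separating system $\mathscr P$ with $m$ paths and a vertex $v$. Build an auxiliary multigraph $H_v$ whose nodes are the $n-1$ edges at $v$:
\begin{itemize}[label={}]
\item each path through $v$ as an interior vertex contributes an edge of $H_v$ joining the two edges of the path at $v$;
\item each path ending at $v$ contributes a half-edge (a pendant mark) at one node.
\end{itemize}
Two edges $e,f$ at $v$ are separated exactly when some element of $H_v$ meets one of them but not the other. So the only configurations that fail are:
\begin{itemize}[label={}]
\item two isolated nodes;
\item a component consisting of just the pair $\{e,f\}$, joined only to each other and carrying no half-edge.
\end{itemize}
This forces the number of edges plus half-edges of $H_v$ to be at least about $\tfrac{2}{3}(n-1)$. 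When few paths end at $v$, it essentially forces $H_v$ to have few components, hence about $n-2$ interior passages through $v$.

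Summing over $v$ is not enough by itself, because a long path has many interior vertices. The global ingredient is the endpoint count: the $m$ paths have only $2m$ endpoints in total. If $m\le n-2$, then many vertices are endpoints of at most one path. I would then combine the local constraint at two such vertices $u,v$ with the requirement that the edge $uv$ be separated from the edges adjacent to it, aiming for a contradiction. I expect this step, turning the local counts into exactly $n-1$, to be the main obstacle. If the combinatorial route stalls, I would first try a linear-algebra variant instead: view the incidence vectors of the paths over $\mathbb F_2$, note that separation makes all edge columns distinct, and bound the rank using the degree constraint that each path meets each vertex in at most two edges.

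For the upper bounds I would build on the Walecki decomposition of $K_n$ ($n$ even, say) into $n/2$ zigzag Hamiltonian paths $P_1,\dots,P_{n/2}$. Every edge lies in exactly one $P_i$, so two edges in different $P_i$ are already weakly separated. To separate edges within a single $P_i$, I would add about $n/2$ further paths that cut each $P_i$ into segments, arranged so that any two edges of the same zigzag path are distinguished. The natural choice is a second, rotated family of zigzags whose intersections with each $P_i$ alternate. For the weak bound, the goal is to control the leftover cases with at most one extra path, giving $n+1$. For the strong bound the same two rotated families give each edge two coordinates, and most ordered pairs are then separated. The remaining constant number of defects, coming from parity and from the endpoints of the zigzags, I would fix by hand with a bounded number of extra short paths, aiming for $n+9$. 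Odd $n$ would be handled by deleting a vertex from the even case and shortening the paths accordingly.
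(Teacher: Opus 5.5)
Both halves of your sketch have genuine gaps. The paper only quotes this proposition, so I am measuring it against the standard arguments.

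\emph{Lower bound.} You flag the key step as the main obstacle and never carry it out, and the local claim it relies on is false. Even if no path ends at $v$, the graph $H_v$ may split into about $(n-1)/3$ components, each consisting of two passages $ab$ and $bc$. Every pair of edges at $v$ is then separated, so separation at $v$ forces only about $\frac23(n-1)$ passages, not $n-2$.

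What rules this out is a global fact, not the endpoint count. In such a component, $a$ and $c$ each lie in exactly one path of $\mathscr P$. No path can contain two edges that each lie in exactly one path, since they would have identical incidence sets. So there are at most $m$ such edges overall. Combine this with two further facts: at most one edge is uncovered, and $|E(P)|\le n-1$ for every path of $K_n$. This gives the standard double count
\[ m(n-1)\;\ge\;\sum_{P\in\mathscr P}|E(P)|\;\ge\;2\Bigl(\binom n2-1-m\Bigr)+m, \]
so $mn\ge n(n-1)-2$, and hence $m\ge n-1$ for $n\ge 3$. So the bounded length of paths is exactly what makes a global sum work, contrary to your worry about long paths. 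The $\mathbb F_2$ fallback gives nothing of this strength: distinct columns only force $2^m\gtrsim\binom n2$.

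\emph{Upper bounds.} The Walecki plan cannot reach $n+O(1)$ paths, whatever the added paths are. If $P_1,\dots,P_{n/2}$ partition $E(K_n)$, then two edges of the same $P_i$ can only be separated by the added paths $Q_1,\dots,Q_t$. Inside $E(P_i)$, at most one edge lies in no $Q_j$. Each $Q_j$ contains at most one edge of $P_i$ that lies in no other $Q_{j'}$. So the $Q_j$ have at least $2(n-2)-t$ edge-incidences with $E(P_i)$. Summing over the $n/2$ paths $P_i$ and using $|E(Q_j)|\le n-1$ gives
\[ t(n-1)\;\ge\;\frac n2\,(2n-4-t),\qquad\text{i.e.}\qquad t\;\ge\;\frac{2n(n-2)}{3n-2}. \]
So such a system has roughly $\frac{7n}{6}$ paths. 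Already $t=\frac n2+1$ is impossible for every even $n\ge 12$, and strong separation is only harder. No bounded number of repairs by hand can close a linear deficit.

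The lower-bound double count also shows what a near-optimal system must look like. Almost every edge lies in exactly two paths, and such edges need pairwise distinct pairs of paths. Hence all but $O(n)$ pairs of paths must share an edge. Pairs inside an edge-disjoint family share none, so the paths cannot be organized around an edge decomposition. The right template is rather an orthogonal double cover of $K_n$ by (nearly) Hamiltonian paths, in which any two paths share exactly one edge.
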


Finally, we will need the following simple inequality.

\begin{proposition}\label{inequality}
    For every graph $G$ and $k\in\{2,3\dots\}$, $$\frac{k}{k-1}wsp(G)\leq c_k(G)\leq c_2(G)\leq wsp(G)+ssp(G).$$
\end{proposition}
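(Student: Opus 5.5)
We prove the three inequalities separately, in the order they appear.

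\emph{Left inequality.} Take a $k$-RSPS $\mathscr{P}$ of size $c_k(G)$ with a valid coloring, and let $\mathscr{P}_1,\dots,\mathscr{P}_k$ be its color classes. We claim that for each color $i$, the family $\mathscr{P}\setminus\mathscr{P}_i$ is still weakly separating. Fix a pair $\{e_1,e_2\}$. The definition supplies paths $P_1\ni e_1$ and $P_2\ni e_2$ of distinct colors, with $P_1\not\owns e_2$ and $P_2\not\owns e_1$. At most one of $P_1,P_2$ has color $i$, so the other one survives in $\mathscr{P}\setminus\mathscr{P}_i$. That surviving path contains exactly one of $e_1,e_2$, so it weakly separates them. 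Hence $|\mathscr{P}|-|\mathscr{P}_i|\geq wsp(G)$ for every $i$. Summing over $i=1,\dots,k$ gives $(k-1)|\mathscr{P}|\geq k\cdot wsp(G)$, which is the claimed bound. (Repeated paths in the multiset cause no problem here.)

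\emph{Middle inequality.} This is immediate. A $2$-coloring is a $k$-coloring that simply leaves $k-2$ colors unused, so every $2$-RSPS is a $k$-RSPS and $c_k(G)\leq c_2(G)$.

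\emph{Right inequality.} This is the step that needs a construction. Let $\mathscr{W}$ be a weakly separating system of size $wsp(G)$ and $\mathscr{S}$ a strongly separating system of size $ssp(G)$. Color every path of $\mathscr{W}$ red and every path of $\mathscr{S}$ blue, and take the multiset union; it has size $wsp(G)+ssp(G)$. Fix a pair $\{e_1,e_2\}$. Some red path $W\in\mathscr{W}$ contains exactly one of the two edges; say $e_1\in W\not\owns e_2$. Strong separation gives a blue path $S\in\mathscr{S}$ with $e_2\in S\not\owns e_1$. Then $W$ and $S$ have distinct colors and witness rainbow separation of the pair, so the union is a $2$-RSPS.

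The only real idea is the averaging over color classes in the left inequality; the other two parts are direct.
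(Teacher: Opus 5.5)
Your proof is correct and follows essentially the same route as the paper. For the lower bound, the paper also observes that deleting any one color class leaves a weakly separating system and then sums the $k$ resulting inequalities. For the upper bound, it likewise colors an optimal weakly separating system and an optimal strongly separating system with two distinct colors; you simply spell out the checks, including the middle inequality, that the paper leaves implicit.
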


\begin{proof}
For the lower bound we observe that, in every $k$-RSPS of any graph, the set of all paths in any $k-1$ colors is weakly separating. If we denote by $\mathcal{F}_k$ an optimal $k$-RSPS of $G$, and by $p_i$ the number of paths of color $i$ in $\mathcal{F}_k$, then this observation yields $k$ inequalities of the form

\[c_k(G)-p_i=|\mathcal{F}_k|-p_i\geq wsp(G),\]
that we can subsequently sum by parts and divide by $k-1$ to obtain the desired result.  

For the upper bound, given a strongly and a weakly separating path system on a graph, we can color them in two distinct colors to obtain a $2$-RSPS. 
\end{proof}

\section{Two colors} \label{sec2}

\subsection{Paths, cycles and stars}\label{subsection}

\begin{theorem}\label{thm: paths}
For the path $P_n$, we have $c_2(P_3)=2$, $c_2(P_n)=n$ for $n=4,5,6$, and $c_2(P_n)=n+1$ for $n\ge 7$.
\end{theorem}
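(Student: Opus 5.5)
The plan is to treat $P_n$ as the sequence of edges $e_1,\dots,e_{n-1}$ with internal vertices $v_1,\dots,v_{n-2}$, where $v_i$ is the common vertex of $e_i$ and $e_{i+1}$. Every path in a system is then an interval of consecutive edges, and I would reduce everything to counting interval endpoints. For the upper bounds I would give explicit colored interval systems. The values $c_2(P_3)=2$ and $c_2(P_n)=n$ for $n=4,5,6$ should come out of a small direct search or hand construction. For $n\ge 7$ I would try a periodic construction with $n+1$ paths. One candidate is to take the single-edge intervals (or length-two intervals) alternating in color, plus two long intervals $\{e_1,\dots,e_{n-2}\}$ and $\{e_2,\dots,e_{n-1}\}$, which fix the boundary. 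Checking a candidate means checking that every pair $e_i,e_j$ is separated in both directions by intervals of different colors. For pairs with $|i-j|\ge 2$ this should be easy from the local intervals; the consecutive pairs and the pairs involving $e_1,e_{n-1}$ are the delicate ones. The color pattern will have to be tuned so that the local separators at each $v_i$ get opposite colors.

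For the lower bounds I would start from the pair $\{e_i,e_{i+1}\}$. Separating it requires an interval containing $e_i$ but not $e_{i+1}$, so an interval ending at $v_i$ from the left. It also requires an interval ending at $v_i$ from the right, and these two intervals must have distinct colors. So each of the $n-2$ internal vertices carries at least two interval endpoints. In addition, $e_1$ and $e_{n-1}$ must each lie in some interval (indeed in intervals of both colors, by separating them from their neighbors appropriately), and such an interval has an endpoint at a leaf. Counting endpoints gives $2|\mathscr{P}|\ge 2(n-2)+2$, hence $|\mathscr{P}|\ge n-1$. Refining the leaf count should give $c_2(P_n)\ge n$: for instance, $e_1$ must be in a path of each color in order to be separated from $e_2$ with both color assignments possible, which forces extra leaf endpoints or an extra endpoint at some internal vertex.

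The main obstacle is the bound $c_2(P_n)\ge n+1$ for $n\ge 7$. Here I would assume a system with exactly $n$ paths and analyze its rigid structure. The endpoint count then leaves only about two units of slack, so almost every internal vertex $v_i$ is the endpoint of exactly one left-ending interval $L_i$ and exactly one right-ending interval $R_i$, of opposite colors. I would then look at nonadjacent pairs $e_i,e_j$ with $i<j$. Every interval containing $e_i$ but not $e_j$ ends at one of $v_i,\dots,v_{j-1}$ from the left side, so it is one of $L_i,\dots,L_{j-1}$. Similarly, every interval containing $e_j$ but not $e_i$ is one of $R_i,\dots,R_{j-1}$. Requiring distinct colors among these, together with the alternation constraint $\mathrm{col}(L_i)\ne \mathrm{col}(R_i)$ and the fact that each interval is some $L$ on its right end and some $R$ on its left end, should produce a propagation of colors along the path. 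I expect this propagation to be forced into a contradiction once there are enough internal vertices, namely at least five for $n\ge 7$. The few places where the slack is spent (doubled endpoints near the leaves) would be handled by a short case analysis.
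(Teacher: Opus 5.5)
Your proposal is a plan rather than a proof, and the steps that carry the theorem are either missing or incorrect.

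\textbf{Upper bound and the bound $c_2(P_n)\ge n$.} For the upper bound at $n\ge 7$ you give no system, and both of your candidates fail.
\begin{itemize}
\item With alternating singletons plus $\{e_1,\dots,e_{n-2}\}$ and $\{e_2,\dots,e_{n-1}\}$, the edges $e_2,\dots,e_{n-2}$ (at least four of them) lie in both long paths. So any two of them are separated only by their own singletons, and two of these singletons share a color however you tune the coloring.
\item The length-two variant has only $n$ paths, which is exactly what the theorem rules out.
\end{itemize}
The paper builds $\mathcal{F}_n$ inductively from $P_7$. It maintains that $e_{n-1}$ is a single-edge path of some color $c$ and that $e_{n-2}$ lies in a $c$-path avoiding $e_{n-1}$. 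It then extends both of these paths by one edge and adds a singleton of the other color on the new edge.

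Your justification of $c_2(P_n)\ge n$ is false. Taking $\{e_1\},\{e_1,e_2\}$ red and $\{e_2,e_3\},\{e_3\}$ blue gives a $2$-RSPS of $P_4$ in which $e_1$ lies only in red paths; one edge per color may be monochromatic. A correct argument: with $n-1$ paths, each of $v_0,\dots,v_{n-2}$ is a left endpoint exactly once and each of $v_1,\dots,v_{n-1}$ a right endpoint exactly once. Hence every edge lies in exactly one path, all paths are singletons, and two of them share a color.

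\textbf{The main bound $c_2(P_n)\ge n+1$.} This is only conjectured (``I expect this propagation to be forced into a contradiction''), and the claim that the slack is spent near the leaves is unproved.

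The paper's key idea is a reduction. The only path containing $e_{n-1}$ but not $e_{n-2}$ is the singleton $\{e_{n-1}\}$, and it separates no pair among $e_1,\dots,e_{n-2}$. Hence $c_2(P_n)\ge c_2(P_{n-1})+1$, which reduces everything to $c_2(P_4)\ge 4$ and $c_2(P_7)\ge 8$. The latter is a finite case analysis using the bound $\lceil\log_2 m\rceil$ for weak separation.

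Your endpoint framework can also be finished, but only with an ingredient you do not have: count left and right endpoints separately.
\begin{itemize}
\item With $n$ paths there is exactly one extra left endpoint $v_a$ and one extra right endpoint $v_b$. So $e_i$ lies in exactly $1+[a<i]-[b<i]$ paths (Iverson brackets).
\item This forces $a<b$. The edges outside $e_{a+1},\dots,e_b$ are singly covered, hence monochromatic, so there are at most two of them.
\item The paths over $e_{a+1},\dots,e_b$ split into two tilings with complementary breakpoints.
\item Each tiling must alternate in color.
\item No tile can contain three consecutive edges, since the first and third of them would be separable only by two same-colored tiles of the other tiling.
\item Only the end tiles can be singletons.
\end{itemize}
The resulting staggered period-$4$ color pattern produces two monochromatic edges of the same color, or at least three monochromatic edges in total, once $n\ge 7$. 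Until one of these arguments is supplied, the lower bound is not established.
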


\begin{proof}

We firstly prove the upper bounds by explicit construction. For $3\leq n\leq 7$, refer to Figure~\ref{fig:paths-upper}.

\begin{figure}[h!]
    \centering
    \includegraphics[width=0.6\textwidth]{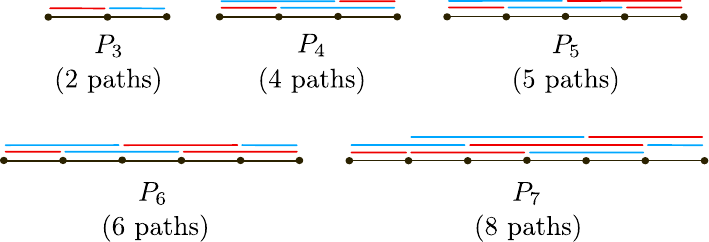}
    \caption{Optimal $2$-RSPS's for $P_n$, with $n=3,\dots,7$. }
    \label{fig:paths-upper}
\end{figure}

Consider the path $P_n=v_1\dots v_n$ for $n \geq 7$, and let $e_i=v_{i}v_{i+1}$ for $1\le i\le n-1$. For $0\le i\le n$, let $c_i$:=red if $i\equiv 0\pmod{2}$ and $c_i$:=blue if $i\equiv 1\pmod{2}$. We construct a $2$-RSPS $\mathcal{F}_n$ for $P_n$ satisfying the following properties: 
\begin{enumerate}[label=(\roman*)]
    \item \label{paths-1} Each edge belongs to at least one red path and one blue path.
    \item \label{paths-2} $e_{n-1}$ belongs to a single-edge $c_n$-path and $e_{n-2}$ belongs to a $c_n$-path that does not contain $e_{n-1}$.
    
\end{enumerate}

We proceed by induction on $n$. The path system shown in Figure~\ref{fig:paths-upper} for $P_7$ satisfies conditions~\ref{paths-1}~and~\ref{paths-2}, providing the base case. 
 
Assuming that $\mathcal{F}_n$ is a $2$-RSPS for $P_n$ satisfying~\ref{paths-1}~and~\ref{paths-2}, we construct a path system $\mathcal{F}_{n+1}$ for $P_{n+1}$ as follows:
\begin{enumerate}
    \item Extend the single-edge $c_n$-path containing $e_{n-1}$ to include $e_{n}$, and include it in $\mathcal{F}_{n+1}$. 
    \item Extend the $c_n$-path that contains $e_{n-2}$ but not $e_{n-1}$ to include $e_{n-1}$, and include it in $\mathcal{F}_{n+1}$.
    \item Include all the remaining paths from $\mathcal{F}_n$ in $\mathcal{F}_{n+1}$ without modification.
    \item Add a new single-edge $c_{n+1}$-path containing only $e_{n}$.
\end{enumerate}
See Figure~\ref{fig:paths_extension} for an example where $n$ is odd. 

\begin{figure}[h!]
    \centering
    \includegraphics[width=0.8\textwidth]{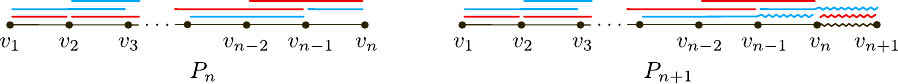}
    \caption{Inductive construction of a $2$-RSPS for $P_{n+1}$ from a $2$-RSPS for $P_n$.}
    \label{fig:paths_extension}
\end{figure}

Let us now show that $\mathcal{F}_{n+1}$ satisfies properties \ref{paths-1}~and~\ref{paths-2}. By the inductive hypothesis and the construction, each edge $e_i$ with $1\le i\le n-1$ belongs to both a red path and a blue path. \newpage Furthermore, $e_{n}$ belongs to the newly added single-edge $c_{n+1}$-path and to the extended $c_n$-path (which contains $e_{n-1}$). Moreover, $e_n$ belongs to a single-edge $c_{n+1}$-path, while $e_{n-1}$ belongs to a $c_{n+1}$-path that does not include $e_{n}$. 

We now show that $\mathcal{F}_{n+1}$ is a $2$-RSPS for $P_{n+1}$. Let $e_i$ and $e_j$ be distinct edges with $1\le i<j\leq n-1$. By the inductive hypothesis, there exist two paths in $\mathcal{F}_n$ of different colors that separate $e_i$ and $e_j$. These paths are contained in $\mathcal{F}_{n+1}$, and even if one of them is extended to include $e_n$, they still separate the edges $e_i$ and $e_j$. Moreover, $e_{n}$ is contained in a single-edge $c_{n+1}$-path, while every other edge belongs to a $c_n$-path that does not include $e_{n}$, ensuring that $e_{n}$ is separated from all other edges. 

Finally, $|\mathcal{F}_7|=8$, and we have $|\mathcal{F}_{n}|=|\mathcal{F}_{n-1}|+1$, which implies $|\mathcal{F}_n|=n+1$ for all $n\ge 7$.  

Now we prove the corresponding lower bounds. In order to separate $e_{n-2}$ from $e_{n-1}$, a $2$-RSPS must always contain a path consisting of just the edge $e_{n-1}$. Such a path does not separate pairs of edges among $e_1,\dots,e_{n-2}$, so $c_2(P_n)\ge c_2(P_{n-1})+1$ for $n\ge 3$. Thus, to establish matching lower bounds, it suffices to show that $c_2(P_3)\ge 2$, $c_2(P_4)\ge 4$, and $c_2(P_7)\ge 8$.

For $P_3$, $c_2(P_3)\geq ssp(P_3)=2$. For $P_4$, there must be a path consisting solely of $e_1$ and another consisting solely of $e_3$. As a $2$-RSPS must contain a path with $e_2$ but not $e_1$ as well as a path with $e_2$ but not $e_3$, we either have at least four paths, or the third path must consist solely of $e_2$. However, in the latter case, at least two of the single-edge paths are the same color, so the corresponding edges are not separated, meaning that $c_2(P_4)\ge 4$.

Finally, we show that $c_2(P_7)\ge 8$. For the sake of contradiction, consider a $2$-RSPS of $P_7$ consisting of at most seven paths. Among these, consider a collection $\mathcal{F}$ of $A$ red paths and $B$ blue paths of length $1$ such that no edge is contained in two of them. Since there must be single-edge paths covering $e_1$ and $e_6$, we can assume that $2\le A+B\le 6$, and we assume without loss of generality that $B\ge A$. Ignoring the $A+B$ paths in $\mathcal{F}$, the remaining paths in the $2$-RSPS form a red weakly separating path system on the $6-A$ edges not covered by the red paths in $\mathcal{F}$ as well as a blue weakly separating path system on the $6-B$ edges not covered by the blue paths in $\mathcal{F}$. 

For any set of $m>0$ edges, the size of a weakly separating path system is at least $\lceil{\log_2 m\rceil}$. Thus, for $A<6$, there are at least $A+\lceil{\log_2 (6-A)\rceil}$ red paths and when $B<6$, there are at least $B+\lceil{\log_2 (6-B)\rceil}$ blue paths. Noting that $x+\lceil{\log_2(6-x)\rceil}\ge 4$ for $x=1,2,3,4,5$, there are at least four paths of each color and hence eight total, unless $A=0$. The $2$-RSPS has at least three red paths in order to contain a red weakly separating path system, so we may assume $2\le B\le 4$. The size of the $2$-RSPS is at least $B+\lceil{\log_2 (6-B)\rceil}+3$. For $B=3,4$, this is at least $8$, so we may assume $B=2$. 

In this case, there is a blue single-edge path covering $e_1$ and a blue single-edge path covering $e_6$. The remaining blue paths form a weakly separating path system for $P_5$, so there are at least two. As there are at least three red paths, we necessarily have only two remaining blue paths and exactly three red paths in order for the $2$-RSPS to have size $7$.  Note that besides the single-edge paths covering $e_1, e_6$, we may assume that there are no other single-edge paths, as their presence would have allowed us to choose $A,B$ differently and already be done.

A blue weakly separating path system of size $2$ for $P_5$ requires one edge to be covered twice, two to be covered once (by different blue paths), and the last to not be covered at all. Up to symmetry, this can only be achieved by a path covering the first two edges and a path covering the middle two edges. That is, one remaining blue path must consist of $e_3$ and $e_4$, and the other must consist of $e_2$, $e_3$, and possibly $e_1$.

Thus, $e_5$ is not in any blue paths, so there must be a red path containing $e_5$, but not $e_4$. As all of our remaining paths have at least two edges, we must have a red path consisting of $e_5$ and $e_6$. The remaining two red paths must weakly separate the edges $e_1,e_2,e_3,e_4$. By our previous discussion for weakly separating $P_5$, one such red path must consist precisely of $e_2$ and $e_3$. At this point, there is no path containing $e_2$ but not $e_3$, and as our remaining paths have at least two edges each, the only way to do this is via a red path containing precisely $e_1$ and $e_2$. Now we have already specified seven paths, but still there is no path containing $e_4$ but not $e_3$. This yields a contradiction, so $c_2(P_7)\ge 8$.

\end{proof}

\begin{theorem}\label{thm: cycles}
	$c_2(C_n)=2\lceil{\frac{n}{2}\rceil}$.
\end{theorem}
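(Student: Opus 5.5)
The argument has two halves. The lower bound comes from counting path endpoints at each vertex of the cycle together with a parity argument. The upper bound comes from an explicit cyclic construction with paths of length $3$.

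\textbf{Lower bound.} Write $C_n=v_1v_2\dots v_nv_1$ and $e_i=v_iv_{i+1}$, with indices mod $n$. Consider two consecutive edges $e_{i-1},e_i$ meeting at $v_i$. A $2$-RSPS must contain a path $P$ that contains $e_{i-1}$ but not $e_i$, and a path $P'$ of the other color that contains $e_i$ but not $e_{i-1}$. Since $P$ is a path in a cycle using $e_{i-1}$ and avoiding $e_i$, it must have $v_i$ as an endpoint; the same holds for $P'$. Call such an incidence an \emph{end} at $v_i$. Then every vertex carries at least two ends, and these include one red end and one blue end. Every path has exactly two ends, so $2|\mathscr P|\ge 2n$, i.e.\ $c_2(C_n)\ge n$.

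Now let $n$ be odd and suppose $|\mathscr P|=n$. Then every vertex carries exactly two ends, one red and one blue. The red ends therefore number exactly $n$. But the red ends come in pairs, two per red path, so their number is even. This contradiction gives $c_2(C_n)\ge n+1=2\lceil n/2\rceil$. This step only uses adjacent pairs, so it is clean.

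\textbf{Upper bound, $n$ even.} For each $i$, let $Q_i=e_ie_{i+1}e_{i+2}$, a path from $v_i$ to $v_{i+3}$. Color $Q_i$ red for $i$ odd and blue for $i$ even; this is consistent around the cycle because $n$ is even. This gives $n$ paths.

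\emph{Adjacent edges.} The paths ending at $v_{i+1}$ are $Q_{i-2}$, which contains $e_i$ but not $e_{i+1}$, and $Q_{i+1}$, which contains $e_{i+1}$ but not $e_i$. Their indices differ by $3$, so they have distinct colors.

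\emph{Edges at distance $2$.} Take $e_i$ and $e_{i+2}$. The paths $Q_{i-2},Q_{i-1}$ contain $e_i$ but not $e_{i+2}$, and they have both colors. The paths $Q_{i+1},Q_{i+2}$ contain $e_{i+2}$ but not $e_i$, and they also have both colors. So we can choose one from each pair with distinct colors.

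\emph{Farther apart.} Each edge lies in three consecutive $Q$'s, which carry both colors, and none of them contains the other edge. The same choice works.

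This needs $n$ large enough that the $Q_i$ are genuine paths and the index windows do not wrap around, say $n\ge 6$. The cases $n=4$, and any other small exceptions, I would settle by hand.

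\textbf{Upper bound, $n$ odd.} I expect this to be the main obstacle, since the alternating coloring breaks parity once around the cycle. My first attempt is to use the same $Q_i$ with alternating colors, breaking the alternation at one place: $Q_{n-1}$ and $Q_n$ would both be blue, say. Near that seam, only a bounded number of adjacent or distance-$2$ pairs lose their distinct-color witnesses. I would then add one extra path, a short path (for instance a single edge or a length-$2$ path) of the appropriate color near the seam, chosen to repair exactly those failures. Alternatively, one could shorten one $Q$ and add a path. The verification consists of a local case check around the seam, with small odd $n$ (such as $3,5,7$) handled directly. This gives $n+1$ paths and matches the lower bound.
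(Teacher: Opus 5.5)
Your lower bound is correct and is the paper's endpoint count in slightly different form. Your even construction with the windows $Q_i=e_ie_{i+1}e_{i+2}$ is a valid alternative to the paper's half-cycle paths for even $n\ge 6$. It does genuinely fail at $n=4$: there $e_1,e_3$ are separated only by $Q_4$ and $Q_2$, both blue, so you need something like the paper's paths of length $n/2$.

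\textbf{The gap is the odd upper bound.} The bound $c_2(C_n)\le n+1$ for odd $n$ is only sketched, and the sketched repair cannot work. With all $n$ windows present and $n\ge 5$, the only path containing $e_{j-1}$ but not $e_j$ is $Q_{j-3}$, and the only one containing $e_j$ but not $e_{j-1}$ is $Q_j$. Any added path separating this pair must have $v_j$ as an endpoint. If the alternation breaks once, at $Q_s,Q_{s+1}$ of color $X$, then $Q_{j-3}$ and $Q_j$ share a color for exactly $j=s+1,s+2,s+3$. The shared colors are $X$, the other color, and $X$. Hence you need new paths ending at $v_{s+1}$ and $v_{s+3}$ of the color opposite to $X$, and a new path ending at $v_{s+2}$ of color $X$. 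For your choice ($Q_{n-1},Q_n$ blue) this means red ends at $v_n$ and $v_2$ and a blue end at $v_1$. A single extra path has only one color, so this route needs at least $n+2$ paths, and no local case check around the seam can rescue it.

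So you must alter the windows themselves. Your second idea (shorten one $Q$ and add a path) can be made to work. For instance, replace $Q_{n-1}$ by the blue path $e_{n-1}e_n$, which moves its end from $v_2$ to $v_1$, and add the red path $e_ne_1$, with ends $v_n,v_2$. One can then check the adjacent, distance-$2$ and farther pairs for all odd $n\ge 5$, with $n=3$ done by hand. As written, though, the proposal contains no valid construction for odd $n$, which is exactly the part you identified as the main obstacle.

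For comparison, the paper uses $\frac{n-1}{2}$ red paths of length $\frac{n+1}{2}$ and $\frac{n-1}{2}$ complementary blue paths of length $\frac{n-1}{2}$. It adds the two single-edge paths $e_{\frac{n-1}{2}}$ (blue) and $e_{\frac{n+1}{2}}$ (red), and verifies separation by a short case analysis on the positions of the two edges.
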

\begin{proof}
	We firstly establish the lower bound. Let $\mathcal{F}_n$ be a $2$-RSPS for $C_n$. Consider three consecutive vertices  $u, v, w$  along the cycle. In  $\mathcal{F}_n$, there must be a path containing the edge  $uv$  but not  $vw$, and  another path of the other color containing  $vw$  but not  $uv$. This ensures that each vertex $v$ must be an endpoint of both a red path and a blue path.
	That is, the red paths collectively have at least $n$ endpoints, so there are at least $\lceil{\frac{n}{2}\rceil}$ red paths, and similarly for blue paths. Thus,  $|\mathcal{F}_n|\ge 2\lceil{\frac{n}{2}\rceil}$. 

	For the upper bound, we construct a $2$-RSPS $\mathcal{F}_n$ for $C_n$. Enumerate the edges of $C_n$ clockwise as $e_1,e_2,\dots,e_n$. Here, indices are taken modulo $n$. 
    We distinguish two cases. 
    \begin{itemize}
        \item Suppose that $n$ is even.  For each $1\le i \le \frac{n}{2}$ include in $\mathcal{F}_n$: 
	\begin{enumerate}
		\item A red path that starts at edge $e_i$ and continues clockwise through $e_{i+\frac{n}{2}-1}$.
		\item A blue path that starts at edge $e_{i+\frac{n}{2}}$ and continues clockwise to $e_{i-1}$.
	\end{enumerate} 
	Figure~\ref{fig:cycles} shows the construction of $\mathcal{F}_8$ applied to the cycle $C_8$.

       \begin{figure}[h!]
	\centering
	\includegraphics[width=0.6\textwidth]{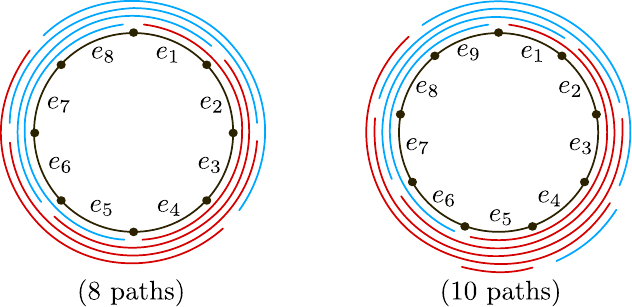}
	\caption{Optimal $2$-RSPS's for $C_8$ and $C_9$. }
	\label{fig:cycles}
\end{figure}
	
	Next, we show that $\mathcal{F}_n$ is a $2$-RSPS. Any pair of edges can be written as $e_a$ and $e_{a+b}$ for some $1 \leq b \leq \frac{n}{2}$. 
	Observe that, in $\mathcal{F}_n$,  there is a path clockwise from $e_{a-\frac{n}{2}+1}$ to $e_{a}$ and a path clockwise from $e_{a+1}$ to $e_{a+\frac{n}{2}}$. 
	Furthermore, these paths are of  different colors and separate $e_a$ from $e_{a+b}$. Thus, $\mathcal{F}_n$ is a $2$-RSPS with  \[|\mathcal{F}_n|=n=2\left\lceil\frac{n}{2}\right\rceil. \]
	\newpage
	\item Suppose that $n$ is odd. For each $1\le i \le \frac{n-1}{2}$, include in $\mathcal{F}_n$: 
	\begin{enumerate}
		\item A red path that starts at edge $e_i$ and continues clockwise through $e_{i+\frac{n-1}{2}}$.
		\item A blue path that starts at edge $e_{i+\frac{n+1}{2}}$ and continues clockwise through $e_{i-1}$.
	\end{enumerate} 
	Additionally, include in $\mathcal{F}_n$: 
	\begin{enumerate}[resume]
		\item A blue path consisting only of the edge $e_{\frac{n-1}{2}}$. 
		\item A red path consisting only of the edge $e_{\frac{n+1}{2}}$. 
	\end{enumerate} 
        See Figure~\ref{fig:cycles} for the construction of $\mathcal{F}_9$ for the cycle $C_9$.\
     
	 We now prove that $\mathcal{F}_n$ is a $2$-RSPS. 
    Consider two edges $e_a$ and $e_b$ in $C_n$ with $a<b$. We distinguish the following cases. 
	\begin{itemize}
		\item Suppose that $1\le  a \le \frac{n-1}{2}$. 
		\begin{itemize}
			\item If $b \ge a+\frac{n+1}{2}$, then in $\mathcal{F}_n$, there is a red path clockwise from $e_a$ to $e_{a+\frac{n-1}{2}}$ and a blue path clockwise from $e_{a+\frac{n+1}{2}}$ to $e_{a-1}$. The red path contains $e_a$ but not $e_b$, while the blue path contains $e_b$ but not $e_a$. Then, these paths are of different colors and separate the edges $e_a$ and $e_b$.
			\item If $a\leq\frac{n-3}{2}$ and $a<b \le a+\frac{n-1}{2}$, then in $\mathcal{F}_n$, there is a red path clockwise from $e_{a+1}$ to $e_{a+\frac{n+1}{2}}$ and a blue path clockwise from $e_{a+\frac{n+3}{2}}$ to $e_a$. The red path contains $e_b$ but not $e_a$, while the blue path contains $e_a$ but not $e_b$. These paths are of different colors and separate the edges $e_a$ and $e_b$.
            \item If $a=\frac{n-1}{2}$ and $\frac{n+3}{2}\leq b\leq n-1$, then in $\mathcal{F}_n$, there is a red path clockwise from $e_1$ to $e_{\frac{n+1}{2}}$ and a blue path clockwise from $e_{\frac{n+3}{2}}$ to $e_n$. The red path contains $e_a$ but not $e_b$, while the blue path contains $e_b$ but not $e_a$. These paths are of different colors and separate the edges $e_a$ and $e_b$.
            \item If $a=\frac{n-1}{2}$ and $b = \frac{n+1}{2}$, then there is a blue path consisting only of the edge $e_a$ and a red path consisting only of the edge $e_b$. These two single-edge paths separate $e_a$ and $e_b$.
		\end{itemize}

        \item Suppose that $\frac{n+1}{2}\leq a< n$. Then in $\mathcal{F}_n$ there is a red path clockwise from $e_{a-\frac{n-1}{2}}$ to $e_a$, and a blue path clockwise from $e_b$ to $e_{b-\frac{n+3}{2}}$. Once again, as $b-\frac{n+3}{2}\leq \frac{n-3}{2}<a$, these two paths of different colors separate $e_a$ and $e_b$.

	\end{itemize}
    We conclude that $\mathcal{F}_n$ is a $2$-RSPS with 
	\[
	|\mathcal{F}_n| = (n-1) + 2 = 2\left\lceil\frac{n}{2}\right\rceil.
	\]
    \end{itemize}
    
\end{proof}

For both paths and cycles, $c_2$ is extremely close to $ssp$. That is, for the class $\mathcal{P}$ of paths and the class $\mathcal{C}$ of cycles, we have $r_2(\mathcal{P})=r_2(\mathcal{C})=1$. However, this is not always true. By Proposition~\ref{cliques} and Proposition \ref{inequality}, $2n-2\leq c_2(K_n)\leq 2n+10$. That is, for the class $\mathcal{K}$ of complete graphs, $r_2(\mathcal{K})=2$. Another counterexample is the class of stars. By Proposition \ref{trees-strong}, a star on $n$ vertices has strong separation number $n-1$. However:
\newpage
\begin{theorem}
\label{thm:stars}
    For the star $S_n$ with $n\ge 4$, we have $c_2(S_n)=2\lfloor\frac{2(n-1)}{3}\rfloor$.
\end{theorem}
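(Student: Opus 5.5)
Throughout, write $m=n-1$ for the number of edges of $S_n$. Every path in a star has at most two edges, so a path system is a multiset of singletons $\{e\}$ and pairs $\{e,f\}$ of edges. For a $2$-RSPS with colors red and blue, let $R(e)$ (resp.\ $B(e)$) be the set of red (resp.\ blue) paths containing $e$. The rainbow condition for a pair $\{e,f\}$ says that one of two things holds. Either $R(e)\not\subseteq R(f)$ and $B(f)\not\subseteq B(e)$, or the same holds with $e$ and $f$ swapped. I would encode each color class as a multigraph $H_R$ (resp.\ $H_B$) on vertex set $E(S_n)$, with pair-paths as edges and singleton paths as loops. The whole proof is then a counting problem about two such multigraphs on $m$ common vertices.

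\emph{Lower bound.} First, within one color, I will show $R$ is injective on edges. If $R(e)=R(f)$, then neither direction can be achieved in red, so the pair fails. Hence $H_R$ has at most one isolated vertex, and it has no component equal to a single non-loop edge (its two ends would have equal $R$). Similarly, parallel edges do not help. Counting vertices per edge of $H_R$ then gives $m\le \lfloor 3r/2\rfloor+1$ with $r=|R|$. This is essentially the weak bound of Proposition~\ref{trees-weak}. To reach $2\lfloor 2m/3\rfloor$ I must exploit the interaction between colors. I will look at pairs $\{e,f\}$ with $R(e)\subsetneq R(f)$, for instance a leaf and the centre of a $P_3$-component, or a vertex and its neighbour in a longer path component. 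Such a pair is forced to be separated in red in the direction ``$f$ not $e$'' and in blue in the direction ``$e$ not $f$''. The same holds for the isolated vertex $e_0$ of $H_R$: it needs $B(e_0)\not\subseteq B(f)$ for every $f$.

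I would assign a weight to each vertex of $H_R$ and of $H_B$, measuring how ``efficiently'' it is covered. I would then show by a discharging argument on the components of $H_R\cup H_B$ that the total number of paths is at least $\frac{4}{3}m$ minus a small constant. The key claim is that a vertex cannot be cheap in both colors: whenever $e$ sits in an efficient red $P_3$ as a leaf, it must be a non-leaf, or a loop, in blue relative to its red partner. Finally I would sharpen the constant to get exactly $2\lfloor 2m/3\rfloor$. For this I expect to use a parity observation: by symmetry the two color classes can be assumed to contribute equally, each at least $\lfloor 2m/3\rfloor$. I would also need a check of the small residues $m\bmod 3$.

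\emph{Upper bound.} I would give an explicit construction with $\lfloor 2m/3\rfloor$ paths of each color. The plan is to partition the edges into triples, with a remainder handled separately. Inside each triple, red and blue each form a $P_3$ in $H_R$, $H_B$ with different centres. The centres are arranged cyclically across the triples, in the spirit of the cycle construction in Theorem~\ref{thm: cycles}, so that comparable pairs in red are reversed in blue. Pairs from different triples are separated automatically, because their $R$-sets and $B$-sets are disjoint. Within a triple, the naive choices fail, as I expect a short case check to show. So the triple gadget must be chosen carefully, or replaced by a gadget on six edges using eight paths, or the structure must link consecutive triples. I would find the gadget by a small search and verify it, treating $n=4,5,6$ and the remainders by hand.

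\emph{Main obstacle.} The crux is the cross-color lower bound. The single-color count only yields about $\frac{2}{3}m$ per color with an additive slack of $1$ per color. Getting the exact value $2\lfloor 2m/3\rfloor$, rather than something like $2\lceil 2(m-1)/3\rceil$, requires showing that this slack cannot be realized in both colors simultaneously. I would try first to prove that the unique possible isolated vertex in $H_R$ and the one in $H_B$ lead to a contradiction, or force an extra path, via the condition $B(e_0)\not\subseteq B(f)$.
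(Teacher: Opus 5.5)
Your overall plan is the paper's: each colour class must weakly separate the edges on its own, and the construction is built from $3$-edge blocks. However, the ``main obstacle'' you single out does not exist, and the lemma you propose to overcome it is false.

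Your single-colour count $m\le\lfloor 3r/2\rfloor+1$ is equivalent to $r\ge\lceil 2(m-1)/3\rceil$. For every integer $m$ we have $\lceil 2(m-1)/3\rceil=\lfloor 2m/3\rfloor$: for $m=3k,3k+1,3k+2$ both sides equal $2k,2k,2k+1$. The injectivity argument applies to red and blue separately, so you already have $c_2(S_n)\ge 2\lfloor 2(n-1)/3\rfloor$. This is exactly the paper's lower bound, namely $wsp(S_n)=\lfloor 2(n-1)/3\rfloor$ combined with Proposition~\ref{inequality} for $k=2$. Your count is a self-contained proof of the half of that $wsp$ value which is needed. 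So drop the discharging on $H_R\cup H_B$ and the sharpening step.

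In particular, do not try to show that an isolated vertex in $H_R$ and one in $H_B$ force an extra path. For $m=4$ the optimal system described below has both ($d$ lies in no red path, $b$ in no blue path) and uses only $4=2\lfloor 8/3\rfloor$ paths. Likewise, ``by symmetry the two colour classes contribute equally'' is not a valid reduction. You do not need it, since each colour separately has at least $\lfloor 2m/3\rfloor$ paths.

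For the upper bound, the natural triple gadget does work, so no cyclic linking of blocks and no six-edge gadget is needed. On edges $a,b,c$ take red paths $\{a,b\},\{b,c\}$ and blue paths $\{a,b\},\{a,c\}$, so the red centre is $b$ and the blue centre is $a$. Then:
\begin{itemize}
\item the pair $\{a,b\}$ is separated by blue $\{a,c\}$ and red $\{b,c\}$;
\item the pair $\{b,c\}$ is separated by red $\{a,b\}$ and blue $\{a,c\}$;
\item the pair $\{a,c\}$ is separated by blue $\{a,b\}$ and red $\{b,c\}$.
\end{itemize}
Every edge of this block lies in paths of both colours. That property, together with disjoint supports, is what separates pairs from different blocks.

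The remainder needs more care than ``handled separately'' suggests when $m\equiv 1\pmod 3$. With $m=3k+1$ the budget is $4k$, which $k$ triples already exhaust, and every edge must lie in some path. So one triple must be merged with the extra edge into a $4$-edge block using $4$ paths, e.g.\ red $\{a,b\},\{b,c\}$ and blue $\{a,d\},\{c,d\}$. This block has one red-only edge $b$ and one blue-only edge $d$. That is harmless because the block is used once and all other edges are bicoloured. For $m\equiv 2\pmod 3$, a red single-edge path and a blue single-edge path on the two leftover edges suffice.

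This is precisely the paper's construction $\mathcal{F}_n=\mathcal{F}_{n-3}\cup\mathcal{F}_4$ with base cases $n=4,5,6$.
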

\begin{proof}
The weak separation number for $S_n$ was established by Falgas-Ravry et al.~\cite{nlogn}, who showed that $wsp(S_n)=\left\lfloor\frac{2(n-1)}{3}\right\rfloor$. Then, by Proposition~\ref{inequality}, we obtain the lower bound $c_2(S_n)\ge 2\left\lfloor\frac{2(n-1)}{3}\right\rfloor$. 

For the upper bound, we construct a $2$-RSPS $\mathcal{F}_n$ for $S_n$. Let $e_1, e_2, \ldots e_{n-1}$ denote the edges of $S_n$. The constructions of $\mathcal{F}_n$ for $n=4,5,6$ are shown in Figure~\ref{fig:stars}. 

\begin{figure}[h!]
	\centering
	\includegraphics[width=0.6\textwidth]{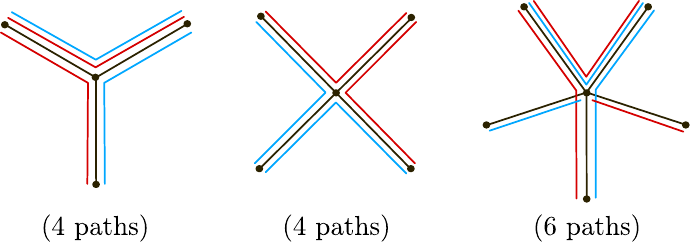}
	\caption{Optimal $2$-RSPS's for the stars $S_4$, $S_5$ and $S_6$. }
	\label{fig:stars}
\end{figure}

Now, for $n\ge 7$, we define $\mathcal{F}_n$ inductively as follows. Include in $\mathcal{F}_n$: 
\begin{enumerate}
    \item the paths from $\mathcal{F}_{n-3}$ applied to the edges $\{e_1,\dots,e_{n-4}\}$, 
    \item the four paths from $\mathcal{F}_{4}$ applied to the edges $\{e_{n-3},e_{n-2},e_{n-1}\}$. 
\end{enumerate}
As $\mathcal{F}_n$ is already defined for $n=4,5,6$ (see Figure~\ref{fig:stars}), the base case of the inductive construction is established. 

We now show that $\mathcal{F}_n$ is a $2$-RSPS. Fix two distinct edges $e_i$ and $e_j$ of $S_n$. We distinguish the following cases. 
\begin{itemize}
    \item If $i,j\in \{1,\dots,n-4\}$, then by the induction hypothesis, the paths from $\mathcal{F}_{n-3}$ applied to the edges $\{e_1,\dots,e_{n-4}\}$ and included in $\mathcal{F}_n$ separate $e_i$ and $e_j$. 
    \item If $i,j\in \{n-3,n-2,n-1\}$, then the four paths from $\mathcal{F}_{4}$ applied to the edges $\{e_{n-3},e_{n-2},e_{n-1}\}$ separate $e_i$ and $e_j$. 
    \item If $i\in \{1,\dots, n-4\}$ and $j\in \{n-3,n-2,n-1\}$, then $e_i$ belongs to a path of some color $c$ that does not include $e_j$, while $e_j$ belongs to both a red path and a blue path, neither of which contains $e_i$. Thus, we may choose the path containing $e_j$ to be of color different from $c$, ensuring that $e_i$ and $e_j$ are separated.
\end{itemize}

Finally, we determine the size of $\mathcal{F}_n$. We proceed by induction on $n$. For $n=4,5,6$, we have $c_2(S_n) = 2 \lfloor{\frac{2n}{3}}\rfloor$. Suppose now that the upper bound holds for all $k < n$. From the construction, we obtain: 
\[|\mathcal{F}_n| = |\mathcal{F}_{n-3}| + 4 \le  2 \left\lfloor{\frac{2(n-4)}{3}}\right\rfloor + 4 
= 2 \left(\left\lfloor{\frac{2(n-4)}{3}}\right\rfloor + 2 \right) = 2 \left\lfloor{\frac{2(n-1)}{3}}\right\rfloor. \]
\end{proof}

\subsection{Spiders}

For all of the classes mentioned in Subsection \ref{subsection}, $c_2$ is suspiciously close to $2\cdot wsp$, the lower bound from Proposition~\ref{inequality}. This is not true in general. For a spider of the form $S_{n,q}$, i.e. with $n$ vertices and $q\geq 3$ legs, if all of its legs are of length at least $2$, then one can construct a weakly separating path system of size $\lceil\frac{n-1}{2}\rceil$ by Proposition \ref{trees-weak}. However,

\begin{proposition}
\label{prop:short-spider}
    Let $S_{n,q}$ be a spider with $q\ge 3$ legs, all of length two, so that $n=2q+1$. Then 
    \[c_2(S_{n,q})= \begin{cases}
        n-1+\Floor{\frac{2q}{3}} & \textrm{if $q\equiv 1\pmod{3}$ or $q=3$}, \vspace{0.2cm} \\
        n-2+\Floor{\frac{2q}{3}} & \textrm{otherwise}.
    \end{cases}
    \]
\end{proposition}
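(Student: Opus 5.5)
The plan is to write the edges of $S_{n,q}$ as $a_i=cv_i$ (inner) and $b_i=v_i\ell_i$ (outer) for $1\le i\le q$. The proof rests on one structural fact. The only path containing $b_i$ but avoiding $a_i$ is the single edge $\{b_i\}$. So every $2$-RSPS contains at least one singleton path on each $b_i$. Moreover, for the pair $\{a_i,b_i\}$ it also contains a path of a different colour that contains $a_i$ and avoids $b_i$: either $\{a_i\}$ or a path $(b_j)a_ja_i$ through the centre.

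Call a leg \emph{monochromatic} if all singletons on $b_i$ share one colour $\sigma(i)$. For two monochromatic legs with $\sigma(i)=\sigma(j)=$ red, separating $b_i$ from $b_j$ forces a blue path through the centre that contains $b_j$ and hence the whole leg $j$, while avoiding leg $i$ (and symmetrically). Consequently the centre-crossing paths, viewed as subsets of the leg set, must behave like a rainbow separating system on a star $S_{q+1}$ whose ``edges'' are the legs. The colour class opposite to $\sigma$ carries the separating duty, and the singletons act as the second colour. This reduces the problem to a weighted variant of Theorem~\ref{thm:stars}.

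For the upper bound I would group the legs into triples. On each triple I would place three red singletons on the $b$'s and lift the four paths of the optimal $2$-RSPS of $S_4$ from Figure~\ref{fig:stars} to paths running through whole legs. After recolouring so that every leg sees a blue path avoiding each other leg, this gives $3+4=7=2\cdot 3+\frac23\cdot 3$ paths per triple. As in the proof of Theorem~\ref{thm:stars}, pairs of edges lying in different triples are separated because every leg has singletons of one colour and full-leg paths of the other. The leftover legs for $q\equiv1,2\pmod 3$ I would handle with small ad hoc gadgets, namely those for $S_5,S_6$ lifted, plus the necessary singleton on $b_i$. This is also where the saving of one path appears for $q\equiv 0,2\pmod 3$ with $q\neq 3$: a single leg can use singletons of both colours, or an inner edge can be covered by a path $a_ia_j$ that serves two legs at once. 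I would verify small cases ($q=3,4,5$) by hand to find the exact gadgets.

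For the lower bound I would count $q$ forced singletons on the $b_i$. I would then show the remaining paths number at least $q+\lfloor 2q/3\rfloor-1$ in general, with the extra $+1$ when $q\equiv1\pmod3$ or $q=3$. The argument would mirror the proof of Proposition~\ref{inequality}. Legs split into red-singleton and blue-singleton (or mixed) classes. Within each class, the centre paths of the opposite colour restricted to the inner edges must weakly separate that class and cover every inner edge. Across classes, each inner edge needs paths of both colours avoiding the corresponding outer edge. Summing the weak-separation bound for stars, $wsp(S_{m+1})=\lfloor 2m/3\rfloor$, over the colour classes, together with the covering requirements for the $a_i$, should give about $q+\lfloor 2q/3\rfloor$. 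The main obstacle is this lower bound: the exact additive constant and its dependence on $q \bmod 3$. Mixed legs (singletons of both colours) and paths whose two ends lie in different colour classes blur the clean decomposition. I would first try an exchange argument showing that an optimal system can be assumed to have all legs monochromatic except at most one, and then finish the residue cases by a discharging count on path endpoints at the $v_i$, as in the lower bound of Theorem~\ref{thm: cycles}.
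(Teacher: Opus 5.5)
\textbf{Upper bound.} Your construction does not work, and its target count is off by one. The identity $3+4=7=2\cdot 3+\frac23\cdot 3$ is false: the right-hand side is $8$. Eight is exactly what each extra triple of legs costs in the formula, since $n$ grows by $6$ and $\lfloor 2q/3\rfloor$ by $2$.

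In your gadget every non-singleton path is lifted to run through whole legs. So no path contains $a_i$ while avoiding $b_i$, and the pair $\{a_i,b_i\}$ is never separated. This is precisely the requirement you isolated in your first paragraph, and no recolouring repairs it. Indeed, a $7$-path system for $S_{7,3}$ would contradict the statement itself, which asserts $c_2(S_{7,3})=8$.

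The paper instead gives explicit systems for $q=3,4,5,6$ (with $8,10,12,15$ paths). It then repeatedly glues on a copy of an $8$-path system for $S_{7,3}$ in which every edge lies in paths of both colours. It is this bicolour coverage, not a singleton/full-leg pattern, that makes the cross-block separation argument work.

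\textbf{Lower bound.} This part is not actually proved, and the proposed reduction has a flaw. A blue path separating $b_j$ from $b_i$ must avoid $b_i$, but it may contain $a_i$ (e.g.\ $a_ia_jb_j$). So the centre paths are not subsets of the leg set, and the star reduction fails as stated. Summing $\lfloor 2r/3\rfloor+\lfloor 2b/3\rfloor$ with unquantified ``covering requirements'' does not yield $q+\lfloor 2q/3\rfloor-1$. The exchange and discharging steps are only hoped for.

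The missing idea is to apply the tree bound $wsp(T)\ge\lceil(2(d_1-1)+d_2)/3\rceil$ of Proposition~\ref{trees-weak} to pruned trees, one for each colour.
\begin{enumerate}
\item Let $r$ and $b$ be the numbers of red and blue singletons on outer edges, so $r+b\ge q$.
\item Delete the leaves $\ell_i$ whose edge $b_i$ carries a red singleton. This yields a tree $T_R$ with $q$ leaves and $q-r$ vertices of degree $2$.
\item Every pair of edges needs a \emph{red} path containing exactly one of them. Hence the remaining red paths, truncated at the deleted leaves, weakly separate $T_R$, so there are at least $q-\frac{r+2}{3}$ of them.
\item The same argument for blue gives at least $q-\frac{b+2}{3}$ blue paths.
\end{enumerate}
Adding everything up gives
\[
c_2\ge 2q+\frac{2(r+b)-4}{3}, \quad\text{hence}\quad c_2\ge n-1+\left\lceil\frac{2q-4}{3}\right\rceil .
\]
Comparing $\lceil (2q-4)/3\rceil$ with $\lfloor 2q/3\rfloor$ modulo $3$ shows this is the claimed value for every $q\neq 3$.

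For $q=3$ a short extra check is needed. If $r+b\ge 4$ the bound already gives $8$. Otherwise $r+b=3$, and up to swapping colours $(r,b)\in\{(3,0),(2,1)\}$. The case $(2,1)$ uses the direct observation that the spider with legs of lengths $2,2,1$ cannot be weakly separated by two paths.
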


\begin{proof} 
    For the upper bound, we construct an explicit $2$-RSPS $\mathcal{F}_{n,q}$ for $S_{n,q}$, following an approach similar to that used for stars in the proof of Theorem~\ref{thm:stars}.

    Let $\ell_1, \dots, \ell_q$ denote the legs of $S_{n,q}$. For $q = 3, 4, 5, 6$, the constructions of $\mathcal{F}_{n,q}$ are shown in Figure~\ref{fig:spiders_base_optimal}. 
    
    \begin{figure}[h!]
    \centering
    \includegraphics[width=0.6\textwidth]{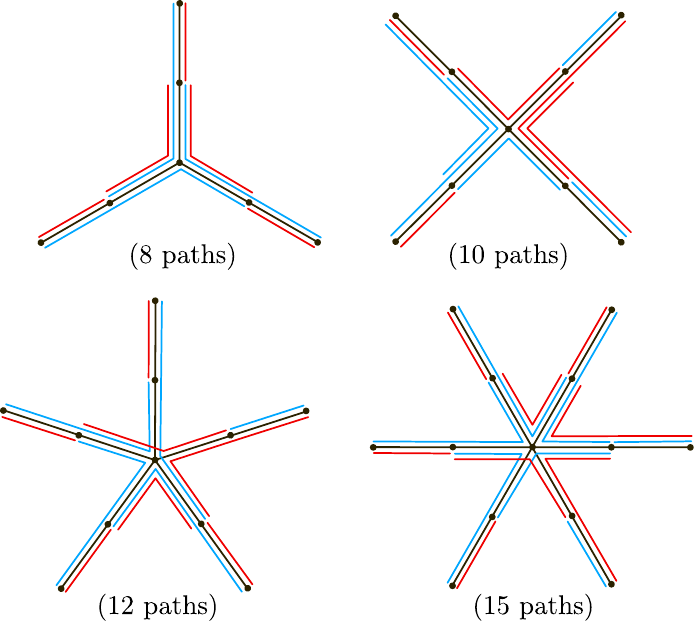}
    \caption{Optimal $2$-RSPS's for the spiders $S_{2q+1,q}$, where $q=3,4,5,6$.}
    \label{fig:spiders_base_optimal}
\end{figure}
    
    For $q \ge 7$, we define $\mathcal{F}_{n,q}$ inductively as follows. Include in $\mathcal{F}_{n,q}$:
\begin{enumerate}
    \item the paths from $\mathcal{F}_{n-6, q-3}$ applied to the legs $\{\ell_1, \dots, \ell_{q-3}\}$;
    \item the paths from $\mathcal{F}_{7,3}$ applied to the legs $\{\ell_{q-2}, \ell_{q-1}, \ell_q\}$.
\end{enumerate}

We now show that $\mathcal{F}_{n,q}$ is a $2$-RSPS. Fix two distinct edges $e$ and $e'$ of $S_{n,q}$, and let $\ell_i$ and $\ell_j$ be the legs containing $e$ and $e'$, respectively. We consider the following cases:
\begin{itemize}
    \item If $i, j \in \{1, \dots, q - 3\}$, the paths from $\mathcal{F}_{n-6, q-3}$ separate $e$ and $e'$.
    \item If $i, j \in \{q - 2, q - 1, q\}$, the paths from $\mathcal{F}_{7,3}$ separate $e$ and $e'$.
    \item If $i \in \{1, \dots, q - 3\}$ and $j \in \{q - 2, q - 1, q\}$, then $e$ belongs to a path of some color $c$ not containing $e'$, and $e'$ belongs to both a red and a blue path, neither of which contains $e$. We may thus choose a path containing $e'$ whose color is different from $c$, ensuring that $e$ and $e'$ are separated.
\end{itemize}

We now bound the size of $\mathcal{F}_{n,q}$ by induction on $q$. For $q = 3, 4, 5, 6$, the result holds by Figure~\ref{fig:spiders_base_optimal}. Assume that $q \ge 7$ and that the bound holds for all $q' < q$. From the construction we have
\[
|\mathcal{F}_{n,q}| = |\mathcal{F}_{n-6, q-3}| + |\mathcal{F}_{7,3}|.
\]
Thus,
\[
|\mathcal{F}_{n,q}| =
\begin{cases}
    n - 7 + \left\lfloor \frac{2(q - 3)}{3} \right\rfloor + 8 = n - 1 + \left\lfloor \frac{2q}{3} \right\rfloor & \text{if } q \equiv 1 \pmod{3}, \smallskip \\ 
    n - 8 + \left\lfloor \frac{2(q - 3)}{3} \right\rfloor + 8 = n - 2 + \left\lfloor \frac{2q}{3} \right\rfloor & \text{if } q \equiv 0,2 \pmod{3}.
\end{cases}
\]

To prove the lower bound, let $\mathcal{F}$ be a minimum $2$-RSPS for $S_{n,q}$. Let $v_1, \dots, v_q$ be the leaves of $S_{n,q}$, and let $e_1, \dots, e_q$ be the edges incident to them, respectively. Let $\mathcal{S}$ be the set of single-edge paths in $\mathcal{F}$ containing only $e_i$ for some $i$. Since each $e_i$ must be separated from the all the other edges in the graph, $|\mathcal{S}| \ge q$.

Let $r$ and $b$ be the number of red and blue paths in $\mathcal{S}$, respectively. Define $I_R$ and $I_B$ as the sets of indices of red and blue paths in $\mathcal{S}$, respectively. Let $T_R$ (resp. $T_B$) be the tree obtained from $S_{n,q}$ by deleting the leaves in $\{v_i : i \in I_R\}$ (resp. $\{v_j : j \in I_B\}$). Note that
\begin{enumerate}[label=$(\roman*)$]
    \item \label{item:T_R} $T_R$ has $q$ leaves and $q - r$ vertices of degree two.
    \item \label{item:T_B} $T_B$ has $q$ leaves and $q - b$ vertices of degree two.
\end{enumerate}

    Let $\mathcal{F}_R$ and $\mathcal{F}_B$ be the families obtained from $\mathcal{F}$ by:
    \begin{itemize}
        \item[$1.$] deleting the $r$ (resp. $b$) single-edge red (resp. blue) paths; and
        \item[$2.$] shortening all other red (resp. blue) paths ending in some $v_i$ for $i\in I_R$ (resp. $i\in I_B$) to the unique neighbor of $v_i$.
    \end{itemize}
    Observe that in the second step, no path is deleted or shortened to a trivial path (i.e. a path with a single vertex), 
    so every path from $\mathcal{F}$ remains in either $\mathcal{F}_R$, $\mathcal{F}_B$, or $\mathcal{S}$. This implies that the non-truncated versions of the paths in $\mathcal{F}_R$ and $\mathcal{F}_B$, together with $\mathcal{S}$, form a partition of $\mathcal{F}$. Furthermore, since $\mathcal{F}$
    was chosen as a $2$-RSPS for $S_{n,q}$ of minimum size, we have 
    \begin{equation}
    \label{eq:partition}
        c_2(S_{n,q})= |\mathcal{F}|= |\mathcal{F}_R|+|\mathcal{F}_B|+|\mathcal{S}|. 
    \end{equation}

    Note that
\begin{equation}
\label{eq:red-weak}
    \text{the red paths in $\mathcal{F}_R$ correspond to a weakly separating path system of $T_R$.}
\end{equation}
Indeed, for each pair of edges $e$ and $e'$ of $T_R$, there must exist (at least) one red path in $\mathcal{F}$ containing exactly one of these edges (because $\mathcal{F}$ is a $2$-RSPS for $S_{n,q}$), and such a path (after possibly being shortened in the second step) is included in $\mathcal{F}_R$. Similarly,
\begin{equation}
\label{eq:blue-weak}
    \text{the blue paths in $\mathcal{F}_B$ correspond to a weakly separating path system of $T_B$.}
\end{equation} 

Using property~\ref{item:T_R} with~\eqref{eq:red-weak} and applying Proposition \ref{trees-weak} to $T_R$, we get
\begin{equation}
\label{eq:weak-red}
    |\mathcal{F}_R| \ge wsp(T_R) \ge \frac{2(q - 1) + q - r}{3} = q - \frac{r + 2}{3}.
\end{equation}
Similarly, combining~\ref{item:T_B} with~\eqref{eq:blue-weak} and applying Proposition \ref{trees-weak} to $T_B$, we obtain
\begin{equation}
\label{eq:weak-blue}
    |\mathcal{F}_B| \ge wsp(T_B) \ge \frac{2(q - 1) + q - b}{3} = q - \frac{b + 2}{3}.
\end{equation}

Combining with~\eqref{eq:partition} we obtain that 
\[
    c_2(S_{n,q}) \ge  q-\frac{r+2}{3}+ q-\frac{b+2}{3}+r+b = 2q+\frac{2(r+b)-4}{3},\]
and given that $c_2(S_{n,q})$ is an integer, we conclude that
    \[c_2(S_{n,q})\ge n-1 + \left\lceil\frac{2q-4}{3}\right\rceil.
\]
We distinguish the following cases. 
    \begin{itemize}
        \item If $q\equiv 0 \pmod 3$ but $q\ne  3$, or if $q\equiv 2\pmod 3$, then $\left\lceil\frac{2q-4}{3}\right\rceil=\left\lfloor \frac{2q}{3}\right\rfloor -1$, so
        \[c_2(S_{n,q})\ge n-1 + \left\lceil\frac{2q-4}{3}\right\rceil= n-2+ \left\lfloor \frac{2q}{3}\right\rfloor. \]
        \item If $q\equiv 1 \pmod 3$, then $\left\lceil\frac{2q-4}{3}\right\rceil=\left\lfloor \frac{2q}{3}\right\rfloor$, so
        \[c_2(S_{n,q})\ge n-1 + \left\lceil\frac{2q-4}{3}\right\rceil= n-1+ \left\lfloor \frac{2q}{3}\right\rfloor. \]

        \item If $q = 3$, we aim to show that $c_2(S_{7,3}) \ge 8$. First, suppose that $r+b\ge 4$. Then, we immediately get
        \[c_2(S_{n,q})\ge 2q+\left\lceil\frac{2(r+b)-4}{3}\right\rceil\ge 8. \]
        Now assume that $r+b= 3$, which implies that all the single-edge paths in $\mathcal{S}$ are distinct. 
        By the symmetry between red and blue, we may assume that $r \in \{2, 3\}$.
        Observe that 
        \begin{itemize}
        \item  $T_R$ has $q$ leaves and $q-r$ vertices of degree two; 
        \item  $T_B$ has $q$ leaves and $q-b$ vertices of degree two. 
    \end{itemize}
    
    Consider first the case $r = 3$ and $b = 0$. Then, from~\eqref{eq:weak-red} and~\eqref{eq:weak-blue}, and using that $|\mathcal{F}_R|$ and $|\mathcal{F}_B|$ must be integers, we obtain $|\mathcal{F}_R| \ge 2$ and $|\mathcal{F}_B| \ge 3$. Therefore, by~\eqref{eq:partition}, we get $c_2(S_{7,3}) = |\mathcal{F}| \ge 8$.

    Now suppose that $r = 2$ and $b = 1$. From~\eqref{eq:weak-red}~and~\eqref{eq:weak-blue}, we have $|\mathcal{F}_R| \ge wsp(T_R)\ge 2$ and 
    $|\mathcal{F}_B| \ge wsp(T_B) \ge 2$. However, it can be easily verified that $T_B$ (which is a spider with three legs of lengths $2$, $2$, and $1$) cannot be weakly separated with only two paths. Hence, $|\mathcal{F}_B| \ge 3$, and it follows from~\eqref{eq:partition} that 
    $c_2(S_{7,3}) = |\mathcal{F}| \ge 8$, as required.
    \end{itemize}
\end{proof}

We now prove a more general result including all spiders whose legs have length at least two. 

\begin{theorem}\label{thm:spiders}
Let $S_{n,q}$ be a spider with $n$ vertices and $q\ge 3$ legs, each of length at least $2$. \\
If $q=3$ or at least two legs have length $2$, then
\[
c_2(S_{n,q})=
\begin{cases}
n-1+\lfloor 2q/3\rfloor & \text{if $q=3$ or $q\equiv 1\pmod 3$,}\\[0.2cm]
n-2+\lfloor 2q/3\rfloor & \text{otherwise.}
\end{cases}
\]
If $q>3$ and at most one leg has length $2$, then
\[
c_2(S_{n,q})=
\begin{cases}
n-1+\lfloor 2q/3\rfloor & \text{if $q\equiv 1\pmod 3$,}\\[0.2cm]
n-2+\lfloor 2q/3\rfloor+\varepsilon & \text{otherwise,}
\end{cases}
\]
where $\varepsilon\in\{0,1\}$.
\end{theorem}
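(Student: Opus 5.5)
We prove the general spider result by reducing to Proposition~\ref{prop:short-spider} for the upper bounds and reusing its counting argument for the lower bounds.

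\textbf{Lower bound.} We follow the proof of Proposition~\ref{prop:short-spider} verbatim. Let $\mathcal{F}$ be a minimum $2$-RSPS of $S_{n,q}$, and let $\mathcal{S}$ be the set of single-edge paths covering the pendant edges $e_1,\dots,e_q$, with $r$ red and $b$ blue. Then $r+b\ge q$, since each pendant edge must be separated from its neighbour on the leg; this requires a path ending at the leaf's neighbour and containing only the pendant edge.

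Now build $T_R$ and $T_B$ by deleting the corresponding leaves. $T_R$ has $q$ leaves and $n-1-q-r$ vertices of degree two, because the number of degree-two vertices of $S_{n,q}$ is $n-1-q$. Proposition~\ref{trees-weak} then gives
\[|\mathcal{F}_R|\ge \frac{2(q-1)+n-1-q-r}{3},\]
and symmetrically for $\mathcal{F}_B$. Summing with $|\mathcal{S}|=r+b$ yields
\[c_2\ge \frac{2(n-1)+2q-4+(r+b)}{3}\cdot\text{(after simplification)},\]
that is, $n-1+\lceil (2q-4)/3\rceil$ for $r+b\ge q$, exactly as before. The residue cases then give $n-1+\lfloor 2q/3\rfloor$ when $q\equiv1\pmod 3$ and $n-2+\lfloor 2q/3\rfloor$ otherwise.

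For $q=3$ we must show $c_2\ge n-1+2$. We redo the short case analysis: either $r+b\ge4$, or $r+b=3$ and then $r\in\{2,3\}$ by symmetry. The key point is that a spider with three legs where one leg has become shorter is not weakly separated at the Proposition~\ref{trees-weak} lower bound; I expect to verify this via the parity and ceiling slack in that bound for general leg lengths.

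\textbf{Upper bound.} We reduce to Proposition~\ref{prop:short-spider} by lengthening legs. The key lemma is that if a leg has length $L\ge2$, we can subdivide one of its interior edges, increasing $n$ by one, while adding exactly one path. We take a $2$-RSPS $\mathcal{F}$ for the shorter spider in which the relevant leg satisfies a local invariant analogous to (i),(ii) in Theorem~\ref{thm: paths}: the edge next to the leaf lies in a single-edge path of some colour $c$, and the next edge lies in a $c$-path avoiding the pendant edge. We then perform the same extension step as in the inductive construction for paths, which adds one path and preserves the invariant.

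Separation of the new edge from edges on other legs needs care. The new pendant edge lies in single-edge paths, and every other edge lies in paths of both colours avoiding it, so (i) must be maintained. Starting from the base constructions of Figure~\ref{fig:spiders_base_optimal} (all legs of length $2$) and extending each leg, we obtain $c_2(S_{n,q})\le c_2(S_{2q+1,q})+(n-2q-1)$. This matches the formula in all cases, since the formula depends on $n$ only through the term $n$.

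\textbf{Main obstacles.} The first obstacle is checking that the base systems satisfy property (i) on every leg, with each edge in a red path and a blue path; if not, we may need one extra path, which is where $\varepsilon$ comes from. The second is the case $q\not\equiv1\pmod 3$ with at most one leg of length $2$. There the optimal base configuration pairs two length-$2$ legs in a special way (the $\mathcal{F}_{7,3}$ blocks of $8$ paths, for instance, against a saving elsewhere), and I expect the extension to fail to preserve the saving when legs are long. So the plan is to prove only the bound $n-1+\lfloor 2q/3\rfloor$ there, giving $\varepsilon\in\{0,1\}$. When at least two legs have length $2$, we keep those two legs unextended inside a suitable block, so that the saving survives and the exact value holds.
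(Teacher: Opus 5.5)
Your lower bound fails as soon as some leg has length at least $3$, and this is the main gap. Even with the correct coefficient $2$ on $r+b$ (you wrote $1$), Proposition~\ref{trees-weak} applied to $T_R$ and $T_B$ only yields
\[
c_2(S_{n,q})\ \ge\ \frac{2(n-1)+2q-4+2(r+b)}{3}\ \ge\ \frac{2(n-1)+4q-4}{3}\ =\ n-1+\frac{2q-4}{3}-\frac{n-2q-1}{3}.
\]
This agrees with the target only when $n=2q+1$. Otherwise it falls short by a third of the number of edges beyond the all-length-$2$ spider; for legs $2,2,10$ you get $12$ instead of $16$. The weak-separation bound charges each degree-two vertex only $\frac13$ per color, so each extra leg edge is charged $\frac23$ in total, whereas it really costs a whole path. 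No ceiling or parity slack can recover a deficit linear in $n$, so your $q=3$ plan fails for the same reason.

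The missing idea is the peeling induction the paper uses. If a leg has length at least $3$, its terminal edge $e$ can be separated from its unique neighbor only by the single-edge path $\{e\}$, and this path separates no pair of edges of $S_{n-1,q}$. Deleting it and truncating the remaining paths gives $c_2(S_{n,q})\ge c_2(S_{n-1,q})+1$. This reduces everything, including $q=3$, to Proposition~\ref{prop:short-spider} at $n=2q+1$.

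On the upper bound, your plan is the paper's: lengthen one leg at a time, add one new path per new edge, and preserve a path-type local invariant. But your claim that extending the systems of Figure~\ref{fig:spiders_base_optimal} gives $c_2(S_{n,q})\le c_2(S_{2q+1,q})+(n-2q-1)$ in all cases is false. Appendix~\ref{lowerbound} exhibits spiders with five and with six legs, all long, for which $c_2(S_{n,q})>n-2+\lfloor 2q/3\rfloor=c_2(S_{2q+1,q})+(n-2q-1)$.

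The reason is that the extension step needs the penultimate edge $e_p$ to lie in paths of \emph{both} colors avoiding the terminal edge. This fails on a length-$2$ leg whose central edge is monochromatic. A $2$-RSPS can have up to two such edges, one of each color, on different legs, and the base configurations of Figure~\ref{fig:spiders_base_optimal} are not guaranteed to avoid them. Your last paragraph senses this, and keeping the monochromatic edges on unextended length-$2$ legs is exactly the paper's relabeling.

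However, for $q>3$ with at most one leg of length $2$ you give no construction:
\begin{itemize}
\item When $q\equiv 1\pmod 3$, the theorem asserts the exact value $n-1+\lfloor 2q/3\rfloor$. No extra path is allowed, so you must exhibit an optimal-size base that is extendable on every leg that has to be lengthened.
\item When $q\not\equiv 1\pmod 3$, you need size $n-1+\lfloor 2q/3\rfloor$. Your ``one extra path'' is not obviously enough, since two monochromatic central edges of different colors may sit on legs that both need lengthening.
\end{itemize}
The paper resolves this with separate base spiders $S_{n',q}$, $n'=2q+1+t$, built from Figure~\ref{fig:spiders_base_modified} together with bicolored $S_{7,3}$ blocks. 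These have size $n'-1+\lfloor 2q/3\rfloor$ and satisfy its invariants (i)--(iii), where (ii) is required only on legs that are long or free of monochromatic edges.
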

\begin{proof}
We first establish the lower bound by induction on $n$, keeping $q$ fixed. The base case corresponds to the spider $S_{2q+1,q}$, for which the desired bound holds by Proposition~\ref{prop:short-spider}. Now, let $S_{n,q}$ be a spider with $n > 2q+1$ vertices, such that each of its $q$ legs has length at least two. Since $n > 2q+1$, at least one of the legs must have length at least three. Fix one such leg, and let $e$ be the edge in this leg that is incident to a leaf $v$. Let $S_{n-1,q}$ denote the spider obtained from $S_{n,q}$ by deleting the leaf $v$. Then $S_{n-1,q}$ has $n - 1$ vertices and $q$ legs, each of length at least two. 

Let $\mathcal{F}$ be a minimum $2$-RSPS for $S_{n,q}$. Observe that $\mathcal{F}$ must contain a single-edge path (of some color) consisting only of the edge $e$, since $e$ must be separated from its unique adjacent edge. Moreover, this path cannot separate any pair of edges in $S_{n-1,q}$, so we require at least $c_2(S_{n-1,q})$ additional paths to separate every pair of edges in $S_{n-1,q}$. Therefore,
\[
c_2(S_{n,q}) \ge c_2(S_{n-1,q}) + 1,
\]
which, by the induction hypothesis, yields the desired bound.

For the upper bound, we construct a $2$-RSPS inductively, following a strategy similar to the one used in Theorem~\ref{thm: paths} for $P_n$. Let $\mathcal{L}=\{\ell_1,\dots,\ell_q\}$ denote the set of legs of $S_{n,q}$. For each leg $\ell\in \mathcal{L}$, we refer to the edge incident to the leaf as the \emph{terminal edge}, denoted by $e_t^{\ell}$, and to the edge adjacent to it (towards the center) as the \emph{penultimate edge}, denoted by $e_p^{\ell}$. 

We prove, by induction on $n$ with $q$ fixed, that $S_{n,q}$ admits a $2$-RSPS of the desired size satisfying the following properties:
\begin{enumerate}[label=$(\roman*)$]
    \item \label{item:spider-1} 
    If monochromatic edges exist, then each of them is incident to the center of the spider; moreover, there are at most two, and if both exist, they have distinct colors and belong to different legs. Denote them by $e_{\operatorname{red}}$ and $e_{\operatorname{blue}}$, respectively.

     \item \label{item:spider-2} For each leg $\ell\in \mathcal{L}$ that either has length at least three or contains no monochromatic edge, 
     $e_p^{\ell}$ belongs to both an $\alpha$-path and a $\beta$-path, neither containing $e_t^\ell$, where $\{\alpha,\beta\}=\{\operatorname{red},\operatorname{blue}\}$.

    \item \label{item:spider-3} For each leg $\ell\in\mathcal{L}$, the terminal edge $e_t^\ell$ is contained in exactly two paths: a single-edge path of one color and a path of the other color that also contains $e_p^\ell$.  
\end{enumerate}

\medskip

\noindent
Let us first establish the base cases. 
\begin{itemize}
    \item If $q=3$ or at least two legs have length two, we take $n=2q+1$ as the base case. We use the $2$-RSPS constructed in the proof of Proposition~\ref{prop:short-spider}. Such construction is as follows. For $S_{7,3}$, $S_{9,4}$, $S_{11,5}$, and $S_{13,6}$, we take the corresponding configurations shown in Figure~\ref{fig:spiders_base_optimal}. We then iteratively add disjoint copies of $S_{7,3}$, colored as in Figure~\ref{fig:spiders_base_optimal}, until obtaining a $2$-RSPS for $S_{2q+1,q}$ using $2q-1+\lfloor{2q/3}\rfloor + r$ paths, where $r=1$ if $q=3$ or $q\equiv 1\pmod 3$, and $r=0$ otherwise. 

    \item Otherwise, if $q>3$ and at most one leg has length two, so that at least three legs have length at least three, let $n'=(2q+1)+t$, where $t=0$ if $q\equiv 0\pmod 3$, $t=2$ if $q\equiv 1\pmod 3$, and $t=1$ if $q\equiv 2\pmod 3$. We take $S_{n',q}$ as the base case, starting from the corresponding configuration in Figure~\ref{fig:spiders_base_modified}. We then add disjoint copies of $S_{7,3}$, colored as in Figure~\ref{fig:spiders_base_optimal}, until obtaining a $2$-RSPS for $S_{n',q}$ using $n'-1+\lfloor 2q/3 \rfloor$ paths.    

\end{itemize}

\begin{figure}[h!]
    \centering
    \includegraphics[width=1\linewidth]{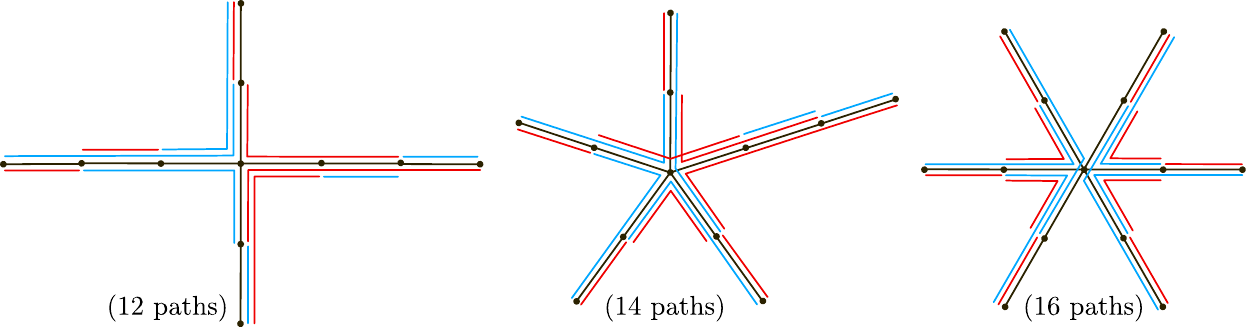}
    \caption{$2$-RSPS's for the spiders $S_{11,4}$, $S_{12,5}$, and $S_{13,6}$, each using $n-1+\lfloor 2q/3\rfloor$ paths. These are used as base configurations in the case $q>3$ when at most one leg has length $2$.}
    \label{fig:spiders_base_modified}
\end{figure}

It is straightforward to verify that the separating systems constructed in the base cases satisfy properties~\ref{item:spider-1}--\ref{item:spider-3}, and have the desired size.

\medskip

Now fix $S_{n,q}$ with $n$ larger than the base case, and choose a leg $\ell$ of length at least three. Let $S_{n-1,q}$ be obtained from $S_{n,q}$ by deleting the terminal edge $e^*$ of $\ell$. Let $\ell'$ denote the leg in $S_{n-1,q}$ corresponding to $\ell$, and let $e_t$ and $e_p$ denote the terminal and penultimate edges of $\ell'$ in $S_{n-1,q}$, so $e_t$ is incident to $e^*$. By the induction hypothesis, there exists a $2$-RSPS $\mathcal{F}_{n-1,q}$ of the correct size (as in the statement, depending on parity mod $3$) and satisfying properties~\ref{item:spider-1}--\ref{item:spider-3}. Moreover, by property~\ref{item:spider-1}, there are at most two legs of $S_{n-1,q}$ containing a monochromatic edge. Hence, after relabeling the legs if necessary, we may assume that:
\begin{itemize}
    \item if $q=3$ or $S_{n,q}$ has at least two legs of length $2$, then the legs containing monochromatic edges in $\mathcal{F}_{n-1,q}$ (if they exist) are legs of length $2$, and in particular $\ell'$ contains no monochromatic edge; and
    \item if $q>3$ and $S_{n,q}$ has at most one leg of length $2$, then either $\ell'$ contains no monochromatic edge in $\mathcal{F}_{n-1,q}$, or $\ell'$ has length at least three. 
\end{itemize}

Therefore, property~\ref{item:spider-2} applies to $\ell'$ in $\mathcal{F}_{n-1,q}$, thus implying that $e_p$ belongs to both an $\alpha$-path and a $\beta$-path, neither containing $e_t$, where $\{\alpha,\beta\}=\{\operatorname{red},\operatorname{blue}\}$.

\medskip
We extend $\mathcal{F}_{n-1,q}$ to a path system $\mathcal{F}_{n,q}$ as follows:
\begin{enumerate}
    \item Extend the single-edge $\alpha$-path containing $e_t$ to include $e^*$.
    \item Extend an $\alpha$-path containing $e_p$ but not $e_t$ to include $e_t$.
    \item Include all the remaining paths from $\mathcal{F}_{n-1,q}$ in $\mathcal{F}_{n,q}$ without modification. 
    \item Add a new single-edge $\beta$-path consisting of $e^*$.
\end{enumerate}

Steps~1 and~4 ensure that $e^*$ is bicolored. Since no  edge in $S_{n-1,q}$ changes its color status, the set of monochromatic edges is the same as in $\mathcal{F}_{n-1,q}$. Hence property~\ref{item:spider-1} is preserved. 
We next verify properties~\ref{item:spider-2} and~\ref{item:spider-3} for the chosen leg in $S_{n,q}$. The new terminal edge is $e^*$, which belongs to a single-edge $\beta$-path by Step~4 and to an $\alpha$-path containing $e_t$ by Step~1 and to no other paths. Thus property~\ref{item:spider-3} holds for $e^*$. The new penultimate edge is $e_t$, which belongs to an $\alpha$-path not containing $e^*$ by Step~2. Also, since $e_t$ is not incident to the center, property~\ref{item:spider-1} implies that it is bicolored; hence it belongs to a $\beta$-path as well, and this path is unchanged and does not contain $e^*$. Therefore property~\ref{item:spider-2} holds for the chosen leg. The remaining legs are unaffected, so properties~\ref{item:spider-2} and~\ref{item:spider-3} are preserved. 

\medskip
It remains to verify that $\mathcal{F}_{n,q}$ is a separating system. Observe that exactly two paths of $\mathcal{F}_{n-1,q}$ were extended, both of color $\alpha$, and one new single-edge $\beta$-path was added. 
\begin{itemize}
    \item First, consider two edges of $S_{n-1,q}$. Any pair distinct from $\{e_t,e_p\}$ remains separated. Indeed, the only modified paths are two paths on the chosen leg, and they are extended only by adding edges, so no separation between edges of $S_{n-1,q}$ is lost except possibly for the pair $\{e_t,e_p\}$, which we check separately. For the pair $\{e_t,e_p\}$, property~\ref{item:spider-2} applied to $\mathcal{F}_{n-1,q}$ yields a $\beta$-path containing $e_p$ but not $e_t$, and property~\ref{item:spider-3} yields a single-edge $\alpha$-path containing $e_t$. The former is unchanged, and the latter is extended only by adding $e^*$, so it still does not contain $e_p$. Thus $e_t$ and $e_p$ remain separated.

    \item Next, the edges $e^*$ and $e_t$ are separated. Indeed, by property~\ref{item:spider-3} applied to $\mathcal{F}_{n,q}$, $e^*$ lies in a single-edge $\beta$-path, which clearly does not contain $e_t$, while by property~\ref{item:spider-2} applied to $\mathcal{F}_{n,q}$, $e_t$ lies in an $\alpha$-path not containing $e^*$, with $\{\alpha,\beta\}=\{\operatorname{red},\operatorname{blue}\}$. Hence $e^*$ and $e_t$ are separated.

    \item Finally, let $e$ be an edge in $S_{n-1,q}$ with $e\neq e_t$. Since $\mathcal{F}_{n-1,q}$ is a separating system, there exists a path in $\mathcal{F}_{n-1,q}$ that contains $e$ but not $e_t$. Such a path is either unchanged in $\mathcal{F}_{n,q}$, or it is extended to include $e_t$. In any case, such a path does not contain $e^*$. This separates $e$ from $e^*$. Conversely, by Steps~1 and~4, the edge $e^*$ is contained in two paths of different colors: a single-edge $\beta$-path consisting only of $e^*$, and an $\alpha$-path containing only $e_t$ and $e^*$. Since $e\neq e_t$,  both of these paths avoid $e$. Hence $e^*$ is also separated from $e$.
\end{itemize}
Therefore, $\mathcal{F}_{n,q}$ separates every pair of edges of $S_{n,q}$, and so it is a $2$-RSPS of $S_{n,q}$.

\medskip
Since each inductive step adds exactly one path, together with the corresponding base case we obtain the claimed bounds, completing the proof.  
\end{proof}

\begin{remark}
    Curiously, there exist spiders with five or six legs for which the lower bound of Theorem \ref{thm:spiders} is not tight. For the sake of completeness, we construct such examples in Appendix \ref{lowerbound}. 
\end{remark}

%


\subsection{Trees}

In this final subsection regarding $2$-RSPS's, we will prove that every tree $T$ on $n$ vertices has $2$-rainbow separation number $c_2(T)\leq\frac{4(n-1)}{3}$.

In order to guarantee sufficient color redundancy in our constructions for trees, we will often require that every edge is covered by at least one red path and one blue path. This motivates the following definition.
We say that a $2$-RSPS (for a graph $G$) satisfies the \textit{bicolor edge-coverage property} if every edge of $G$ belongs to at least one red path and one blue path in the system. We will occasionally abuse terminology and say that $G$ has the bicolor edge-coverage property (if the implied $2$-RSPS is clear from context). 

In the following result, we consider three specific types of trees that will be useful in the construction of a $2$-RSPS for general trees.

\begin{obs}\label{observation}
Let $n\ge 7$. For the star $S_4$, the path $P_n$, and the spider $S_{n,3}$ with three legs of length at least $2$, we have: 
    \begin{enumerate}[label=$(\roman*)$]
    \item \label{item:star} $c_2(S_4)=4 =\frac{4(|S_4|-1)}{3}$, and there exists an optimal $2$-RSPS for $S_4$ satisfying the bicolor edge-coverage property; see the construction in Figure~\ref{fig:stars}.
    
    \item \label{item:path} $c_2(P_{n})=n+1\le \frac{4(|P_n|-1)}{3}$, and there exists an optimal $2$-RSPS for $P_n$ that satisfies the bicolor edge-coverage property; see the construction in the proof of Theorem~\ref{thm: paths}. 
    
    \item \label{item:spider} $c_2(S_{n,3})\le n+1\le \frac{4(|S_{n,3}|-1)}{3}$, and there exists an optimal $2$-RSPS for $S_{n,3}$ satisfying the bicolor edge-coverage property; see the construction given in the proof of Theorem~\ref{thm:spiders}.
    \end{enumerate}
\end{obs}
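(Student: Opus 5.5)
The Observation collects three facts, and I would verify each by pairing an earlier exact formula with an inspection of the corresponding construction. For each family there are two things to check: the numerical inequality against $\frac{4(|T|-1)}{3}$, and the bicolor edge-coverage property of an optimal system.

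\emph{Numerical parts.} For \ref{item:star}, Theorem~\ref{thm:stars} gives $c_2(S_4)=2\lfloor 2\cdot 3/3\rfloor=4$, and $\frac{4(4-1)}{3}=4$. For \ref{item:path}, Theorem~\ref{thm: paths} gives $c_2(P_n)=n+1$ for $n\ge 7$, and
\[
n+1\le \tfrac{4(n-1)}{3}\iff 3n+3\le 4n-4\iff n\ge 7 .
\]
For \ref{item:spider}, since $q=3$, Theorem~\ref{thm:spiders} yields $c_2(S_{n,3})=n-1+\lfloor 2\rfloor=n+1$, and the same inequality applies because $|S_{n,3}|=n\ge 7$. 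The hypothesis $n\ge 7$ is consistent with the legs having length at least $2$.

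\emph{Coverage for the star and the path.} For $S_4$, I would read off Figure~\ref{fig:stars}: there are four paths, each of two colors presumably consisting of two paths covering the three edges. In any case, coverage is forced by the $2$-RSPS condition on a star with three edges. Indeed, to separate $e_i$ from $e_j$ and from $e_k$, the edge $e_i$ needs paths of two colors unless a single path witnesses both pairs. Rather than argue that abstractly, I would simply check the figure. For $P_n$, property~\ref{paths-1} in the proof of Theorem~\ref{thm: paths} is precisely the bicolor edge-coverage property, and it is maintained inductively from the $P_7$ base case. So it holds for the optimal systems of size $n+1$ built there.

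\emph{Coverage for the spider — the main obstacle.} This is the one nontrivial check. The construction in Theorem~\ref{thm:spiders} allows up to two monochromatic edges by property~\ref{item:spider-1}, so coverage is not automatic. I would first verify directly from Figure~\ref{fig:spiders_base_optimal} that the base system for $S_{7,3}$ has no monochromatic edge; this matches its use as an add-on block in Theorem~\ref{thm:stars}-style gluing, where every edge of the added block must lie in both a red and a blue path. The inductive step of Theorem~\ref{thm:spiders} preserves the set of monochromatic edges. It also makes the new terminal edge $e^*$ bicolored, via the extended $\alpha$-path and the new single-edge $\beta$-path. Hence every system produced for $S_{n,3}$ satisfies the bicolor edge-coverage property. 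If the $S_{7,3}$ figure turned out to have a monochromatic central edge, I would instead modify it locally by extending a path of the missing color through that central edge, keeping the count at $8$ and rechecking separation. I expect this only to be needed as a fallback.
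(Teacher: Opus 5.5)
Your proposal is correct and follows the paper's route: the numerical claims are the exact values from Theorems~\ref{thm:stars}, \ref{thm: paths} and \ref{thm:spiders} (with $q=3$). Bicolor coverage is read off the cited constructions: property~\ref{paths-1} for paths, and for $S_{n,3}$ the bicolored $S_{7,3}$ block together with the fact that the inductive step of Theorem~\ref{thm:spiders} preserves the set of monochromatic edges (the gluing step of Proposition~\ref{prop:short-spider} already requires that block to be bicolored, so your fallback is unnecessary). One correction: your aside that coverage is forced for any $2$-RSPS of $S_4$ is false, since red $\{e_1\},\{e_1,e_2\}$ and blue $\{e_2,e_3\},\{e_3\}$ form a valid $2$-RSPS of size $4$ in which $e_1$ and $e_3$ are monochromatic, so the check of Figure~\ref{fig:stars} is genuinely needed.
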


We now introduce a notion on spiders that will be used to establish a tight upper bound for $c_2(T)$, where $T$ is a tree.  
Recall that both stars and paths are examples of spiders. 
Given a tree $T$ and a spider $S$ contained in $T$, we say that $S$ is a \textit{bare spider}  if the following conditions hold:
\begin{itemize}
    \item the subgraph of $T$ induced by $V(T)\setminus V(S)$ is connected;  
    \item either $S=T$, or $T$ is not a spider and the vertex $h$ that separates $S$ from the rest of $T$ is the unique vertex of $S$ with degree at least $3$ in $T$. 
\end{itemize} 
We refer to $h$ as the \textit{spider head}, and to its unique neighbor $v$ outside of $S$ (if it exists) as the \textit{attachment vertex} of the bare spider. The edge $\{h,v\}$ is the \emph{hat} of $S$. We define the \textit{extended bare spider} to be $S+v$. When $S=T$ and $T$ is not a path, we define the spider head as the unique vertex of degree at least $3$, whereas if $T$ is a path, then every vertex can be considered a spider head.

 It is easy to see that the bare spiders of a tree are pairwise vertex-disjoint. Another simple observation is the following: 

\begin{equation}
    \label{eq:two_bare_spiders}
    \textrm{if $T$ is not a spider itself, then it contains at least two bare spiders.}
\end{equation}

Indeed, this can be proved by induction. Let $T$ be a non-spider and delete from it a leaf $v$. If $T-v$ is a spider and $v$ is adjacent to a spider head or leaf of $T-v$, then $T$ is a spider, giving a contradiction. If $T-v$ is a spider and $v$ is adjacent to some other vertex of $T-v$, then $T$ contains exactly two bare spiders. If $T-v$ is not a spider, then it contains two bare spiders by induction, say $S_1$ and $S_2$. Both of these remain such in $T$ (possibly with an extra leg if $v$ is adjacent to the spider head of either $S_1$ or $S_2$) unless $v$ is adjacent to an internal non-head vertex of, say, $S_2$. But then the neighbor $u$ of $v$ is the head of a bare spider $S_3$ with two legs, so $S_1$ and $S_3$ are two bare spiders of $T$.

We now establish an upper bound on $c_2(T)$ for any tree $T$. 
\begin{theorem}\label{tree2}
    Let $T$ be a tree with $n\ge 2$ vertices. Then,  
    \[c_2(T)\le \frac{4(n-1)}{3}. \]
\end{theorem}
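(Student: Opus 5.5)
We prove the bound by strong induction on $n$, in fact proving the stronger statement that every tree $T$ with $n\ge 2$ vertices has a $2$-RSPS of size at most $\frac{4(n-1)}{3}$ satisfying the bicolor edge-coverage property. The bicolor property is what makes gluing work. Suppose $T=T_1\cup T_2$ with $E(T_1)$ and $E(T_2)$ disjoint, and both pieces carry bicolor-covering $2$-RSPS's. Then the union of the two systems separates all of $E(T)$. Pairs inside one piece are handled by that piece's system. For $e\in T_1$ and $f\in T_2$, take any path of $T_1$'s system containing $e$ (it avoids $f$). Then pick a path of $T_2$'s system containing $f$ of the other color, which exists by bicolor coverage. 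This is exactly the argument used for stars and spiders in Theorem~\ref{thm:stars} and Proposition~\ref{prop:short-spider}. So it suffices to decompose $T$ edge-disjointly into pieces, each admitting a bicolor-covering $2$-RSPS with at most $\frac43$ paths per edge.

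\textbf{Base cases.} Small trees and trees that are themselves spiders are checked directly.
\begin{itemize}
\item Paths are handled by Observation~\ref{observation}\ref{item:path} for $n\ge 7$ and by the small constructions in Figure~\ref{fig:paths-upper}. One must verify the bicolor property and the $\frac43$ ratio; the single edge $P_2$ needs $2$ paths, exceeding $\frac43$, so $n$ must be small enough to absorb this.
\item Spiders are handled by splitting into stars $S_4$ (ratio exactly $\frac43$) and the constructions of Theorem~\ref{thm:spiders} and Observation~\ref{observation}\ref{item:spider}.
\end{itemize}

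\textbf{Inductive step.} If $T$ is not a spider, by \eqref{eq:two_bare_spiders} it has a bare spider $S$ with head $h$, hat $hv$ and attachment vertex $v$. We remove a suitable piece $X$ (a subtree of the extended bare spider, possibly together with the hat) such that:
\begin{itemize}
\item $T-E(X)$, after deleting isolated vertices, is a tree $T'$ on fewer vertices;
\item $X$ has a bicolor-covering $2$-RSPS of size at most $\frac43|E(X)|$.
\end{itemize}
Then $c_2(T)\le \frac43(|E(T')|+|E(X)|)=\frac{4(n-1)}{3}$.

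The choice of $X$ splits by cases on the legs of $S$:
\begin{itemize}
\item If $h$ has at least three legs of $S$, take three of them together with their stems. This is a spider with three legs, handled by Observation~\ref{observation}\ref{item:spider}, or by $S_4$ when all three legs have length $1$. A long leg can be split further, giving paths of length at least $6$ or the spider pieces.
\item If all legs must stay attached, use the extended bare spider $S+v$ as $X$. The hat contributes the extra edge that makes the count work; for example, a leg of length $1$ plus the hat forms a $P_3$-like piece joined to the rest.
\end{itemize}

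\textbf{Main obstacle.} The difficulty is the short pieces. A single edge costs $2$ paths and $P_3$ or $P_4$ costs ratio above $\frac43$ ($c_2(P_4)=4$ for $3$ edges is fine, but $c_2(P_3)=2$ for $2$ edges is not bicolor-coverable cheaply). So the case analysis must never leave a remaining tree or piece that is a single edge or a short path.

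Our first attempt is to strengthen the induction so that paths of length $\ge 3$ and stars $S_4$ are the only atomic pieces. Short legs would be handled by merging them with the hat into an $S_4$ centered at $h$ or at $v$. The leftover $T'$ must also remain a valid instance: we would check that it is not $P_2$ or $P_3$, and if it is, absorb it into $X$ to form a spider with three legs. If bicolor coverage of the exact $\frac43$ constructions fails for some small piece, we would instead prove the weaker invariant, where every edge is bicolored except possibly edges at a designated vertex, as in property \ref{item:spider-1} of Theorem~\ref{thm:spiders}. We would then glue only at vertices where the invariant is preserved.
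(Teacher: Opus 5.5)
Your gluing lemma is correct, but the proof has a genuine gap exactly where you place the ``main obstacle'', and the strengthened statement you induct on is false. Bicolor coverage within the $\frac43$ budget already fails for $P_2,\dots,P_5$. For instance, in $P_4$ both end edges need private single-edge paths, and one checks that two further paths cannot give every edge both colors while separating all pairs; so $5>4$ paths are needed (similarly $P_5$ needs more than $5$). Hence your remark that $P_4$ is ``fine'' is wrong, and so is your plan to use paths with at least three edges as atomic pieces; only $P_n$ with $n\ge 7$ works. Since the leftover $T'$ can be such a small tree, the induction is not well founded. The paper never asks this of the last piece. It requires bicolor coverage only for non-final pieces, and it tracks how many monochromatic edges, and of which colors, the final piece creates.

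More seriously, your inductive step has no valid choice of $X$ when the bare spider has legs $(2,1)$; the paper's bad spiders also include $(3,1)$, $(2,2)$ and $(2,1,1)$. In the $(2,1)$ case $S+v$ is the spider with legs $2,1,1$, and it has no bicolor-covering $2$-RSPS with $\lfloor \tfrac{16}{3}\rfloor=5$ paths:
\begin{itemize}
\item The terminal edge $t$ needs a single-edge path, say red. A second one is impossible, since then three paths would have to separate the $S_4$ at $h$, contradicting $c_2(S_4)=4$.
\item So the other four paths restrict to a bicolor-covering $2$-RSPS of that $S_4$. This forces the form $\{x,y\}$ in both colors, plus $\{y,z\}$ and $\{x,z\}$ in different colors.
\item The edge $hb$ needs a blue path through $t$ and a blue path avoiding $t$, so it lies in two blue star paths. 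Whichever of them you extend to $t$, the other star edge on it can be separated from $t$ only by red paths on both sides.
\end{itemize}
Consequently, in the $8$-vertex tree formed by two such spiders with adjacent heads, every admissible $X$ in your scheme is a single edge, $P_3$, $P_4$, or this extended spider. None of these fits the budget with bicolor coverage, so the step cannot be carried out.

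The missing idea is the paper's pairing of two bad spiders. They are separated jointly using \emph{bridging paths}, which run along the path joining the two heads through edges that belong to other pieces and are not charged to the pair. This breaks your edge-disjoint gluing lemma. It forces extra bookkeeping of edges that become monochromatic once the bridging paths are discarded (the $(2,1)+(2,1)$ pair). It also requires an added blue single-edge path for the exceptional final pieces, coupled with such a pair, as verified in the paper's table.
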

\begin{proof}
We construct a $2$-RSPS for $T$ using at most $\frac{4(n - 1)}{3}$ paths.
We proceed iteratively, constructing such a system for a bare spider (or extended bare spider) of $T$, removing said spider from $T$, and continuing this process in the remainder tree. To manage the process, we initialize $T' := T$, and we update $T'$ at each step by removing the edges already covered by the constructed paths.

At each step, we select a bare spider $S$ of $T'$ with spider head $h$. If $S\ne T$, let $v$ denote the attachment vertex of $S$. Our plan is to construct a $2$-RSPS $\mathcal{F}_S$ (or $\mathcal{F}_{S+v}$) for $S$ (or for its extension $S+v$), ensuring that $|\mathcal{F}_S| \le \frac{4\cdot e(S)}{3}$ (or $|\mathcal{F}_{S+v}|\le \frac{4\cdot e(S+v)}{3}$) and that the bicolor edge-coverage property is satisfied. This will always be possible, except in the special case of ``bad spiders", as we shall see below. After constructing the corresponding path system, we will remove the internal edges of $S$ (or $S+v$) from $T’$ and repeat the process on the updated tree. By the first property of bare spiders, the remaining subgraph will remain connected after each deletion, thus guaranteeing that the iterative process continues. We will ensure that each $2$-RSPS that we construct for an (extended) bare spider satisfies the bicolor edge-coverage property, with a possible exception at the last step, as we shall explain. 

\paragraph{Construction of a $2$-RSPS $\mathcal{F}_S$ for a fixed bare spider $S$ (in $T'$):}
We initialize $S' := S$, and iteratively remove edges from $S'$ as we construct the path system. Set $\mathcal{F}_S := \emptyset$.

We first apply the following reductions to $S'$, removing the involved edges at each step:

\begin{enumerate}
    \item While there is a set of three leaves adjacent to $h$, we use the construction from Observation~\ref{observation}~\ref{item:star} to build a $2$-RSPS of four paths satisfying the bicolor edge-coverage property. We add these paths to $\mathcal{F}_S$ and remove the three edges from $S'$ and $T'$. 
    
    \item While there is a subspider $S_{n',3}$ in $S'$ with $3$ legs, all of length at least two and terminating in leaves, we apply Observation~\ref{observation}~\ref{item:spider} to construct a $2$-RSPS of size at most $\frac{4\cdot e(S_{n',3})}{3}$ that satisfies the bicolor edge-coverage property. We add the paths to $\mathcal{F}_S$ and we remove the edges of this subspider from $S'$ and $T'$.
    
    \item While there is a path $P_{n'}$ in $S'$ with $n' \ge 7$ vertices, we apply Observation~\ref{observation}~\ref{item:path} to construct a $2$-RSPS of size at most $\frac{4\cdot e(P_{n'})}{3}$ that satisfies the bicolor edge-coverage property. The vertex $h$ may appear as an endpoint or as an internal vertex of $P_{n'}$. We add the constructed paths to $\mathcal{F}_S$ and remove the edges of this path from $S'$ and $T'$. 
\end{enumerate}

After these reductions, one of the following occurs:
\begin{enumerate}[label=$(\alph*)$]
    \item All edges of $S$ have been covered, in which case we proceed to another bare spider, or
    \item   A residual spider $S'$ remains, satisfying:
    \begin{itemize}
        \item $h$ is adjacent to at most two leaves, 
        \item at most two legs have length $2$ or more,
        \item each leg has length at most $5$,
        \item the sum of the lengths of any two legs is at most $5$. 
    \end{itemize}
\end{enumerate} 
Suppose that the latter occurs. Let $L_{S'}$ denote the vector of leg lengths of $S'$, sorted in non-increasing order. Then: 
\begin{align*}
L_{S'} \in \{ &(5),\ (4,1,1),\ (4,1),\ (4),\ (3,2,1,1),\ (3,2,1),\ (3,2),\ (3,1,1),\ \\
              &(3,1),\ (3),\ (2,2,1,1),\ (2,2,1),\ (2,2),\ (2,1,1),\ (2,1),\ (2),\ (1,1),\ (1) \}.
\end{align*}

We now examine each case:

        \begin{figure}[h!]
    \centering
    \includegraphics[width=0.9\textwidth]{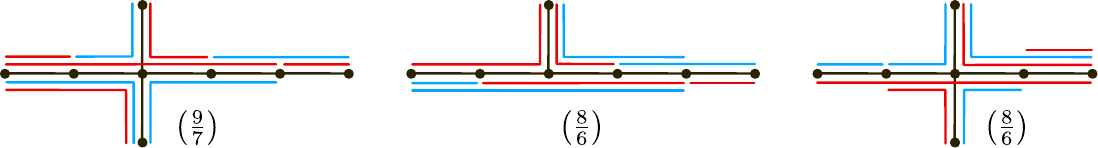}
    \caption{$2$-RSPS's for spiders whose legs have lengths $(3,2,1,1)$, $(3,2,1)$ or $(2,2,1,1)$. }
    \label{fig:tree_1}
\end{figure}

    \begin{figure}[h!] 
    \centering
    \includegraphics[width=1\textwidth]{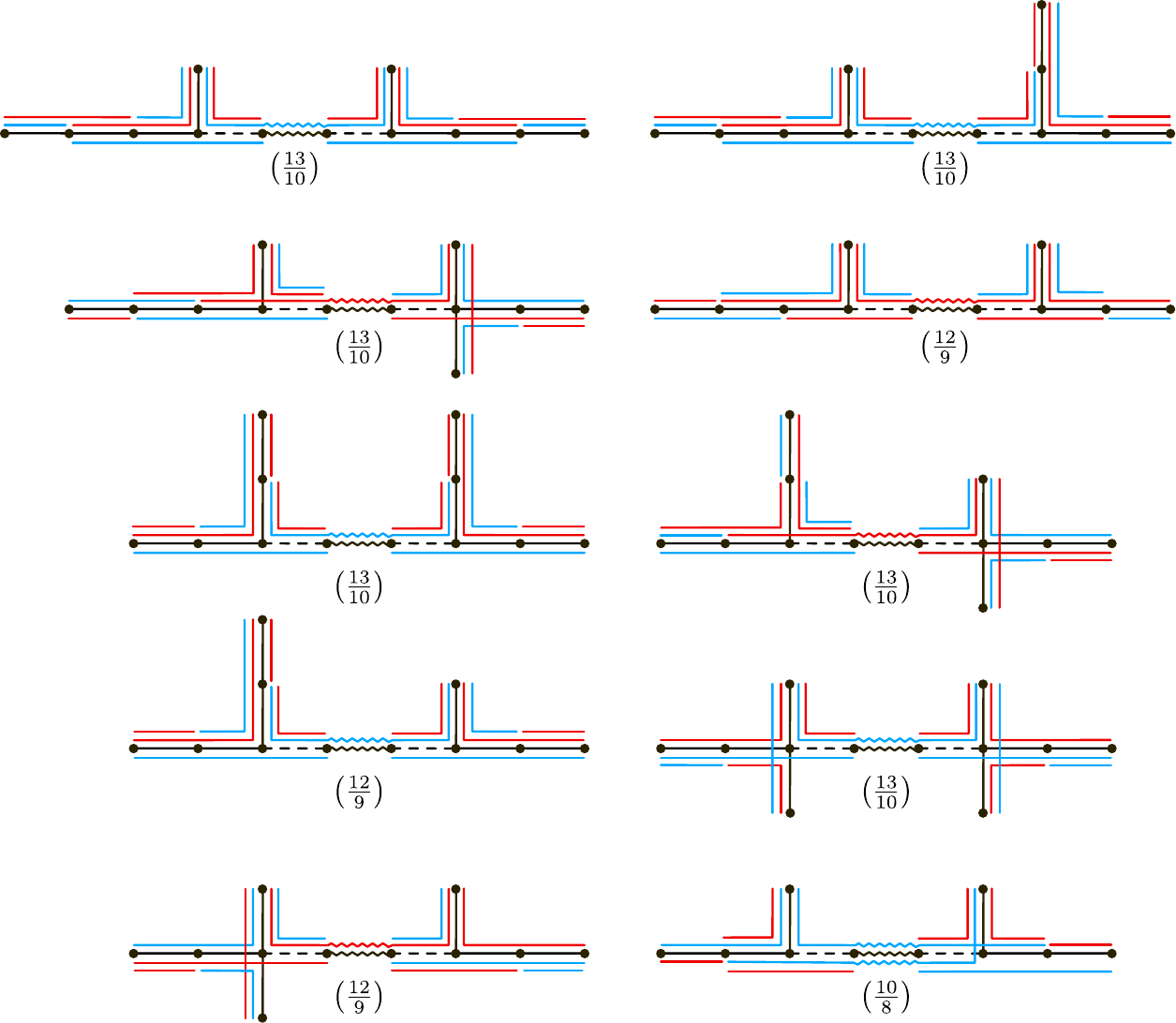}
    \caption{$2$-RSPS's for pairs of bad spiders, where each bad spider is of the form $S'+v$ with $L_{S'} \in \{(3,1), (2,2), (2,1,1), (2,1)\}$. The dashed edges represent the hat $hv$ contained in each bad spider, while the zigzag edge represents the rest of the path in $T$ that connects the two spider heads.}
    \label{fig:bad_spiders}
\end{figure}

      \begin{figure}[h!]
    \centering
    \includegraphics[width=0.9\textwidth]{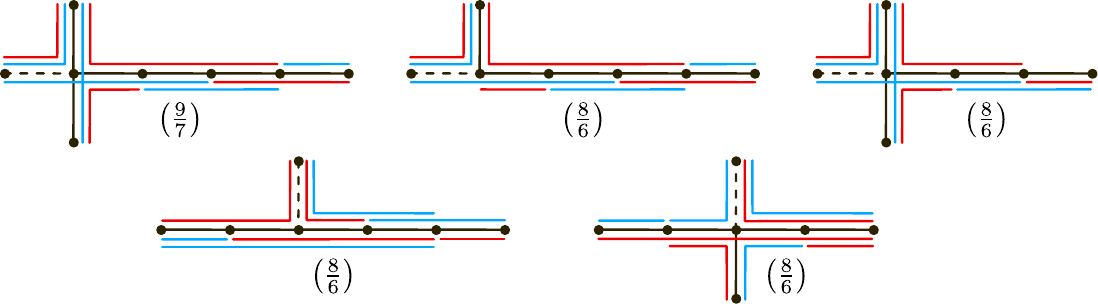}
    \caption{$2$-RSPS's for bare spiders whose legs have lengths $(4,1,1)$, $(4,1)$, $(3,2)$, $(3,1,1)$ or $(2,2,1)$. In each case, we consider the extended bare spider $S^*$, where the hat $hv$ is drawn with a dashed line.}
    \label{fig:tree_2}
\end{figure}

\begin{itemize}

     \item If $S^{'} \ne T'$ and $L_{S'} \in \{(5), (4), (3), (2), (1)\}$, we defer the treatment of $S'$ to a later step, where it will be part of another bare spider.
     
    \item If $L_{S'} \in \{(3,2,1,1), (3,2,1), (2,2,1,1)\}$, we use the constructions shown in Figure~\ref{fig:tree_1}, which satisfy bicolor edge-coverage.

    \item If $S^{'}\ne T'$ and $L_{S'} \in \{(4,1,1), (4,1), (3,2), (3,1,1), (2,2,1), (1,1)\}$, we consider the extended spider $S^*:=S'+v$ and we apply the corresponding construction shown in Figure~\ref{fig:tree_2} (or, for $(1,1)$, Figure~\ref{fig:stars}), which satisfies bicolor edge-coverage.

      \item If $S^{'}\ne T'$ and $L_{S'} \in \{(3,1), (2,2), (2,1,1), (2,1)\}$, we refer to the extended spider $S^*:=S'+v$ as a \textit{bad spider}. We pair bad spiders and we apply the corresponding construction shown in Figure~\ref{fig:bad_spiders} to each pair, or shown in Figure~\ref{fig:shared_hat_bad_spiders} if the hat is shared. We then remove from $T'$ the two paired bad spiders (but not the rest of the connecting path), and we proceed to the next step. Bicolor edge-coverage is satisfied by all of our constructions from Figure \ref{fig:bad_spiders}. \textbf{As for our constructions from Figure \ref{fig:shared_hat_bad_spiders}, each violates the bicolor edge-coverage property at most once, except when the leg vectors are $(2,2)$ and $(2,1)$ or $(2,1,1)$ and $(2,1)$, in which case it is violated twice in distinct colors.} By (\ref{eq:two_bare_spiders}), we can always either choose $S$ so that $S^*$ is not a bad spider, or select two bad spiders to pair. It is important to note that two bad spiders $S_1^*$ and $S_2^*$ can only share a hat if together (and with their hat) they are the entirety of $T'$; indeed, if $T'\setminus (S_1^*\cup S_2^*)$, is not empty, then it is attached to one of the two spider heads, so deleting the corresponding bare spider disconnects $T'$, a contradiction.  
    
     \item Finally, it may be that $S' = T'$ and $L_{S'}\notin \{(3,2,1,1),(3,2,1),(2,2,1,1)\}$. This case, if encountered, is unique, as it corresponds to the last step of the decomposition. If $S'$ is a path, then we apply the construction shown in Figure~\ref{fig:paths-upper}, except when $S'$ is a single edge, in which case we simply take a single-edge path (of any color) covering it. \textbf{In this case, the bicolor edge-coverage property fails at most once, except when $S'\in\{P_3, P_5, P_6\}$, in which case it is violated twice in distinct colors; see Figure~\ref{fig:paths-upper}.} Otherwise, we have $L_{S'}\in \{(4,1,1),(3,1,1),(2,2,1),(2,1,1)\}$. In that case, we construct four paths for the three edges incident to the spider head of $S'$, as in~\ref{item:star}. The remaining edges form either a path of length at most $3$, or two disjoint edges. In the former case, we construct a $2$-RSPS for the path as in the previous case; in the latter, we construct two single-edge paths of different colors covering the two disjoint edges. See Figure \ref{fig:small_bad_spiders}. \textbf{The bicolor edge-coverage property is violated at most once in the cases $(4,1,1)$ and $(2,1,1)$, but it is violated twice in distinct colors in the cases $(3,1,1)$ (due to the ensuing $P_3$) and $(2,2,1)$ (due to the ensuing two disjoint edges).}
     \end{itemize}

     In each step, we obtain a $2$-RSPS of size at most $\frac{4}{3}$ the number of deleted edges that satisfies the bicolor edge-coverage property (with the noted exceptions). That is, each step contributes at most $\frac{4m}{3}$ paths, where $m$ is the number of edges involved. So, the total number of paths in the union $\mathcal{F}$ of these $2$-RSPS's is at most $\frac{4(n - 1)}{3}$, as desired.  

    To complete the proof, there is a final detail that we need to verify in order to ensure that $\mathcal{F}$ is itself a $2$-RSPS. Suppose that, at some step, we pair two bad spiders $S_1^*$ and $S_2^*$ that have distinct hats connected through a path $P$ in $T'\setminus(S_1^*\cup S_2^*)$. If $e\in E(T'\setminus (S_1^*\cup S_2^*))$, then we cannot use the paths of Figure \ref{fig:bad_spiders} that intersect $P$ (let us call those \emph{bridging paths}) to separate $E(S_1^*\cup S_2^*)$ from $e$.

    If our $2$-RSPS for $S_1^*\cup S_2^*$ does not have any monochromatic edges even with the bridging paths removed, then again it is straightforward to separate $E(S_1^*\cup S_2^*)$ from $E(S^\dagger)$. The reader may examine all the pictures in Figure~\ref{fig:bad_spiders} to see that, indeed, in each case, even with the bridging paths removed, there are no monochromatic edges, with just one exception: the case $L_{S_1}=L_{S_2}=(2,1)$ (the relevant picture is at the bottom right corner of Figure~\ref{fig:bad_spiders}). In this case, when the bridging paths are removed, we get exactly two monochromatic edges, both in red (of course, these edges are separated from each other in the original construction with the help of one of the bridging paths). So, this issue only affects pairs of bad spiders with $L_{S_1}=L_{S_2}=(2,1)$.
    
    As long as the (extended) bare spider (or pair of connected bad spiders) of the decomposition that contains $e$, say $S^\dagger$, satisfies the bicolor edge-coverage property, or contains only one monochromatic edge in blue, we can separate each edge $e'\in E(S_1^*\cup S_2^*)$ from $e$ with a non-bridging path, and separate $e$ from $e'$ with a path of the opposite color contained in the $2$-RSPS of $S^\dagger$. As we have already seen (see the passages in \textbf{bold}), the only case in which bicolor edge-coverage can be violated in $S^\dagger$ is if $S^\dagger=T'$. Specifically, in that case, $S^\dagger$ violates the bicolor edge-coverage property at most once (in blue) unless it is one of $P_3$, $P_5$, $P_6$, or has $L_{S^\dagger}=(3,1,1)$ or $(2,2,1)$, or is the union of two bad spiders with shared hat and leg vectors $(2,2)$ and $(2,1)$ or $(2,1,1)$ and $(2,1)$. In all of these cases, $S^\dagger$ violates the bicolor edge-coverage property twice, once in each color. So, for the rest of the proof, we assume that $S^\dagger$ is the final spider of the decomposition and that it has one of the above-listed structures.

    \begin{figure}[h!] 
    \centering
    \includegraphics[width=0.8\textwidth]{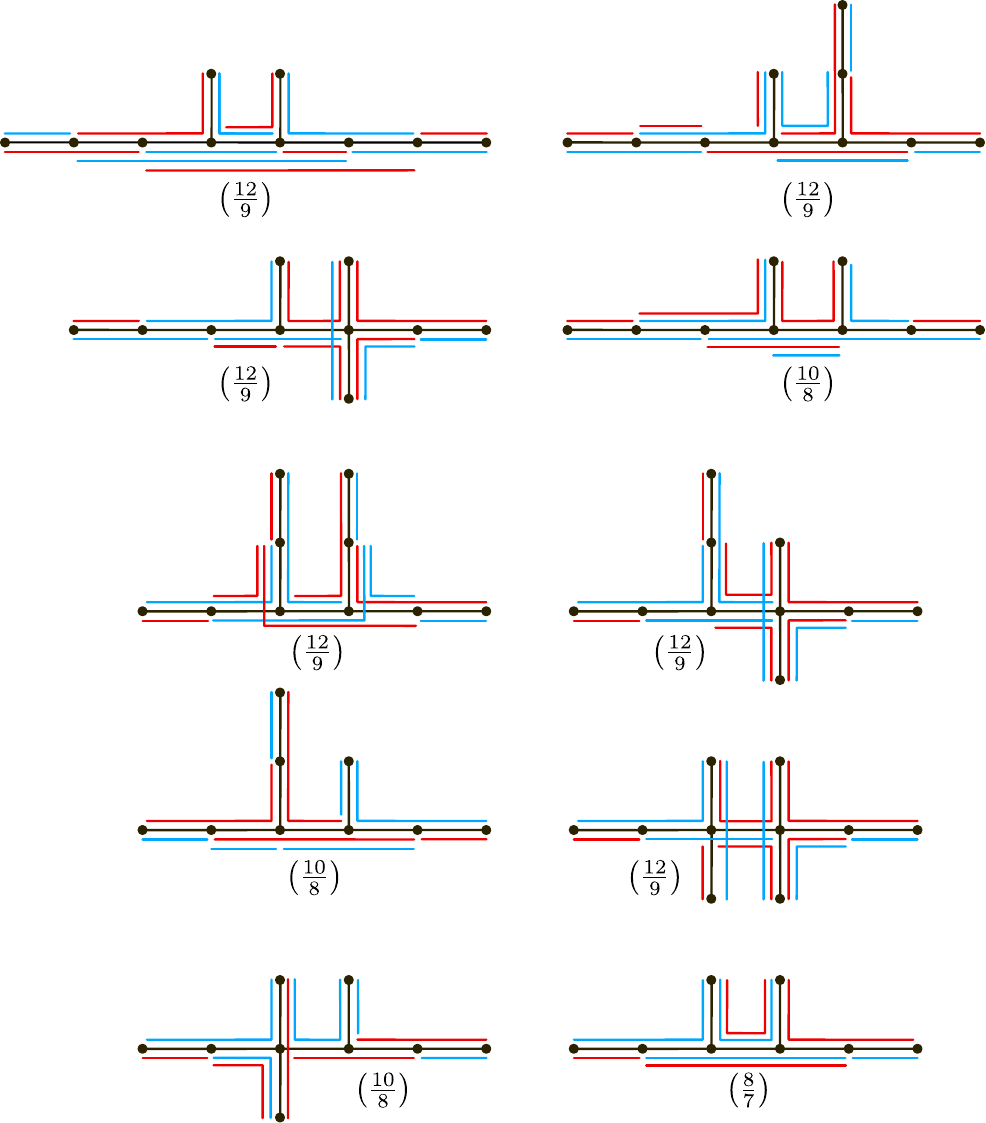}
    \caption{$2$-RSPS's for pairs of bad spiders sharing the same hat, where each bad spider $S'$ satisfies $L_{S'} \in \{(3,1),(2,2),(2,1,1),(2,1)\}$.} 
    \label{fig:shared_hat_bad_spiders}
\end{figure}

\begin{figure}[h!] 
    \centering
    \includegraphics[width=0.9\textwidth]{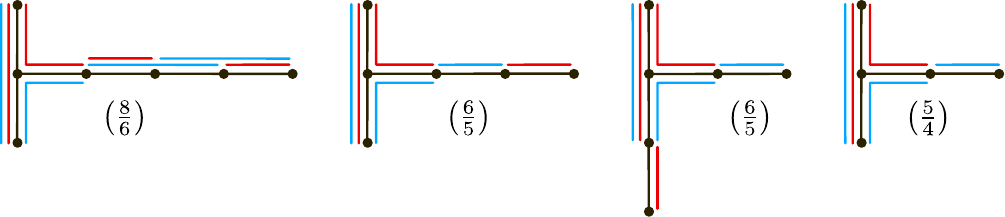}
    \caption{$2$-RSPS's for the exceptional final bare spiders.} 
    \label{fig:small_bad_spiders}
\end{figure}
    
     To resolve this problem, we simply need to add one blue single-edge path to the $2$-RSPS of $S^\dagger$ in order to ensure that it yields at most one monochromatic (blue) edge. We then couple $S^\dagger$ with a pair of bad spiders $S_1^*$ and $S_2^*$ such that $L_{S_1}=L_{S_2}=(2,1)$, and show that we have used at most $\frac{4\cdot e(S_1^*\cup S_2^*\cup S^\dagger)}{3}$ paths to separate $S_1^*\cup S_2^*\cup S^\dagger$. We conclude with a table that demonstrates this.

\begin{table}[h!]
\begin{center}
\begin{tabular}{ c | c | c }
$S^\dagger$ (or $L_{S^\dagger}$) & $e(S_1^*\cup S_2^*\cup S^\dagger)$ & size of $2$-RSPS for $S_1^*\cup S_2^*\cup S^\dagger$\\
\hline
$P_3$ & $10$ & $13$\\
$P_5$ & $12$ & $16$\\
$P_6$ & $13$ & $17$\\
$(3,1,1)$ & $13$ & $17$\\
$(2,2,1)$ & $13$ & $17$\\
$(2,2)+(2,1)$&16&21\\
$(2,1,1)+(2,1)$&16&21\\

\end{tabular}
\end{center}
\end{table}

\end{proof}

 \newpage

\section{More colors}\label{sec3}

For $k\geq 3$, we begin by determining $c_k$ for paths, cycles and stars. We will see that, for each of these classes of graphs, three colors already suffice to achieve the strong separation number. 

\begin{theorem}\label{3colpath}
    For the path $P_n$, we have $c_k(P_n)=n-1$ for $n\leq k+1$ and $c_k(P_n)=n$ otherwise, for every $k\geq 3$. 
\end{theorem}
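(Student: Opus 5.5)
The natural split is into lower bounds, which come from endpoint counting, and upper bounds, which come from explicit constructions. Write $P_n=v_1\dots v_n$, $e_i=v_iv_{i+1}$ and $m=n-1$.

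\emph{Upper bounds.} If $n\le k+1$, then $m\le k$. I take the $m$ single-edge paths $\{e_i\}$ and give them pairwise distinct colors. Any two edges are then separated by their own singletons, which have different colors, so $c_k(P_n)\le n-1$.

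If $n\ge k+2$, I use only three colors. Set $Q_0=\{e_1\}$, $Q_m=\{e_m\}$ and $Q_i=\{e_i,e_{i+1}\}$ for $1\le i\le m-1$, which gives $m+1=n$ paths, and color $Q_i$ by $i \bmod 3$. Take $i<j$.
\begin{itemize}
\item The paths containing $e_i$ but not $e_j$ include $Q_{i-1}$, and also $Q_i$ when $j\ge i+2$ (reading $Q_0=\{e_1\}$ when $i=1$).
\item Symmetrically, the paths containing $e_j$ but not $e_i$ include $Q_j$, and also $Q_{j-1}$ when $j\ge i+2$.
\item If $j=i+1$, we use $Q_{i-1}$ and $Q_{i+1}$, whose indices differ by $2$, so their colors differ.
\item If $j\ge i+2$, the available index differences are $j-i-1$, $j-i$ and $j-i+1$. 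Since these are three consecutive integers, at least one is nonzero mod $3$, and that choice gives distinct colors.
\end{itemize}
Hence $c_k(P_n)\le n$ for every $k\ge 3$.

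\emph{Lower bounds.} Proposition~\ref{trees-strong} with $d_1=2$ and $d_2=n-2$ gives $c_k(P_n)\ge ssp(P_n)=n-1$, which settles the case $n\le k+1$. For $n\ge k+2$, suppose for contradiction that there is a $k$-RSPS with exactly $n-1$ paths. For each interior vertex $v_i$, separating $e_{i-1}$ from $e_i$ needs a path ending at $v_i$ from the left and a path starting at $v_i$ to the right. Counting endpoints gives at least $2(n-2)+2=2n-2$. With $n-1$ paths this count is tight, so each interior $v_i$ is the endpoint of exactly one path $L_i$ coming from the left and exactly one path $R_i$ going to the right. Consequently $L_i$ is the only path containing $e_{i-1}$ but not $e_i$, and $R_i$ is the only path containing $e_i$ but not $e_{i-1}$. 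In particular $L_i$ and $R_i$ must have different colors.

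The remaining step, which I expect to be the main obstacle, is to show that in this rigid configuration all $n-1$ paths must receive pairwise distinct colors. That would need $k\ge n-1$, contradicting $n\ge k+2$. My first attempt is the following. Take two paths $P$ and $Q$ of the same color, and choose them extremal, for example with $P$ having the leftmost starting point among equally colored pairs. Then look for a pair of edges, such as the first edges of $P$ and $Q$ or an edge of $P$ together with an edge just beyond one of its ends, whose only separators in one direction are forced to be exactly $P$ and $Q$. Since every vertex carries exactly one left-ending and one right-starting path, the set of paths containing a given edge is determined by the nesting pattern of the intervals. So I would argue case by case according to whether the intervals $P$ and $Q$ are disjoint, overlapping, or nested. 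If this direct argument fails, the fallback is induction on $n$: delete $e_m$ together with its singleton path $R_{m}$ (or $L_n$), and check that the rigidity persists on $P_{n-1}$ and forces one new color.
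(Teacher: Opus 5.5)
\textbf{Upper bounds.} These are correct and essentially the paper's: the distinctly colored singletons for $n\le k+1$, and the same three-colored system $Q_0,\dots,Q_m$ for $n\ge k+2$. Your check via the index differences $j-i-1,\,j-i,\,j-i+1$ is complete.

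\textbf{Gap in the lower bound for $n\ge k+2$.} You stop at exactly the step you call the main obstacle. The extremal case analysis you sketch (same-colored intervals that are disjoint, overlapping or nested) is never carried out. It is also aimed at configurations that cannot occur, because your tight endpoint count already determines the system completely.

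\textbf{The missing idea.} Tightness says two things. First, the left endpoints of the $n-1$ paths are exactly $v_1,\dots,v_{n-1}$, each used once. Second, the right endpoints are exactly $v_2,\dots,v_n$, each used once. Now work from the right:
\begin{itemize}
\item the path with left endpoint $v_{n-1}$ must end at $v_n$;
\item the path with left endpoint $v_{n-2}$ must end at $v_{n-1}$, since $v_n$ is already taken;
\item continuing downward, every path is a single edge.
\end{itemize}
So the system is $\{\{e_1\},\dots,\{e_{n-1}\}\}$, and each edge lies in exactly one path. A pair $e_i,e_j$ can then only be separated by $\{e_i\}$ and $\{e_j\}$, so the $n-1>k$ singletons would need pairwise distinct colors, which is impossible.

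\textbf{Comparison with the paper.} The paper reaches the same structural fact by induction: the only strongly separating path system of $P_n$ with at most $n-1$ paths is $E(P_n)$. Once finished, your endpoint-counting route gets there directly, without tracking how paths are truncated in an induction. Your fallback induction (deleting $\{e_m\}$) would also work, but only if the inductive hypothesis records that all paths are singletons. That is again the observation above, and as written the proposal does not contain it.
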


\begin{proof}

    For $n\leq k+1$, we simply recall that $c_k(P_n)\geq ssp(P_n)$ (lower bound) and then we color each edge a distinct color (upper bound), so let us focus on the case $n>k+1$. 

    For the upper bound, only three colors are required. Let $c_0$:=red, $c_1$:=blue and $c_2$:=green, where indices are taken mod $3$. We construct the $3$-RSPS $\mathcal{F}_n$ by adding to it the following:
    \begin{enumerate}
        \item A red path containing only $e_1$.
        \item A $c_k$-path containing exactly $e_k$ and $e_{k+1}$ for each $k\leq n-2$.
        \item A $c_{n-1}$ path containing only $e_{n-1}$.
    \end{enumerate}

    See Figure \ref{path_3col} for the construction for $P_7$.

        \begin{figure}[h!]
    \centering
    \includegraphics[width=0.35\linewidth]{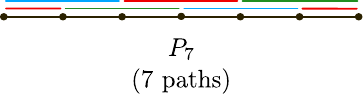}
    \caption{Optimal $3$-RSPS for $P_7$.} 
    \label{path_3col}
\end{figure}

    Every edge of $P_n$ is in two paths of different colors of $\mathcal{F}_n$. If none of these paths coincide for two edges $e_i$ and $e_j$, then they are separated. Otherwise, $e_i$ and $e_j$ are consecutive. But then the two paths ending at their common vertex have different colors, and so separate $e_i$ and $e_j$.

    For the lower bound, we first recall that every RSPS is a strongly separating path system, and we note that the only strongly separating path system of $P_n$ that consists of at most $n-1$ paths is $E(P_n)$. This is proven by induction. It is true for $P_3$. Suppose that it also holds for $P_n$. A strongly separating path system of $P_{n+1}$ must contain a path that has only the edge $e_n$. Suppose that said system has size at most $n$, then $P_{n+1}\setminus\{ e_n\}$ is separated by $n-1$ paths. By induction, these paths are exactly the elements of $E(P_{n+1})\setminus \{e_n\}$, implying that the entire system is $E(P_{n+1})$.

    Since $n-1>k$, there is no way to color $E(P_n)$ in such a way that it rainbow separates the edges of $P_n$. We conclude that $c_k(P_n)>ssp(P_n)\Rightarrow c_k(P_n)\geq n$. 
    
\end{proof}

\begin{theorem}
    For the cycle $C_n$, we have $c_k(C_n)=n$ for every $k\geq 3$.
\end{theorem}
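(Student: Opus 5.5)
The argument has two halves: a counting lower bound valid for every $k$, and an explicit construction with only three colors. Since $c_k$ is non-increasing in $k$, the upper bound $c_3(C_n)\le n$ then gives $c_k(C_n)\le n$ for all $k\ge 3$.

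\emph{Lower bound.} We argue as in the proof of Theorem~\ref{thm: cycles}, but now we only count endpoints. Fix a vertex $v$ with neighbours $u,w$ on the cycle.
\begin{itemize}
\item Separating $uv$ from $vw$ requires a path $P$ containing $uv$ but not $vw$, and a path $Q$ containing $vw$ but not $uv$.
\item Both $P$ and $Q$ have $v$ as an endpoint, and they are distinct paths.
\item Hence every vertex is an endpoint of at least two distinct paths of the system, so the number of (path, endpoint) incidences is at least $2n$.
\item Each path has exactly two endpoints, so the system has at least $n$ paths.
\end{itemize}
This holds for every $k$, indeed for every strongly separating path system. So $c_k(C_n)\ge n$.

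\emph{Upper bound, $n\equiv 0 \pmod 3$.} Label the edges $e_1,\dots,e_n$ cyclically and let $P_i=e_ie_{i+1}$ (indices mod $n$). Give $P_i$ color $i \bmod 3$, which is consistent around the cycle since $3\mid n$. Each edge $e_i$ lies in exactly $P_{i-1}$ and $P_i$, which have different colors.
\begin{itemize}
\item For a non-adjacent pair $e_i,e_j$, neither of $e_i$'s two paths contains $e_j$, and vice versa. Choose any path for $e_i$; among the two differently colored paths for $e_j$, pick one of a different color.
\item For an adjacent pair $e_i,e_{i+1}$, use $P_{i-1}$, which contains $e_i$ but not $e_{i+1}$ when $n\ge 3$, and $P_{i+1}$. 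Their colors $i-1$ and $i+1$ differ mod $3$.
\end{itemize}
So this gives a $3$-RSPS of size $n$.

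\emph{Upper bound, $n\not\equiv 0 \pmod 3$: the main obstacle.} Here we need $P_{i-1}$ and $P_{i+1}$ to receive different colors for every $i$. Together with $P_{i-1}\ne P_i$ this is a proper coloring of the square of $C_n$, which needs more than three colors. So the uniform length-$2$ construction must be patched locally.
\begin{itemize}
\item First approach: keep the mod-$3$ pattern on a long stretch of the cycle, as in the proof of Theorem~\ref{3colpath}. Near the seam, replace one or two length-$2$ paths by a single-edge path together with a length-$3$ path. The total count stays $n$, and every vertex stays an endpoint of exactly two paths, since the lower-bound argument shows equality must hold there.
\item Then choose colors near the seam so that three conditions hold:
  \begin{itemize}
  \item each edge lies in two paths of distinct colors;
  \item at each vertex, the two paths ending there have distinct colors;
  \item each edge inside the length-$3$ path is still separated from its neighbours by the single-edge path and the adjacent paths.
  \end{itemize}
\item The second condition is exactly what handles the adjacent pairs.
\item The small cases $n=4,5$ (and $n=3$, which is three single-edge paths of distinct colors) are checked by hand and used as templates. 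For general $n$ with $n\equiv 1,2 \pmod 3$ the seam gadget is then inserted into a mod-$3$ pattern.
\end{itemize}
I expect that designing and verifying this seam gadget for the two residues is the only real work in the proof.
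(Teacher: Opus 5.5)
Your lower bound is correct, and so is your construction for $n\equiv 0\pmod 3$; both agree with the paper, which also uses all length-$2$ paths colored cyclically in that case. The gap is the case $n\not\equiv 0\pmod 3$. There you give no construction, and the plan you sketch cannot be carried out under the conditions you impose.

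Orient the cycle. As your own counting shows, with $n$ paths each vertex is the left end of exactly one path and the right end of exactly one. Replacing two length-$2$ paths by a single-edge path and a length-$3$ path while preserving this forces a nested pair: a path $A$ on $v,v+1,v+2,v+3$ and a single-edge path $B$ on $\{v+1,v+2\}$. Let $L$ be the path ending at $v+1$, let $R$ be the path starting at $v+2$, and write $\gamma(P)$ for the color of $P$. Inside the length-$2$ pattern, three constraints follow.
\begin{itemize}
\item The adjacent pairs at $v+1$ and $v+2$ force $\gamma(B)\neq\gamma(L)$ and $\gamma(B)\neq\gamma(R)$.
\item Your requirement that the three edges of $A$ lie in paths of two colors forces $\gamma(A)\neq\gamma(L),\gamma(B),\gamma(R)$.
\item The non-adjacent pair $\{v(v{+}1),(v{+}2)(v{+}3)\}$ can only be separated by $L$ and $R$, so $\gamma(L)\neq\gamma(R)$.
\end{itemize}
Thus $A,L,R$ already use all three colors, and no color is left for $B$. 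The same obstruction appears at your proposed template $n=4$. Checking the few possible length profiles (all $2$; a rotation of $(2,2,3,1)$; $(3,1,3,1)$; all $3$) shows that no $3$-RSPS of $C_4$ with four paths has every edge in paths of two distinct colors.

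The missing idea is that bicoloring every edge is not needed. A $k$-RSPS may contain one monochromatic edge per color. In the length-$2$ system, two non-adjacent edges fail to be separated only if both are covered solely by paths of a single common color. So your diagnosis via the square of $C_n$ overstates the obstacle.

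The paper uses exactly this.
\begin{itemize}
\item For $n\equiv 1\pmod 3$ it takes all length-$2$ paths colored $0,1,2,\dots,0,1,2,0$, which creates one monochromatic edge at the seam.
\item For $n\equiv 2\pmod 3$ and $n>5$ it takes all length-$4$ paths with colors cycling and one color skipped at the seam.
\item $C_3$ and $C_5$ are handled by hand.
\end{itemize}
Your gadget can be rescued by relaxing your condition. Take $\gamma(B)=\gamma(A)$ when $n\equiv 1$, or $\gamma(A)=\gamma(R)$ when $n\equiv 2$; each choice produces exactly one monochromatic edge and then works. Even the uniform length-$2$ system works for $n\equiv 2$ if you use two repeats of distinct colors, e.g.\ $0,1,1,2,0,1,2,2$ for $n=8$. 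As written, however, the upper bound for $n\not\equiv 0\pmod 3$ is not proved.
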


\begin{proof}

    The lower bound follows from the fact that $c_k(C_n)\geq ssp(C_n)=n$. For $C_3$, the upper bound comes from single-edge paths of distinct colors. Let us prove the upper bound for $n\geq 4$ by an explicit construction. Once again, three colors are sufficient.

    If $n\equiv 0,1\text{ mod }3$, we add to $\mathcal{F}_n$ all the paths of length $2$, cyclically alternating the colors. Of course, if $n\equiv 1\text{ mod }3$, then at the point where we ``close the cycle" there will be a single instance in which a color will be used twice in a row. For each edge in any pair of non-consecutive edges (except for a pair including the monochromatic edge in the case $n\equiv 1\text{ mod }3$), there are two paths of distinct colors that contain that edge and avoid the other edge of the pair, so such pairs of edges are separated. In the case where $n\equiv 1\text{ mod }3$ and one edge in the pair is the unique monochromatic edge, say covered by two green paths, these do not cover the second edge of the pair, while there are two paths of different colors, at least one of which is not green, covering the second edge of the pair, but not the monochromatic edge. Thus, all the pairs of non-consecutive edges are separated. Moreover, for every two consecutive edges, the two paths that end at their common vertex have distinct colors, so consecutive edges are also separated. We conclude that $\mathcal{F}_n$ is a $3$-RSPS.

   For $n\equiv 2\text{ mod }3$, if we take all the paths of length $2$ with cyclically alternating colors, then the penultimate path is the same color as the first path, so we will get two paths of the same color that share a vertex. So, we instead use the paths of length $4$. We make $\mathcal{F}_n$ consist of all the paths of length $4$, cyclically alternating the colors; at the point where we ``close the cycle", there is a single instance in which one color is skipped. Then the path of length $4$ starting with the edge $e_{i+1}$ has the next color (in the cyclic order) from the path of length $4$ ending with $e_i$, or, if the single instance in which one color is skipped occurs between $e_{i+1}$ and $e_{i-3}$, it has the next to next color. Hence, for every two consecutive edges, the two paths that end at their common vertex have distinct colors, separating said edges. Moreover, if $n>5$, it is true that for each edge in any pair of non-consecutive edges, there are two paths of distinct colors that contain it and avoid the other edge of the pair, so non-consecutive edges are also separated. 
   
   Examples for $C_6$, $C_7$ and $C_8$ can be found in Figure \ref{cycle_3color}. For the special case of $C_5$, see also Figure \ref{cycle_3color}.
\end{proof}

\begin{figure}[h!]
    \centering
    \includegraphics[width=0.9\linewidth]{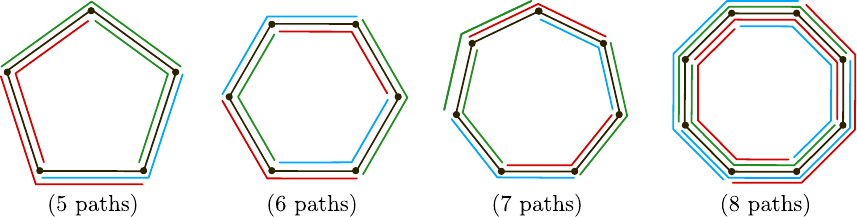}
    \caption{Optimal $3$-RSPS's for $C_5$, $C_6$, $C_7$ and $C_8$.} 
    \label{cycle_3color}
\end{figure}


\begin{theorem}\label{star3}
    For the star $S_n$ we have $c_k(S_n)=n-1$ for every $k\geq 3$.
\end{theorem}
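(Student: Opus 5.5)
Since $c_k$ is non-increasing in $k$, it suffices to prove $c_3(S_n)\le n-1$ and $c_k(S_n)\ge n-1$. The lower bound is immediate: $c_k(S_n)\ge ssp(S_n)$, and by Proposition~\ref{trees-strong} we have $ssp(S_n)=n-1$. For $n\ge 4$ the star has $d_1=n-1$ and $d_2=0$. The small cases $n\le 3$ are trivial. So the work is an explicit $3$-colored system with $m:=n-1$ paths. For $m\le 3$ I would take the single-edge paths $\{e_i\}$ in distinct colors, and from now on assume $m\ge 4$.

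\textbf{Construction.} Every path in a star has one or two edges, so I would mimic the length-$2$ construction used for cycles. Arrange the edges cyclically as $e_1,\dots,e_m$ (indices mod $m$) and let $p_i$ be the two-edge path $\{e_i,e_{i+1}\}$ through the center. This gives exactly $m$ paths, and each $e_i$ lies in exactly $p_{i-1}$ and $p_i$.

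\textbf{Reduction to a coloring condition.} I would translate rainbow separation into conditions on the colors $c(p_i)$.
\begin{itemize}
\item For consecutive edges $e_i,e_{i+1}$, the only paths containing exactly one of them are $p_{i-1}$ and $p_{i+1}$. So we need $c(p_{i-1})\neq c(p_{i+1})$ for all $i$.
\item For non-consecutive $e_i,e_j$, the sets $\{p_{i-1},p_i\}$ and $\{p_{j-1},p_j\}$ are disjoint, and every path in the first avoids $e_j$ and vice versa. So the pair fails only if all four paths share one color, i.e. if $e_i$ and $e_j$ are both monochromatic in the same color.
\end{itemize}
Thus it suffices to find a $3$-coloring of $p_1,\dots,p_m$ such that (a) $c(p_j)\ne c(p_{j+2})$ for all $j$, and (b) no two non-consecutive edges are monochromatic of the same color. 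Edge $e_i$ is monochromatic when $c(p_{i-1})=c(p_i)$.

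\textbf{Choosing the coloring.} For $m\equiv 0\pmod 3$, the periodic pattern red, blue, green, red, blue, green, $\dots$ makes $c(p_j),c(p_{j+1}),c(p_{j+2})$ pairwise distinct. Then (a) holds and no edge is monochromatic. For $m\not\equiv 0\pmod 3$, I would start from the periodic pattern on a prefix whose length is a multiple of $3$ and repair the last one or two positions where the cycle closes.
\begin{itemize}
\item For $m\equiv 1\pmod 3$, append a final path whose color differs from those two positions back and two positions forward (cyclically).
\item For $m\equiv 2\pmod 3$, fix a short tail such as blue, green, red, blue, green, blue, $\dots$ locally, checking (a) directly.
\end{itemize}
Constraint (a) alone is proper $3$-coloring of the graph $j\sim j+2$ on $\mathbb{Z}_m$. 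That graph is one cycle when $m$ is odd and two cycles when $m$ is even, so it is always $3$-colorable. The remaining task is to choose the repair so that every monochromatic edge created has a distinct color, or so that any two same-colored monochromatic edges are consecutive.

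\textbf{Expected obstacle.} The main obstacle is the residue $m\equiv 2\pmod 3$. In the cycle case this forced the paper to switch to length-$4$ paths, which are unavailable in a star, and the parity of $m$ changes the structure of the constraint graph. I expect to handle it with a concrete closing block, such as (red, blue, green)$^*$ followed by red, blue, red, green, blue, green, or something similar. I would verify (a) and (b) on the block and its wrap-around by hand, and treat $m=4,5$ separately (e.g. $m=4$: colors R, B, B, R?). If no $3$-coloring of two-edge paths exists for small $m$, the fallback is to replace two of the $p_i$ by single-edge paths of suitable colors, which keeps the count at $m$.
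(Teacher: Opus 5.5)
You set it up correctly: for $m\ge 4$ and $p_i=\{e_i,e_{i+1}\}$, the system is a $3$-RSPS exactly when (a) $c(p_j)\neq c(p_{j+2})$ for all $j\in\mathbb{Z}_m$ and (b) no two non-consecutive edges are monochromatic in the same color. Your colorings for $m\equiv 0,1\pmod 3$ are also right; appending $R$ or $G$ to $(R,B,G)^t$ creates exactly one monochromatic edge.

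The gap is the case $m\equiv 2\pmod 3$, which you do not actually settle. The one concrete candidate you name fails in two ways:
\begin{itemize}
\item The block $(R,B,R,G,B,G)$ violates (a) internally: positions $1,3$ are both red, and positions $4,6$ are both green.
\item The block has length $6$, so prefixing $(R,B,G)^{*}$ only produces lengths divisible by $3$, which is the residue you had already handled.
\end{itemize}
Your other suggestions are neither specified nor checked: the ``blue, green, red, blue, green, blue, \dots'' tail, and the fallback with two single-edge paths. As written, the upper bound is proved only for $m\not\equiv 2\pmod 3$.

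The hole is easy to fill inside your framework, and the obstacle you expected from the cycle proof does not arise. For $m=3t+2$ with $t\ge 1$, use $(R,B,G)^t$ followed by $G,R$. Condition (a) only needs checking across the seam:
\begin{itemize}
\item $c(p_{3t-1})=B\neq G=c(p_{3t+1})$ and $c(p_{3t})=G\neq R=c(p_{3t+2})$;
\item $c(p_{3t+1})=G\neq R=c(p_1)$ and $c(p_{3t+2})=R\neq B=c(p_2)$.
\end{itemize}
The only monochromatic edges are $e_{3t+1}$ (green) and $e_1$ (red), so (b) holds. With this, your cyclic argument is a valid alternative to the paper's.

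The paper avoids wrap-around issues altogether. It splits the edges into groups of three and covers each group by its three $2$-edge paths in three distinct colors. Every edge then lies in two differently colored paths that avoid all other groups, so cross-group separation is automatic. The at most two leftover edges are covered by single-edge paths of distinct colors. That block decomposition is modular and needs no residue analysis. Your system is a single uniform construction, at the cost of a closure check for each residue class.
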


\begin{proof}
    The lower bound follows from $c_k(S_n)\geq ssp(S_n)=n-1$. For the upper bound, only three colors are required. We firstly superimpose $\lfloor\frac{n-1}{3}\rfloor$ copies of $S_4$ separated as in Figure \ref{star_3col}. After this, up to two edges may be unaccounted for. We cover each such edge with a single-edge path of a different color. 
\end{proof}

\begin{figure}
    \centering
    \includegraphics[width=0.12\linewidth]{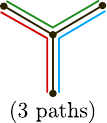}
    \caption{The building block of $3$-RSPS's of $S_n$.} 
    \label{star_3col}
\end{figure}

We now turn our attention to spiders and other trees, where the situation is different; three colors are insufficient to reach the strong separation number. 

\begin{theorem}
Let $S_{n,q}$ be a spider with $q$ legs, all of length two, so that $n=2q+1$. Then $c_3(S_{n,q})\ge 17(n-1)/16-3$.
\end{theorem}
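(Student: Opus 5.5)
The plan is to count the paths of a minimum $3$-RSPS $\mathcal{F}$ in two groups: the forced single-edge paths on the outer edges, and the paths that do the remaining separation work near the centre. Write the legs as $c\,m_i\,v_i$, where $c$ is the centre, with inner edge $f_i=cm_i$ and outer edge $e_i=m_iv_i$, so $n-1=2q$. The target $17(n-1)/16-3$ equals $\frac{17}{8}q-3$. So it suffices to show that $\mathcal{F}$ contains at least $q$ single-edge outer paths plus at least $\frac{9}{8}q-3$ further paths.

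\textbf{Step 1 (forced structure).} A path containing $e_i$ but not $f_i$ must be exactly $\{e_i\}$, so every outer edge carries a single-edge path. For each leg fix one such path and call its colour $\gamma_i$. Separating $f_i$ from $e_i$ then needs a path that contains $f_i$, avoids $e_i$, and has a colour different from $\gamma_i$. Such a path ends at $m_i$. Let $\mathcal{L}$ be the set of paths of $\mathcal{F}$ that are not single outer edges. Each path in $\mathcal{L}$ ends at a middle vertex for at most two legs, since it has two endpoints.

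\textbf{Step 2 (the inner star).} As in the proof of Proposition~\ref{inequality}, deleting any one colour class leaves a weakly separating system. Restricted to the inner star on $f_1,\dots,f_q$, this means that for each pair of colours $\{a,b\}$, the $a$- and $b$-paths of $\mathcal{L}$ weakly separate the star. Hence $L_a+L_b\ge wsp(S_{q+1})=\lfloor 2q/3\rfloor$, and summing over the three pairs gives $|\mathcal{L}|\ge q-O(1)$. This bound is not enough on its own. The missing $q/8$ has to come from the interaction with Step 1: a path of $\mathcal{L}$ that ends at $m_i$ and $m_j$ covers both $f_i$ and $f_j$, so it cannot separate that pair. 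Also, its colour must avoid $\gamma_i$ and $\gamma_j$ for it to serve both legs.

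\textbf{Step 3 (discharging).} Give each leg an initial charge and let every path of $\mathcal{L}$ distribute one unit of charge among the legs whose middle vertex it ends at, or whose inner edge it separates from others. Then classify legs by local type: the colour $\gamma_i$, how many $\mathcal{L}$-paths end at $m_i$, and how many of those end at another middle vertex. Legs served very cheaply, by one path shared with another leg, should force the pair of inner edges $f_i,f_j$ to be separated by extra paths in two distinct colours. Those extra paths act like a weak separation of a star where each path handles at most three edges efficiently, which is where the factor $3$ in $wsp(S_n)$ comes from. I would set this up as a small linear program over leg types, with the constraints coming from Steps 1 and 2 in each colour pair. Its optimum should be $\frac{9}{8}$ path per leg. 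The additive $-3$ absorbs rounding of the floors and colour-class imbalance, roughly one unit per colour.

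\textbf{Main obstacle.} Step 3 is the hard part: identifying which local configurations are tight and proving that no configuration averages below $9/8$. I would start by enumerating leg types and checking the tight example. I expect it to be a periodic block of $8$ legs using $17$ paths in total, and matching that block should guide the choice of charges.
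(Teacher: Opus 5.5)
Steps 1 and 2 are correct, but together they only give $|\mathcal{F}|\ge q+\frac32\lfloor 2q/3\rfloor=(n-1)-O(1)$. That is just the strong-separation bound. The whole surplus of $q/8$ is deferred to Step 3, which is a plan rather than an argument: it is an LP over ``leg types'' whose optimum you expect, but do not show, to be $9/8$.

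Worse, the reduction you set up cannot work as stated. You fix one single-edge path per outer edge and then charge only the paths of $\mathcal{L}$. Any additional single-edge paths on outer edges are therefore counted nowhere, and nothing in your plan uses minimality of $\mathcal{F}$. Without minimality, the target $|\mathcal{L}|\ge\frac98q-3$ is false. Take $3\mid q$ with $q>24$, put a red and a blue single-edge path on every $e_i$, and cover the inner star by blocks $f_{3s+1}f_{3s+2}$, $f_{3s+2}f_{3s+3}$, $f_{3s+3}f_{3s+1}$ coloured red, blue, green. One checks directly that this is a $3$-RSPS, yet $|\mathcal{L}|=q$. For the same reason, your claim in Step 1 that the path ending at $m_i$ must avoid the colour $\gamma_i$ is wrong when $e_i$ carries single-edge paths of two colours. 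Any correct accounting must charge these extra single-edge paths; the paper's quantities $a,b,c$ (outer edges lying in one, two, three single-edge paths) do exactly that.

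The ingredient your constraints do not capture is a genuinely three-colour phenomenon. There is at most one monochromatic edge per colour, so adding at most three single-edge paths (this is the $-3$) yields a system in which every edge lies in paths of at least two colours. Then no path $P$ contains three edges that each lie in exactly two paths. Their second paths all have colours different from that of $P$, so two of them share a colour, and those two edges cannot be separated. Hence each path with $3$ (resp.\ $4$) edges contains at least $1$ (resp.\ $2$) edges covered at least three times.

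Write $x_i,y_i$ for the numbers of inner and outer edges lying in exactly $i$ paths. Double-counting edge--path incidences, the paper gets $|\mathcal{F}|\ge \frac34(n-1)+x_2-\frac a2$, where $a$ counts outer edges covered at least three times but by only one single-edge path. Such an edge lies in at least two longer paths, each containing its inner neighbour. That neighbour also needs a path avoiding the outer edge, so it is covered at least three times. Thus $a+x_2\le\frac{n-1}2$ and $|\mathcal{F}|\ge\frac{n-1}2+\frac32x_2$. Separately, counting incidences on the inner star gives $|\mathcal{F}|\ge (n-1)+\frac12\sum_{i\ge3}x_i$. Balancing the two bounds at $x_2=\frac38(n-1)$ gives $\frac{17}{16}(n-1)$.

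The constant comes from this balancing, and the paper's best upper bound is $\frac54(n-1)$. So searching for a tight periodic block of $8$ legs and $17$ paths is not a reliable guide to choosing the charges.
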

\begin{proof}
    In an RSPS, there is at most one monochromatic edge of each color, so adding up to $3$ additional single-edge paths will yield a $3$-RSPS in which every edge receives at least two colors. We will show that such a system, $\mathcal{F}$, contains at least $17(n-1)/16$ paths. We may assume that no two copies of the same path are the same color.
    Let $A,B,C,D$ denote the number of paths of length $1,2,3,4$ respectively in our $3$-RSPS $\mathcal{F}$. Thus, $|\mathcal{F}|=A+B+C+D$. Let $x_i$ denote the number of internal edges contained in exactly $i$ paths. Let $y_i$ denote the number of external edges contained in exactly $i$ paths.
    Note that we may assume that $i\ge 2$ since each edge is covered by paths of at least $2$ colors. Then $A+2B+3C+4D=\sum_{i=2}^{\infty}i(x_i+y_i)$.

    For any path, it contains at most two edges contained in only one other path. Otherwise, there is either an edge only contained in two paths of the same color or there are two edges who share a path and whose second path is the same color. Neither of these is permitted. Thus a path of length $3$ contains at least one edge that is in at least three paths and a path of length $4$ contains at least two such edges. This gives at least $C+2D$ pairs $(p,e)$ where $p$ is a path of length at least $3$ and $e$ is an edge in that path contained in at least $3$ paths. Of the outer edges contained in at least three paths, suppose that $a$ are contained in one path of length $1$, $b$ are contained in two paths of length $1$, and $c$ are contained in three paths of length $1$. Thus the number of such pairs $(p,e)$ is at most $\sum_{i=3}^{\infty}ix_i+\sum_{i=3}^\infty iy_i-a-2b-3c$. Thus, \begin{align*}
        \sum_{i=3}^{\infty}ix_i+\sum_{i=3}^\infty iy_i-a-2b-3c&\ge C+2D\\
    \sum_{i=3}^{\infty}i(x_i+y_i)&\ge C+2D+a+2b+3c\\
    A+2B+3C+4D-2(x_2+y_2)&\ge C+2D+a+2b+3c\\
    A+2B+2C+2D&\ge 2(x_2+y_2)+a+2b+3c\\
    2|\mathcal{F}|&\ge A+2(x_2+y_2)+a+2b+3c.
    \end{align*}

Note that $a+b+c=\sum_{i=3}^{\infty}y_i$ and $A\ge (n-1)/2$ since each outer edge is in at least one, and no more than three, paths of length $1$. Thus,

\begin{align*}
    2|\mathcal{F}|&\ge (n-1)/2+2x_2+2y_2+2\sum_{i=3}^{\infty}y_i+c-a\\
    |\mathcal{F}|&\ge (n-1)/4+x_2+\sum_{i=2}^{\infty}y_i+(c-a)/2\\
    |\mathcal{F}|&\ge 3(n-1)/4+x_2-a/2.
\end{align*}

Recall that $a$ counts the number of external edges that are in at least three paths but only one of length one. These are necessarily in two paths that contain the adjacent internal edge. However, that internal edge must also be in at least one path not containing the external edge, so the internal edge is contained in at least three paths and thus not enumerated by $x_2$. Therefore, $a+x_2\le (n-1)/2$, so $-a/2\ge x_2/2-(n-1)/4$, so 
\[|\mathcal{F}|\ge 3(n-1)/4+x_2+x_2/2-(n-1)/4=(n-1)/2+3x_2/2.\]

If $x_2\ge 3(n-1)/8$, then we have the desired result of $|\mathcal{F}|\ge 17(n-1)/16$. So we may assume that $x_2<3(n-1)/8$ and thus $\sum_{i=3}^{\infty}x_i>(n-1)/8$.

Recall that the $(n-1)/2$ necessary outer paths of length $1$ do not help separate the inner star on $(n-1)/2$ edges. Any path contributes at most two edges to the inner star and the edges of the inner star are used a total of $\sum_{i=2}^{\infty}ix_i\ge 2x_2+3\sum_{i=3}^{\infty}x_i$ times. Thus,
\[|\mathcal{F}|\ge (n-1)/2+\frac{2x_2+3\sum_{i=3}^{\infty}x_i}{2}=(n-1)/2+\sum_{i=2}^{\infty}x_i+\frac{\sum_{i=3}^{\infty}x_i}{2}=n-1+\frac{\sum_{i=3}^{\infty}x_i}{2}.\]

Under our assumption that $\sum_{i=3}^{\infty}x_i>(n-1)/8$, we get $|\mathcal{F}|>17(n-1)/16$ and the proof is complete. 
\end{proof}

\begin{theorem}\label{threecolorspiders}
    If $T$ is a tree with $n$ vertices, then $c_3(T)\le 5(n-1)/4$. This bound is tight. 
\end{theorem}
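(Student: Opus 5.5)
We will mirror the proof of Theorem~\ref{tree2}: decompose $T$ into edge-disjoint pieces, each hanging off the rest of the tree through a single vertex (bare spiders, extended bare spiders, or pairs of small ``bad'' spiders). For each piece we build a $3$-RSPS using at most $\frac{5}{4}$ times its number of edges. The total is then at most $\frac{5(n-1)}{4}$.

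To make the pieces glue together, we impose a redundancy condition in place of bicolor edge-coverage. We will require that every edge lies in paths of at least two distinct colors. Then for $e$ and $f$ in different pieces, $f$ lies in a path of some color $c$ avoiding $e$, and $e$ lies in a path of a color different from $c$ avoiding $f$, so the pair is separated. We call this the \emph{two-color coverage property}; it is the analogue of the property used in Theorem~\ref{tree2}. The three colors give extra freedom, which is what should let the per-edge ratio drop from $\frac43$ to $\frac54$.

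The basic gadgets come from the $3$-color constructions already given.
\begin{itemize}
\item Stars: the block in Figure~\ref{star_3col} separates three edges with three paths. To get two-color coverage, we instead group four leaf edges at the head and use five paths.
\item Paths: Theorem~\ref{3colpath} gives a $3$-RSPS for $P_m$ with $m$ paths and two-color coverage, which is ratio at most $\frac54$ once $m\ge 5$.
\item Spiders with three legs of length at least $2$: we adapt the length-$2$ overlapping construction of Theorem~\ref{3colpath} along each leg. The extra cost is only a constant number of paths at the head, so the ratio is at most $\frac54$ once the legs are long enough.
\end{itemize}
After greedily stripping these gadgets from a bare spider, the residual spider has boundedly many legs of bounded length. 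This leaves a finite list of leg vectors, analogous to the list in the proof of Theorem~\ref{tree2}. For each, we give an explicit construction on $S'$, on $S'+v$, or on a pair of bad spiders, each within the $\frac54$ budget. The final piece $S'=T'$ may violate the coverage property, once per color at most, and this is handled as in Theorem~\ref{tree2}. We expect this finite casework, especially the small bad spiders with leg vectors like $(2,1)$ and $(2,2)$, to be the main obstacle. If a case fails, we first try enlarging the piece by absorbing the hat together with a neighbouring piece, and otherwise pair it with another bad spider.

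For tightness, we use the spiders $S_{2q+1,q}$ with all legs of length $2$. We refine the counting in the preceding theorem (the $c_3\ge\frac{17}{16}(n-1)-3$ bound), or alternatively exhibit a different family such as spiders with legs of length $3$, to push the lower bound to $\frac54(n-1)-O(1)$. The strategy is as before.
\begin{enumerate}
\item Each outer edge forces a single-edge path.
\item Separating the inner star with three colors then forces many inner edges into at least three paths.
\item We balance the two estimates via a linear optimisation over the quantities $x_i$ and $y_i$.
\end{enumerate}
We would look for the extremal configuration first, for example a small spider achieving exactly $\frac54$, and read off the matching inequalities from it.
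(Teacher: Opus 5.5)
Your upper-bound plan follows the paper's approach. You peel off bare or extended bare spiders and glue them using the property that every edge lies in paths of at least two colors. The final piece may have monochromatic edges, and with three colors these need no repair as long as they have distinct colors. As written, however, the proposal does not prove the bound: the finite casework is left open, and you are missing the reduction that makes it trivial. Any two legs of a bare spider form a path through the head. So two legs with $m\ge 4$ edges in total can be stripped at cost $m+1\le \frac54 m$, using the construction of Theorem~\ref{3colpath}, which has two-color coverage. Three leaf edges at the head cost only three paths, because the block of Theorem~\ref{star3} already has two-color coverage; that is why copies can be superimposed there. Your four-leaf, five-path star gadget and your three-legged spider gadget are therefore unnecessary. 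After these reductions, a non-final bare spider is either a single leg (deferred) or has leg vector $(2,1,1)$, $(2,1)$ or $(1,1)$. The extended spiders $(2,1,1)$ and $(1,1,1)$ are covered by $5$ and $3$ paths with two-color coverage (Figures~\ref{example} and~\ref{star_3col}). The final piece is $(2,1,1)$, or a path with at most three edges covered by single-edge paths of distinct colors. So no bad spiders ever occur, and this matters. Your fallback of pairing bad spiders as in Theorem~\ref{tree2} would bring in bridging paths that contain edges of other pieces, and your gluing argument does not apply to those.

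The tightness half is where the plan genuinely fails. The spiders $S_{2q+1,q}$ cannot give $\frac54(n-1)-O(1)$, because $c_3(S_{2q+1,q})\le \frac76(n-1)$ whenever $3\mid q$. To see this, take $S_{7,3}$ and let $a,b,c$ be the edges at the center and $a',b',c'$ the corresponding leaf edges. List paths by their edge sets:
$\{a'\}$ blue, $\{a',a,c\}$ red, $\{b'\}$ red, $\{b',b,c\}$ blue, $\{a,b\}$ green, $\{c,c'\}$ green, $\{c'\}$ red.
All fifteen pairs are separated and every edge sees two colors. Hence $q/3$ copies can be superimposed, giving $7q/3$ paths for $2q$ edges. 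No refinement of the $\frac{17}{16}$ counting can therefore reach $\frac54$ on this family. Legs of length $3$ are no better: pair them into copies of $P_7$, which costs $7$ paths per $6$ edges. The statement only asserts that the bound is attained by some tree, and that is immediate: $c_3(P_5)=5=\frac54(5-1)$ by Theorem~\ref{3colpath}. The paper's proof consists only of the upper-bound construction; no asymptotic lower bound is claimed or needed.
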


\begin{proof}
Similarly to Theorem \ref{tree2}, we recursively construct a $3$-RSPS for the tree $T$. We initialize $T':=T$. At each step, given a bare spider of the remainder tree $T'$, we may remove from it any amount of $3$-stars or pairs of legs with a total of at least four edges, since any $m$ edges removed in this manner can be covered with at most $5m/4$ paths while preserving the bicolor edge-coverage property, by Theorem \ref{3colpath} and Theorem \ref{star3}. Thus, we may assume that the two longest legs of any bare spider of $T'$ have total length at most $3$ and that it has at most two legs of length $1$. 

Suppose that $T'$ is not a spider. This only leaves the cases $(2,1,1)$, $(2,1)$ and $(1,1)$, as a bare spider with one leg can again be deferred to a later step. The second and third cases have extended bare spiders $(2,1,1)$ and $(1,1,1)$, respectively. We show $3$-RSPS's for these two spiders that satisfy the bicolor edge-coverage property and have size at most $5/4$ times the number of edges in Figure~\ref{example} and in Figure~\ref{star_3col}, respectively.

\begin{figure}[h!]
    \centering
    \includegraphics[width=0.12\linewidth]{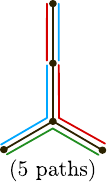}
    \caption{Optimal $3$-RSPS for the spider $(2,1,1)$.} 
    \label{example}
\end{figure}

On the other hand, if $T'$ is a spider, then it can only be $(2,1,1)$ (resolved as above), or a path with at most three edges, which has a $3$-RSPS of size at most the number of edges. 
\end{proof}

Finally, we show that four colors suffice for spiders and for pathy trees to attain the strong separation number.

\begin{theorem} \label{spider4}
    For every spider $S_{n,q}$ with $q\ge 3$, we have $c_k(S_{n,q})=n-1$ for every $k\geq 4$.
\end{theorem}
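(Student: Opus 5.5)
The lower bound is immediate. By Proposition~\ref{trees-strong}, a spider with $q\ge 3$ legs has $q$ leaves, $n-1-q$ vertices of degree $2$, and is not a path, so $c_k(S_{n,q})\ge c_\infty(S_{n,q})=ssp(S_{n,q})=n-1$. Since $c_k$ is non-increasing in $k$, it then suffices to construct a $4$-RSPS of $S_{n,q}$ with exactly $n-1$ paths.

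\textbf{Construction.} Write the edges of leg $i$ as $f_{i,1},\dots,f_{i,L_i}$, listed from the center outward. The count is tight: every leaf edge needs its own single-edge path, which leaves only $n-1-q$ further paths. I will use the following three families.
\begin{enumerate}
\item \emph{Center paths.} Group the legs into consecutive blocks of three, mimicking the $S_4$ building block of Theorem~\ref{star3}. A leftover of one or two legs is attached to the previous block, giving blocks of size $4$ or $5$ that are closed into short cycles. Inside a block, for each cyclically consecutive pair of legs $i,i'$, take one path through the center. This path contains $f_{i,1}$ and $f_{i',1}$, and it is extended by $f_{i,2}$ on one side when $L_i\ge 2$. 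Each leg is extended from exactly one side. This gives $q$ paths in total.
\item \emph{Leg paths.} On each leg, take the two-edge paths $f_{i,j}f_{i,j+1}$ for $2\le j\le L_i-1$.
\item \emph{Leaf paths.} Take the single-edge path $f_{i,L_i}$ whenever $L_i\ge 2$.
\end{enumerate}
Legs of length $1$ receive no leaf path. There, the two center paths through $f_{i,1}$ already contain different other edges, so $f_{i,1}$ is strongly separated from everything. The total is $q+\sum_i(L_i-1)=n-1$. One checks that every edge lies in exactly two paths. For example, $f_{i,1}$ lies in the two center paths of its block, and $f_{i,2}$ lies in one center path and one leg path. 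One also checks that the system is strongly separating, exactly as in the proof of Theorem~\ref{3colpath}: consecutive edges are separated by the two paths ending at their common vertex.

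\textbf{Coloring.} Since each edge lies in exactly two paths, a sufficient condition for a coloring to be rainbow is the following.
\begin{enumerate}
\item The two paths through any edge receive distinct colors.
\item If edges $e,e'$ share a path $P$, and $A$, $B$ are their other paths, then $A$ and $B$ receive distinct colors.
\end{enumerate}
Indeed, when $e$ and $e'$ share no path, condition (1) lets us choose paths of different colors. Along each leg, the paths form a chain: center path, then $f_{i,2}f_{i,3}$, then $f_{i,3}f_{i,4}$, and so on, ending with the leaf path. Conditions (1) and (2) say that any three consecutive paths in this chain are colored distinctly. So a leg can be colored by cycling through three colors, with the starting phase chosen freely. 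The $4$th color is reserved for the center.

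\textbf{Main obstacle.} The difficulty is the interface between the center paths of a block and the first leg paths hanging off them. This is where the auxiliary ``distance $\le 2$'' graph has large degree, so a naive greedy argument would need more than four colors. I will not rely on the sufficient condition blindly. Instead, I will verify the genuinely needed separations directly for each block type (sizes $3$, $4$, $5$, with legs of length $1$, $2$, or $\ge 3$), in the style of the figures for $S_4$ and $C_n$. In each case I will choose the colors of the center paths and the phases of the leg colorings by explicit case analysis. The key freedom is choosing which side of each center path receives the extension $f_{i,2}$; I expect that this freedom, together with the spare $4$th color, resolves every conflict. Separations between different blocks are automatic, because every edge is covered by two distinct colors that avoid the other block. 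So once each block type is handled with four colors, the construction extends to all $q\ge 3$.
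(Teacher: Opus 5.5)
Your decomposition is essentially the paper's. You use blocks of three legs modelled on $S_{7,3}$, absorb the leftover into a $4$- or $5$-leg block, and take cyclic center paths of length $3$. Longer legs get two-edge paths (the $3$-color path trick), leaf edges get single-edge paths, and legs of length $1$ are simply not extended. The paper instead lengthens such legs to length $2$ and then prunes. Your count $n-1$ and the strong separation are fine.

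\textbf{The gap: the $5$-leg block cannot be colored your way.} The whole content of the upper bound is the $4$-coloring, and you leave it as an expectation. That expectation fails for the $5$-leg block. Write $P_i\ni f_{i,1},f_{i+1,1}$ with indices mod $s$. The edge $f_{i,1}$ lies only in $P_{i-1}$ and $P_i$. So separating $f_{i,1}$ from $f_{i+1,1}$ forces $c(P_{i-1})\neq c(P_{i+1})$, and your condition (1) forces $c(P_{i-1})\neq c(P_i)$. For $s=5$ these say that all five center paths get distinct colors, which is impossible with four.

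This obstruction involves only the inner star edges, so no choice of extension sides can remove it. In fact there is no real choice: for legs of length at least $2$, giving each leg exactly one extension around a cycle forces one of the two cyclic orientations. Hence, with four colors, the $5$-block must contain a monochromatic inner edge. For example, take $c(P_1)=c(P_2)$ and give $P_3,P_4,P_5$ the other three colors. This is exactly why the paper notes that its $S_{11,5}$ configuration lacks bicolor edge-coverage and needs a third color supplied by hand on one leg.

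Consequently, your claim that cross-block separations are automatic ``because every edge is covered by two distinct colors'' is false as stated. It survives only because there is a single $5$-block, hence a single monochromatic edge, and that edge must be checked separately against all other edges. As an aside, for $q\equiv 2\pmod 3$ with $q\ge 8$ you could use two $4$-blocks instead, but $q=5$ forces a $5$-block.

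\textbf{The other blocks work, but not as you describe.} Under your conditions (1)--(2), the first chain path on leg $i$ must avoid $c(P_{i-1})$, $c(P_i)$ and $c(P_{i+1})$. That first chain path is $f_{i,2}f_{i,3}$, or the leaf path if $L_i=2$.
\begin{itemize}
\item In a $3$-block, the three center paths need three distinct colors, and every first chain path gets the fourth. So the fourth color goes to the legs, not to the center.
\item In a $4$-block, the center paths need all four colors, and the first chain path of leg $i$ is forced to take $c(P_{i+2})$.
\item After that, each leg is $3$-periodic with its phase forced, not chosen freely.
\item In the $5$-block with $c(P_1)=c(P_2)$, the leg through the monochromatic edge $f_{2,1}$ meets only two colors at its start, so its chain must bring in a third.
\end{itemize}
Writing these colorings down and checking the single monochromatic edge against everything else is what is missing. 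With that added, your argument becomes a complete proof along the same lines as the paper's.
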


\begin{proof}

    The lower bound follows from the fact that $c_k(S_{n,q})\geq ssp(S_{n,q})=n-1$. We will prove the upper bound by an explicit construction.
    
    We begin by proving the statement for spiders $S_{n,q}$ with all legs of length $2$. We decompose the legs of $S_{n,q}$ into groups; specifically, we superimpose $\lfloor\frac{q}{3}\rfloor-1$ copies of the $4$-RSPS for $S_{7,3}$ given in Figure \ref{4spiders}. For the remaining legs --- either three, four or five depending on the value of $q \text{ mod } 3$ --- we apply the corresponding $4$-RSPS for $S_{7,3}$, $S_{9,4}$ or $S_{11,5}$, also shown in Figure \ref{4spiders}. 

    \begin{figure}[h!]
    \centering
    \includegraphics[width=1\linewidth]{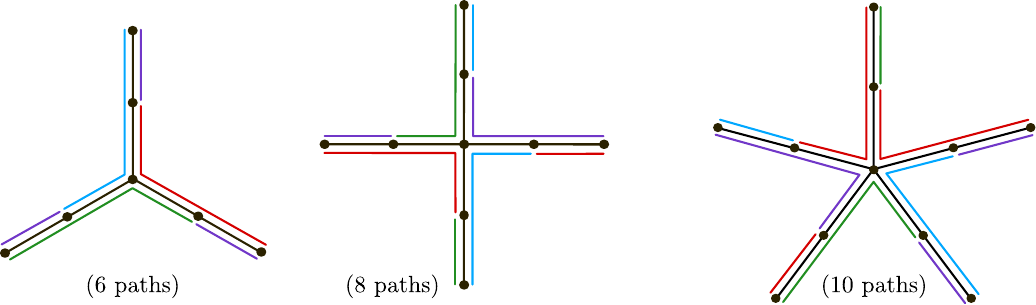}
    \caption{Optimal $4$-RSPS's for the spiders $S_{7,3}$, $S_{9,4}$ and $S_{11,5}$.} 
    \label{4spiders}
\end{figure}

    We then prove the upper bound for general spiders $S_{n,q}$ that are not paths. Let $S_{n,q}'$ be the spider obtained from $S_{n,q}$ by lengthening all of its legs of length $1$ by exactly one edge. We firstly construct a $4$-RSPS $\mathcal{F}'_n$ for $S_{n,q}'$. Let us denote by $S_{central}$ the \emph{central spider of length} $2$ in $S_{n,q}'$, i.e. the maximal spider that is a subgraph of $S_{n,q}'$ and has all legs of length $2$. We first consider the $4$-RSPS $\mathcal{F}_{central}$ of $S_{central}$ that is constructed as in the previous paragraph. Then, for each leg of $S_{n,q}'$ of length at least $3$, we extend the system in the same manner as for $3$-RSPS's of paths (see Figure \ref{path_3col}), using in each leg the colors of the three paths of $S_{central}$ that intersect it. Special attention is required in the case that the central spider has $2\pmod{3}$ legs; in order to do the 3-color path extension trick in the leg corresponding to the top leg of $S_{11,5}$ (see Figure \ref{4spiders}), we use blue as the third color. This yields a $4$-RSPS of $S_{n,q}'$ of size at most $|E(S_{n,q}')|$. Finally, we remove from $S_{n,q}'$ the edges that we added at the start. Each of these edges is contained in two paths of $\mathcal{F}'_n$, one of length $1$ and one of length $3$. We delete the former and prune the latter to obtain a $4$-RSPS $\mathcal{F}_n$ of $S_{n,q}$ that is of size $|E(S_{n,q})|=n-1$. 
\end{proof}

\begin{theorem}\label{fourcolortrees}
    If $T$ is a tree with $n$ vertices that is not a path, $c_k(T)\leq n-1$ for every $k\geq 4$. For pathy classes of trees, this bound is tight up to sublinear error.
\end{theorem}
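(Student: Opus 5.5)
The plan is to follow the decomposition strategy of Theorem \ref{tree2} and Theorem \ref{threecolorspiders}, now with four colors and a budget of one path per edge. Call a $4$-RSPS of a subtree \emph{good} if it uses at most as many paths as edges and every edge lies in paths of at least two distinct colors. The key observation, already used in those proofs, is that good systems glue. Suppose the edge set of $T$ is partitioned into subtrees, each carrying a good system. Take $e$ and $f$ in different pieces. Then $e$ lies in a path of its piece avoiding $f$, and $f$ lies in paths of at least two colors avoiding $e$, so we can choose one whose color differs from that of the path containing $e$. Within a piece, separation is inherited. It therefore suffices to decompose $E(T)$ into pieces admitting good $4$-RSPS's.

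The pieces will be spiders. I would repeatedly pick a bare spider $S$ of the remainder $T'$, using (\ref{eq:two_bare_spiders}). Whenever $S$ has at least three legs, I would cover $S$ (or, if necessary, the extended spider $S+v$ including the hat) by Theorem \ref{spider4}. To see that the resulting system is good, I would check that the construction in Theorem \ref{spider4} can be chosen so that every edge is bicolored. This looks plausible from the figures: each edge appears in two paths of different colors, except possibly for the pruned leg-$1$ edges, which are still covered by one pruned path plus the central paths. Bare spiders with a single leg (pendant paths) are deferred and absorbed into a later spider, as in Theorem \ref{tree2}. Bare spiders with exactly two legs need separate handling. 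A two-legged bare spider together with its hat is a path hanging at $v$. Here I would extend it to $S+v$ plus the edge(s) at $v$, to form a spider with at least three legs, or else treat it via Theorem \ref{3colpath}. For three colors that theorem gives $n$ paths for $n-1$ edges, which costs an extra path. To avoid this, I would merge such paths into a neighbouring spider before applying Theorem \ref{spider4}. The final remaining piece $T'$ is a spider (not a path unless $T$ was one), and is handled by Theorem \ref{spider4} directly.

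The main obstacle is ensuring that no piece ever is a bare path, since paths genuinely cost $n$ rather than $n-1$ paths. The fix I would try first is to cut the decomposition only at branch vertices: root $T$ and assign each branch vertex $h$ together with all its child legs. A piece then consists of maximal pendant paths hanging from $h$, plus the edge toward its parent, with that parent path split appropriately. Every piece then has a center of degree at least $3$ and is a spider with $q\ge3$, so Theorem \ref{spider4} applies with exactly $e(\text{piece})$ paths. Summing gives $c_k(T)\le n-1$.

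For tightness on pathy classes, I would use Proposition \ref{trees-strong}: $c_k(T)\ge ssp(T)=d_1+d_2$. If only $o(n)$ vertices have degree at least $3$, then $d_1=o(n)$ as well, since $d_1\le 2+\sum_{d(v)\ge3}(d(v)-2)$. Hence $d_1+d_2=n-o(n)$, matching the upper bound up to sublinear error.
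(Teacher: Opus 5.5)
There is a genuine gap in the decomposition step, which is where the difficulty of this theorem lies. Your gluing observation is correct, and it is what the paper uses: every piece except possibly the last must have the bicolor edge-coverage property. But two of your claims about the pieces are false.

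\emph{The optimal $4$-RSPS's of Theorem \ref{spider4} cannot always be chosen bicolored.} The paper notes that its constructions for $q\equiv 2\pmod 3$ legs contain a monochromatic edge, and for five short legs this is unavoidable. Take the star with five edges and a bicolored $4$-RSPS of five paths. Every path must have two edges and every edge must lie in exactly two paths. No path can be repeated, since a repeated path $\{e_i,e_j\}$ would leave no path containing $e_i$ but not $e_j$. So the paths form a $5$-cycle $P_{12},P_{23},\dots,P_{51}$ on the legs. Let $c_i$ be the color of $P_{i,i+1}$. Bicoloring $e_i$ forces $c_{i-1}\neq c_i$. Separating $e_i$ from $e_{i+1}$ forces $c_{i-1}\neq c_{i+1}$, because $P_{i-1,i}$ and $P_{i+1,i+2}$ are the only usable paths. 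Together these require five colors.

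\emph{Cutting only at branch vertices does not give spiders with at least three legs.} A link of length one between two branch vertices cannot be split between them. Consider the caterpillar whose $m$ spine vertices all have degree $3$: it has $m$ branch vertices but only $2m+1$ edges. So at least $(m-1)/2$ of your pieces are copies of $P_2$ or $P_3$. Within budget, such a piece consists of single-edge paths, so its edges are monochromatic. For large $m$, two of these pieces then share a color with only four colors available, and their edges are not separated.

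What is missing is the paper's rule for choosing pieces. Repeatedly take a bare spider $S$ of the current remainder $T'$; this lets legs run through vertices that were branch vertices of $T$. If the number of legs $q$ satisfies $q\not\equiv 2\pmod 3$, use $S$ itself. If $q\equiv 2\pmod 3$, use the extended spider $S^*=S+v$, which has $q+1\equiv 0\pmod 3$ legs. A two-legged bare spider together with its hat is not a path hanging at $v$, as you write, but a three-legged spider centered at $h$, so that case needs no merging. Only the final piece may have a monochromatic edge.

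The endgame in which $T'$ consists of two $2\pmod 3$ spiders sharing a hat is handled separately. If one of them has at least five legs, peel the other together with the hat and finish with the remaining spider. If both have two legs, use the nine-edge construction of Figure \ref{fig:placeholder}, extended or truncated. This endgame treatment is what guarantees that no piece is ever a path.

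Finally, in the tightness part, the claim $d_1=o(n)$ is false for pathy trees (a star is pathy). You do not need it, since $d_1+d_2=n-|\{v:d(v)\ge 3\}|=n-o(n)$ directly.
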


\begin{proof}
    Similarly to Theorem \ref{tree2}, we construct our $4$-RSPS within a number of steps. In each step, we choose a bare spider, we find a $4$-RSPS for it (this time using the bound of Theorem \ref{spider4}), and we remove it. For a given bare spider $S$, if $S$ does not have $2\pmod{3}$ legs, we build for it a $4$-RSPS $\mathcal{F}_S$ with at most $e(S)$ paths and with the bicolor edge-coverage property (as in Theorem \ref{spider4}) and we update $T'$ by removing from it the edges of $S$. 

     If $S$ has $2\pmod{3}$ legs, then the construction of Figure \ref{4spiders} cannot be applied directly, as it does not have the bicolor edge-coverage property. Instead, we can opt to work with the extended bare spider $S^*$, which has $0\pmod{3}$ legs. We then build a $4$-RSPS for $S^*$ with at most $e(S^*)$ paths and with the bicolor edge-coverage property (as in Theorem \ref{spider4}) and we update $T'$ by removing from it the edges of $S^*$. We do this unless $T'$ is a spider and thus there is no hat edge, or when $T'$ consists of two spiders with $2\pmod{3}$ legs sharing a hat. If $T'$ is a non-path spider, we can use the construction from Theorem~\ref{spider4}. This will violate the bicolor edge-coverage property in one edge, which is acceptable for the last bare spider. The manner in which we handle the case of two spiders with $2\pmod{3}$ legs sharing a hat will ensure that whenever $T'$ is a spider, it is not a path.

     If $T'$ consists of two spiders with $2\pmod{3}$ legs sharing a hat, and the first spider has at least five legs, then we use Theorem \ref{spider4} to form a $4$-RSPS for the extended bare spider of the second spider and remove these edges from $T'$. Then we are left with a spider with at least five legs for which we may again use the construction from Theorem~\ref{spider4}.

     If $T'$ consists of two spiders with two legs each, sharing a hat, we use a construction based on extending or truncating the construction in Figure \ref{fig:placeholder}. The construction shown uses nine paths to separate a tree with nine edges. For any leg of length $1$, we can remove a terminal edge from the picture, thus eliminating one edge from the graph, as well as one path from the construction that is removed instead of shortened. For any leg of length more than $2$, we extend as before, extending a terminal single-edge path to contain the new edge, and adding a new single-edge path of the color used least recently on that leg. Thus, such a $T'$ with $m$ edges is separated via $m$ paths. 

     \begin{figure}
         \centering
         \includegraphics[width=0.85\linewidth]{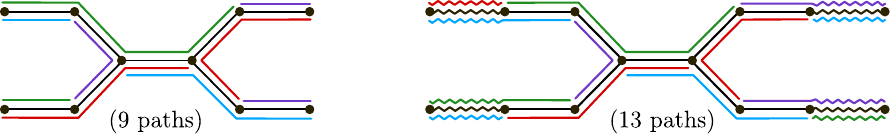}
         \caption{ An optimal $4$-RSPS for two paths attached with an edge.}
         \label{fig:placeholder}
     \end{figure}

    

    We let $\mathcal{F}$ be the union of all of the above $4$-RSPS's and note that its size is at most $n-1$. The lower bound for pathy classes follows directly from Proposition \ref{trees-strong}.
\end{proof}

\section{The sequence $r_k$ for classes of graphs}
\label{sec4}

In this section, we will study the family of parameters $r_k$, and especially the chromatic separation number $\chi_{rs}$ of various classes of graphs. We saw in Section~\ref{sec2} that the class $\mathcal{P}$ of paths has $\chi_{rs}(\mathcal{P})=2$. The same holds for the class $\mathcal{C}$ of cycles. For the class $\mathcal{S}$ of stars, we have $\chi_{rs}(\mathcal{S})=3$; for the class $\mathcal{SP}$ of spiders, $\chi_{rs}(\mathcal{SP})=4$; and $\chi_{rs}(\mathcal{PT})\leq 4$ is true for every pathy class of trees $\mathcal{PT}$. In contrast, we begin this section by proving that complete graphs are not rainbow separable.

\begin{theorem}\label{complete}
   For every $k\in\{2,3,\dots\}$, we have $r_k(\mathcal{K})\geq\frac{3}{2}$.
\end{theorem}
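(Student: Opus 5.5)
The plan is to bound the number of path endpoints, since $c_\infty(K_n)=ssp(K_n)\le n+9$ by Proposition~\ref{cliques}. If I can show that every $k$-RSPS $\mathcal{F}$ of $K_n$ satisfies $|\mathcal{F}|\ge \frac32\,(n-C_k)$ for a constant $C_k$ depending only on $k$, then dividing by $n+9$ and letting $n\to\infty$ gives $r_k(\mathcal{K})\ge \frac32$. Each path has exactly two endpoints, so writing $a_v$ for the number of paths of $\mathcal{F}$ ending at $v$ we have $2|\mathcal{F}|=\sum_v a_v$. It therefore suffices to prove that $a_v\ge 3$ for all but $O_k(1)$ vertices $v$.

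\textbf{Local structure at a vertex.} Fix $v$ and look at the star of the $n-1$ edges at $v$. Every path through $v$ meets this star either in one edge (if $v$ is an endpoint) or in two edges (if $v$ is interior). Thus $\mathcal{F}$ induces on the star a coloured system of singletons (there are $a_v$ of them) and pairs; write $H_v$ for the coloured multigraph on the star edges formed by the pairs.

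Two observations follow from the rainbow condition.
\begin{enumerate}
\item As in Proposition~\ref{inequality}, an edge of $K_n$ covered by paths of only one colour $c$ cannot be rainbow separated from another such edge of the same colour. So globally there are at most $k$ monochromatic edges, and they touch at most $2k$ vertices; I discard these vertices.
\item Separating $vu$ from $vw$ requires paths of two distinct colours that each contain exactly one of them. At $v$, such a path uses either a singleton or an $H_v$-edge not joining $vu$ to $vw$.
\end{enumerate}

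\textbf{Excluding $a_v\le 2$.} The main step is to show that $a_v\le 2$ forces a contradiction outside a bounded exceptional set. The local picture alone does not suffice. For instance, with $a_v=0$ one can take $H_v$ to be a Hamilton cycle on the star whose edge colours satisfy $c_{i-1}\ne c_{i+1}$, and this separates the star with three colours. So the argument must be global, and I would combine the local structure at $v$ with the structure at the other endpoints $u$ of the edges $vu$.

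What I would try first is the following. Every edge $vu$ is covered by at least two paths, and when $a_v\le 2$ almost all of these paths pass through $v$ as interior vertices. I would then follow these paths to $u$ and use the triangles $vuw$: the pair $\{vu,uw\}$ must also be rainbow separated at $u$, and the pair $\{vu,vw\}$ at $v$. The aim is a double count showing that the number of pairs (path, interior vertex), weighted by colour constraints, exceeds what $|\mathcal{F}|$ paths in $K_n$ can supply, unless $\sum_v a_v\ge 3n-O_k(1)$.

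An alternative route is to strengthen the weak-separation lower bound $wsp(K_n)\ge n-1$ of Proposition~\ref{cliques}. One would apply its proof to the subsystems obtained by deleting one colour class, and use that a vertex with $a_v\le 2$ makes the deleted class very constrained near $v$.

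This counting step is where I expect the main difficulty, and I have not worked it out. Once it holds, summing gives $2|\mathcal{F}|\ge 3(n-O_k(1))$, and the theorem follows.
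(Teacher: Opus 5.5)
\textbf{There is a genuine gap: the step you leave open is the whole theorem, and as you formulate it, it is false.}

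\textbf{The pointwise endpoint claim fails.} Your reduction needs $a_v\ge 3$ for all but $O_k(1)$ vertices in every $k$-RSPS of $K_n$. This fails for the following system. Fix two vertices $x,y$ and take $\Theta(n\log n)$ uniformly random Hamiltonian $x$--$y$ paths, each coloured red or blue at random. Add the single-edge path $xy$.
\begin{itemize}
\item For every ordered pair of edges other than $xy$, a random such path contains the first edge but not the second with probability $\Theta(1/n)$. So with high probability every ordered pair is handled by paths of both colours.
\item No Hamiltonian $x$--$y$ path contains $xy$, so $xy$ is also separated from every other edge.
\end{itemize}
This is a $2$-RSPS in which $a_v=0$ for every $v\neq x,y$. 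So the rainbow condition does not force endpoints up. Any argument along your lines would have to use that $|\mathcal{F}|$ is small in an essential way. Your sketch gives no mechanism for this, and, as you note yourself, the local picture at $v$ gives nothing.

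Your alternative route, deleting a colour class and using weak separation, is exactly Proposition~\ref{inequality}. It yields only $\frac{k}{k-1}(n-1)$, which settles $k\le 3$ but not $k\ge 4$. The phrase ``constrained near $v$'' is not quantified.

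\textbf{The missing idea is coverage multiplicity.} What the rainbow condition forces up is how many paths cover each edge, and the paper counts edge--path incidences instead of endpoints.
\begin{itemize}
\item Take $\mathcal{F}$ minimum, so $|\mathcal{F}|\le 2n+10$ by Propositions~\ref{cliques} and~\ref{inequality}.
\item Let $E_{\le 2}$ be the set of edges lying in at most two paths. At most $k$ of them lie in a single path, since there is at most one monochromatic edge per colour.
\item Build an auxiliary graph on vertex set $\mathcal{F}$, joining $P$ and $Q$ when they are the two paths covering some edge of $E_{\le 2}$. It is simple, because two edges covered by exactly the same two paths are not separated.
\item If $P$ meets $Q_1$ along $e_1\in E_{\le 2}$ and $Q_2$ along $e_2\in E_{\le 2}$, then $Q_1$ and $Q_2$ are the only paths containing exactly one of $e_1,e_2$. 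So they must have different colours.
\item Hence all neighbours of any vertex have distinct colours, the maximum degree is at most $k$, and $|E_{\le 2}|\le k(n+5)+k=O_k(n)$.
\end{itemize}
Every other edge lies in at least three paths, and every path in $K_n$ has at most $n-1$ edges. Therefore $(n-1)|\mathcal{F}|\ge 3\bigl(\binom{n}{2}-O_k(n)\bigr)$, so $|\mathcal{F}|\ge \frac32 n-O_k(1)$. Dividing by $ssp(K_n)\le n+9$ gives the theorem.

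Your observation that edges are covered at least twice is the easy half. The bounded-degree auxiliary graph is what upgrades ``twice'' to ``three times'' for all but $O_k(n)$ edges, and this bound does not care where the endpoints lie.
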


\begin{proof}

 For $n\in\mathbb{N}$, fixed $k$, and $\mathcal{F}_k$ a minimum $k$-RSPS of $K_n$, let $E_{\leq 2}$ be the set of edges of $K_n$ belonging to $\leq 2$ paths in $\mathcal{F}_k$. Let $G_{\mathcal{F}_k}$ be an auxiliary graph, each vertex of which corresponds to a path in $\mathcal{F}_k$ and each edge of which corresponds to an intersection of two such paths along an edge in $E_{\leq 2}$. By Proposition \ref{cliques} and Proposition \ref{inequality}, this is a simple graph of size $\leq 2n+10$. Moreover, the neighborhood of every vertex of $G_{\mathcal{F}_k}$ corresponds to paths of distinct colors, as the situation in Figure~\ref{contradiction} cannot be encountered in $\mathcal{F}_k$. That is, the degree of each vertex in $G_{\mathcal{F}_k}$ is bounded above by $k$. We conclude that $|E_{\leq 2}|=O_k(n)$, which implies that $|\mathcal{F}_k|\geq \frac {3({n \choose 2}-O_k(n))}{n-1}\simeq \frac{3}{2}n$, so $r_k(\mathcal{K})\geq\frac{3}{2}$ for every $k$. 

 \end{proof}

\begin{figure}[h!]
	\centering
	\includegraphics[width=0.5\textwidth]{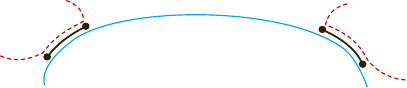}
	\caption{The two marked edges in $E_{\leq 2}$ are not separated, so this cannot occur in $\mathcal{F}_k$.}
    \label{contradiction}
\end{figure}

In fact, we can say something much more general. In \cite{nico}, Cristina Fernandes, Guilherme Oliveira-Mota and Nicolás Sanhueza-Matamala proved that every graph that is almost $\alpha n$-regular and robustly connected (see \cite{nico} for precise definitions) has strong separation number $(\sqrt{3\alpha+1}-1+~o(1))n$. On the other hand, the argument from Theorem~\ref{complete} yields a lower bound of $\frac{3}{2}\alpha$ for every rainbow separation ratio of any class of such graphs, hence:

\begin{theorem}\label{densetheorem}
    If $\Sigma$ is a class of dense, almost-regular, robustly connected graphs, then it is not rainbow separable.  
\end{theorem}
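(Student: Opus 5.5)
We would prove Theorem~\ref{densetheorem} by transplanting the counting argument of Theorem~\ref{complete} to the dense setting and comparing it with the strong separation number computed in \cite{nico}. Fix $\alpha\in(0,1]$ and a class $\Sigma$ of graphs that are almost $\alpha n$-regular and robustly connected, so that $e(G)=(\frac{\alpha}{2}+o(1))n^2$ and $c_\infty(G)=ssp(G)=(\sqrt{3\alpha+1}-1+o(1))n$ by \cite{nico}. The goal is to show that for every fixed $k$, $c_k(G)\geq \frac{3}{2}\alpha n-O_k(n)\cdot o(1)$, more precisely $c_k(G)\geq \frac{3\alpha}{2}(n-O_k(1))$, and then to check that $\frac{3\alpha}{2}>\sqrt{3\alpha+1}-1$ for all $\alpha\in(0,1]$, giving $r_k(\Sigma)\geq\frac{3\alpha}{2(\sqrt{3\alpha+1}-1)}>1$ uniformly in $k$, so $\Sigma$ is not rainbow separable.

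For the lower bound, let $\mathcal{F}_k$ be a minimum $k$-RSPS of $G\in\Sigma$. First we bound $|\mathcal{F}_k|=O(n)$: by Proposition~\ref{inequality}, $c_k(G)\leq c_2(G)\leq wsp(G)+ssp(G)\leq 2\,ssp(G)$, which is $O(n)$ by \cite{nico} (or by the general bound $ssp(G)\leq 19n$ of \cite{19n}). Every edge lies in at least two paths of $\mathcal{F}_k$ (two of distinct colors are needed to separate it from any other edge). Let $E_{\leq 2}$ be the edges in exactly two paths, and form the auxiliary graph $G_{\mathcal{F}_k}$ on vertex set $\mathcal{F}_k$ with an edge for each pair of paths meeting in an edge of $E_{\leq 2}$. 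As in the proof of Theorem~\ref{complete}, two edges of $E_{\leq 2}$ lying in a common path $P$ whose other covering paths have the same color cannot be separated (each of them is only covered by $P$ and a path of that single color), and two edges of $E_{\le 2}$ covered by the same pair of paths are clearly not separated; hence $G_{\mathcal{F}_k}$ is simple and each vertex has neighbors of pairwise distinct colors, so maximum degree at most $k$. Thus $|E_{\leq 2}|\leq k|\mathcal{F}_k|/2=O_k(n)$.

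Now double count incidences: $\sum_{P\in\mathcal{F}_k}|E(P)|\geq 3(e(G)-|E_{\le 2}|)$, and each path has at most $n-1$ edges, so $|\mathcal{F}_k|\geq \frac{3(e(G)-O_k(n))}{n-1}=\frac{3\alpha}{2}n-O_k(1)-o(n)$. The step requiring the most care is this last comparison: the $o(n)$ error in $e(G)$ coming from almost-regularity must be kept genuinely sublinear so that dividing by $c_\infty(G)$ and taking $\limsup$ yields exactly $\frac{3\alpha}{2(\sqrt{3\alpha+1}-1)}$. Finally, the elementary inequality: setting $s=\sqrt{3\alpha+1}\in(1,2]$, we need $\frac{s^2-1}{2}>s-1$, i.e. $(s-1)^2>0$, which holds since $s>1$. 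This completes the plan.
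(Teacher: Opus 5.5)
Your proposal is correct and follows the paper's argument: you rerun the bounded-degree auxiliary-graph and double-counting argument of Theorem~\ref{complete} to get $c_k(G)\geq(\frac{3\alpha}{2}-o(1))n$ for each fixed $k$, then compare this with the value $(\sqrt{3\alpha+1}-1+o(1))n$ of $c_\infty(G)$ from \cite{nico}, and the comparison reduces to $(s-1)^2>0$. One harmless slip is the claim that every edge lies in at least two paths, which is false: an edge covered by a single path is possible (such an edge is monochromatic, and there is at most one monochromatic edge per color), so $E_{\le 2}$ must also include these at most $k$ edges, and your $O_k(n)$ error term absorbs them.
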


Such classes are, for example, the complete graphs and the Erd\H{o}s-R\'enyi random graphs $G(n,p)$ with constant $p$. 

Starting from Theorem \ref{complete}, we can also easily obtain classes that have a given rainbow separation ratio in $[1,\frac{3}{2}]$.

\begin{corollary}
    For every $k\in\{2,3,\dots\}$ and $r\in [1,r_k(\mathcal{K})]$, there is a class $\Sigma$ of connected graphs with $r_k(\Sigma)=r$.
\end{corollary}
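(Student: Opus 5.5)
Fix $k$ and $r\in[1,r_k(\mathcal{K})]$. The plan is to build graphs that interpolate between a "cheap" piece with ratio essentially $1$ and a clique piece with ratio close to $r_k(\mathcal{K})$, and then tune the relative sizes of the two pieces.

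\textbf{Constructing $G_{n,m}$.} Choose $n_i\to\infty$ such that $r_k(K_{n_i})\to r_k(\mathcal{K})$; by the definition of $\limsup$ we can assume the ratios converge. For each $i$, form $G_i$ by taking $K_{n_i}$ and attaching at one vertex a long pendant path $P$ with $m_i$ edges. Paths are the natural choice because $c_k(P_m)$ and $ssp(P_m)$ differ by at most $2$ (Theorems \ref{thm: paths} and \ref{3colpath}).

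\textbf{Approximate additivity.} The key step is to show that, up to $O_k(1)$ additive error, $c_k(G)\approx c_k(K_n)+m$ and $ssp(G)\approx ssp(K_n)+m$.

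\emph{Upper bound.} Take an optimal system for $K_n$ and a system for the path. Any pair with one edge in each part is already separated by these systems, provided every edge lies in paths of at least two colors. In a $k$-RSPS at most one edge is monochromatic in each color, so adding $O(k)$ single-edge paths guarantees this. The strong case is similar.

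\emph{Lower bound.} Restricting any system to $E(K_n)$ by intersecting each path with $K_n$ gives a family of paths (the cut vertex ensures each intersection is a single subpath), and this restricted family still separates $K_n$. Separately, the terminal edges of the pendant path force single-edge paths, exactly as in the induction in Theorem \ref{thm: paths}. Together these give $c_k(G)\ge c_k(K_n)+m-O(1)$.

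The main obstacle is this lower bound. A path of the system can cross the cut vertex and so serve both parts at once, which is why the $O(1)$ losses must be controlled. If crossing paths cause trouble, the fallback is the crude bound $c_k(G)\ge \max(c_k(K_n),\,m)$ combined with a scaling argument, though that gives weaker control over the ratio.

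\textbf{Choosing the parameters.} With additivity in hand,
\[
r_k(G_i)\approx\frac{r_i s_i+m_i}{s_i+m_i},
\]
where $s_i=ssp(K_{n_i})=\Theta(n_i)$ and $r_i=r_k(K_{n_i})$. Setting $m_i=\lambda s_i$ makes this tend to $\frac{r_k(\mathcal{K})+\lambda}{1+\lambda}$, and as $\lambda$ ranges over $[0,\infty)$ this covers the interval $(1,r_k(\mathcal{K})]$.

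Taking the class $\Sigma=\{G_i\}$, the $\limsup$ is a genuine limit, so $r_k(\Sigma)=r$. The endpoint $r=1$ is handled directly by the class of paths, using Theorems \ref{thm: paths} and \ref{3colpath}.
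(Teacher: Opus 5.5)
Your proposal is correct and follows essentially the paper's argument: a clique with a pendant path, where both $c_k$ and $c_\infty$ are additive up to $O_k(1)$, and the path length is tuned linearly in the clique size. Your passage to a subsequence with $r_k(K_{n_i})\to r_k(\mathcal{K})$ is in fact more careful than the paper, which writes $r_k(\mathcal{K})-o(1)$ ``for large $n$'' even though $r_k(\mathcal{K})$ is only a $\limsup$.

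The lower bound you flag as the main obstacle already works as you sketched it, and no fallback is needed. Leaf deletion only ever discards single-edge paths of the pendant path, while paths crossing the cut vertex are merely truncated. This gives $c_k(G)\ge c_k(K_n)+m-1$. The paper argues this step differently: it directly counts, for each pendant edge, a path containing it but not its neighbor nearer the clique. These paths never meet $K_n$ and are pairwise distinct.
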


\begin{proof}
    Let $\lambda:=\frac{r-1}{r_k(\mathcal{K})-1}$ and let $\Sigma$ be the class of graphs $G_n$ that consist of a path $P^{\lambda}(n):=P_{\lceil(1-\lambda)n\rceil}$ ending at a complete graph $K^{\lambda}(n):=K_{\lfloor\lambda n\rfloor}$. By treating $P^{\lambda}(n)$ and $K^{\lambda}(n)$ separately we can show that $c_k(G_n)\leq (1-\lambda)n+2+(r_k(\mathcal{K})+o(1))(\lambda n+9)=rn+o(n)$ and $c_{\infty}(G_n)\leq n+8$. On the other hand, let $\mathcal{F}_n$ be a strongly separating path system of $G_n$. For every edge of $P^{\lambda}(n)$ except for the last, there must be a path in $\mathcal{F}_n$ that separates this edge from the one that is incident to it and closer to $K^{\lambda}(n)$. This yields at least $(1-\lambda)n-2$ paths that do not intersect $K^{\lambda}(n)$. Moreover, any nonempty edge-intersection of a path in $\mathcal{F}_n$ with $K^{\lambda}(n)$ is a path, and at least $\lambda n-2$ paths are required to strongly separate the edges of $K^{\lambda}(n)$, so $\mathcal{F}_n$ contains at least $\lambda n-2$ paths that do intersect $K^{\lambda}(n)$. Therefore, $c_{\infty}(G_n)\geq n-4$, so $\frac{c_k(G_n)}{c_{\infty}(G_n)}\le r+o(1)$. The previous lower bound argument can be replicated to show that for large $n$ we have $c_k(G_n)\geq (1-\lambda)n-2+(r_k(\mathcal{K})-o(1))(\lambda n-2)=rn-o(n)$. Thus, $\frac{c_k(G_n)}{c_{\infty}(G_n)}=r+o(1)$, so $r_k(\Sigma)=r$.       
\end{proof}

In general, if for the class of all connected graphs $\mathcal{G}$ we have $r_k(\mathcal{G})=c$, then we can adjust the proof of the above theorem to show that every number in $[1,c]$ is the $k$-rainbow separation ratio of a class of connected graphs. The only meaningful question about the range of $r_k$ is therefore the following:

\begin{question}
    What is $r_k(\mathcal{G})$ for each $k$?
\end{question}

On a different note, one might also wonder about whether there are classes that are rainbow separable but have $\chi_{rs}=\infty$. Below is an example. We say that a binary tree is \emph{full} if every vertex apart from the leaves and the root has degree $3$. We say that a full binary tree is \emph{complete} if every leaf has the same distance from the root.

\begin{theorem}\label{binary}
    Let $\mathcal{BT}$ be the class of complete binary trees. Then $\chi_{rs}(\mathcal{BT)}=\infty$.
\end{theorem}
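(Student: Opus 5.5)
We must prove two things: (a) $\mathcal{BT}$ is rainbow separable, i.e.\ $r_k(\mathcal{BT})\to 1$ as $k\to\infty$; and (b) $r_k(\mathcal{BT})>1$ for every finite $k$.

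\textbf{Rainbow separability.} For a complete binary tree $T$ of depth $d$ we have $d_2=1$ (only the root) and $d_1=2^d$, so by Proposition~\ref{trees-strong} $c_\infty(T)=2^d+1$, which is roughly $(n+1)/2$ with $n=2^{d+1}-1$.

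For the upper bound, fix a depth $h$ and cut $T$ into edge-disjoint ``blocks''. Each block is a complete binary subtree of depth $h$ hanging from a vertex at depth divisible by $h$, together with the hat edge joining it to its parent. We separate each block using an optimal strongly separating system in which the paths inside a block receive distinct colors; this needs $O(2^h)$ colors per block. Blocks sharing vertices (parent and children) must use disjoint palettes, so we color the block tree properly with $O(1)$ palettes. This gives $k=O(2^h)$ colors in total.

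Edges in different blocks are separated via bicolor coverage: every edge lies in at least two paths of distinct colors, none of which meets the other block, except when the blocks are adjacent, and there the palettes are disjoint anyway. The cost per block is $c_\infty(\text{block})$ plus $O(1)$ for gluing. Summing over blocks gives $\bigl(\tfrac12+O(2^{-h})\bigr)n$. The extra term arises because paths cannot cross block boundaries: each block loses the leaf pairing, and its root has degree $2$ inside the block. Since the internal junctions have density about $2^{-h}$, we get $r_k\le 1+O(1/\log k)$. The gluing cost must be checked carefully, but the plan is clear.

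\textbf{Strict inequality for fixed $k$.} This is the main obstacle. I would adapt the counting arguments of Theorem~\ref{complete} and the spider theorems.

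In a strongly separating system of size close to $n/2$, almost every path joins two leaves. Each leaf edge must be separated from its sibling leaf edge, and each vertex of degree $3$ needs the paths ending there to separate its three incident edges pairwise.

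Consider a bottom cherry: a depth-$(d-1)$ vertex $v$ with leaf edges $a$, $b$ and parent edge $p$. Among paths with endpoint leaves, the leaf of $a$ is the endpoint of some path $P_a$, and similarly for $b$. Each leaf edge must lie in paths of two distinct colors avoiding its sibling.

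Now use the auxiliary graph of Theorem~\ref{complete}: vertices are paths, and two paths are adjacent when they share an edge that lies in at most two paths. Its maximum degree is at most $k$. Long paths in $T$ have length about $2d$, so the average coverage of edges is about $d$. Near the leaves, however, coverage is small: a leaf edge lies in about one path in an optimal system. Rainbow separation forces each leaf edge into two paths of distinct colors not containing the sibling. This is a Proposition~\ref{inequality}-type phenomenon localized at cherries.

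Concretely, I would show that if the system has size $(1+\varepsilon)n/2$, then at most $O(\varepsilon n)$ leaf edges lie in two or more paths. The remaining leaf edges each lie in exactly one path, and that path also covers $p$ or the sibling. Separating such a leaf edge from its sibling with two distinct colors then forces each cherry to contain an extra short path, such as a single-edge path at that leaf edge. A single-edge path ending at an internal vertex wastes endpoints, and a bounded number of colors forces repetition along the auxiliary graph, as in Figure~\ref{contradiction}.

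Turning this into a quantitative gain of $c(k)n$ extra paths is where I expect the difficulty. I would first try a double count of path endpoints against the $2^{d}$ leaves and $2^{d}-1$ internal degree-$3$ vertices. In a system of size about $n/2$, each degree-$3$ vertex must be an endpoint of roughly one path; this is exactly what forbids the few-colors configuration by a pigeonhole argument over the $k$ colors in bounded-depth neighborhoods.
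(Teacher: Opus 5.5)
Your rainbow-separability half is a workable alternative to the paper's construction. The paper takes the Balogh et al.\ system, adds single-edge paths in a $k$-th color above the bottom $\approx\log_2(2k)$ levels, and colors the at most $k-1$ paths inside each bottom subtree distinctly. Your block version works once the details are fixed. The blocks must be depth-$(h-1)$ subtrees plus hat to be edge-disjoint. They must be aligned from the leaves, since a bottom layer of truncated blocks (e.g.\ bare leaf edges) costs a constant factor. Each block needs an optimal system with all paths leaf-to-leaf, so that every edge is covered twice. With that, all blocks can share one palette, and you in fact get $1+O(1/k)$.

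The genuine gap is the second half, which carries the content of the theorem. There your sketch rests on claims that are false.
\begin{itemize}
\item In any $k$-RSPS, two edges covered only by paths of one and the same color cannot be separated, so all but at most $k$ edges lie in at least two paths. Your intended lemma, that all but $O(\varepsilon n)$ leaf edges lie in exactly one path, therefore fails for every RSPS once $n>2k$. Also, a leaf edge lying in a single path that ``also covers $p$ or the sibling'' cannot be separated from that edge.
\item In a near-optimal system the average coverage is about $2$, not $d$.
\item Counting endpoints ($2|\mathcal F|\approx(1+\varepsilon)n$, with two needed at almost every leaf) shows that internal vertices carry only $O(\varepsilon n+k)$ endpoints altogether, not about one each.
\item No cherry-local or fixed-depth pigeonhole can work for $k\ge 3$. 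The paths $\{a,b\}$, $Q_a\supseteq\{a,p\}$, $Q_b\supseteq\{b,p\}$ in three distinct colors separate a cherry with no extra path, so the obstruction must live at a depth that grows with $k$.
\end{itemize}

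The missing idea is a stability statement followed by a long-path argument. The paper shows that if $|\mathcal F|=\frac{n+1}{2}+\varepsilon(n+1)$, then:
\begin{itemize}
\item all but $O(\varepsilon n+k)$ leaf edges lie only in leaf-to-leaf paths;
\item for each internal vertex $w$, some path of $\mathcal F$ contains $w$ and stays in the subtree of $w$, and these $\approx\frac{n+1}{2}$ paths have total length at most $2n+O(\log n)$;
\item the other $\varepsilon(n+1)$ paths meet only $O(\varepsilon\log(1/\varepsilon)\,n)$ edges.
\end{itemize}
Since almost every edge is covered at least twice, all but $O(\sqrt{\varepsilon}\,n)$ edges on each level are covered exactly twice, by leaf-to-leaf paths. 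Hence some bottom subtree of height $h\approx\frac14\log_2(1/\varepsilon)$ consists only of such edges. A leaf-to-leaf path $P$ through a top edge of that subtree descends through $h$ such edges $e_1,\dots,e_h$. Each $e_i$ lies in exactly one other path $Q_i$, and separating $e_i$ from $e_j$ forces $Q_i\ne Q_j$ with distinct colors. So $h\le k$, which gives $\varepsilon\ge 16^{-(k+6)}$.

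This last step is exactly your auxiliary-graph degree bound, so you had the right finishing tool. What you lack is the structural step that produces a path with more than $k$ exactly-twice-covered edges, and your heuristics point the opposite way.
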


\begin{proof}
    Let $\mathcal{F}_n$ be a $k$-RSPS of the complete binary tree on $n$ vertices, which we denote $T_{n,2}$, where $n$ is of the form $2^r-1$. Suppose that $|\mathcal{F}_n|=\frac{n+1}{2}+\varepsilon (n+1)$ for $\varepsilon> 0$. At most $k$ edges of $T_{n,2}$ may each be in a unique path of length $1$ of $\mathcal{F}_n$. Every other edge, including those incident to the leaves, must be in at least two paths. Let $X$ be the set of leaf-edges that are only contained in paths that are bounded by leaves, let $Y$ be the set of leaf-edges that are only contained in paths of length at least $2$, at least one of which is not bounded by leaves, let $Z$ be the set of leaf-edges that are contained in one path of length $1$ and are otherwise contained only in paths bounded by leaves, and let $W$ be the set of leaf-edges that are contained in at least two paths that are not bounded by leaves, including at least one path of length $1$. Note that $X$, $Y$, $Z$ and $W$ partition the leaf-edges of $T_{n,2}$, i.e. $|X|+|Y|+|Z|+|W|=\frac{n+1}{2}$. Each edge in $X$ must be in at least two paths. Each edge in $Y$ must be in at least one path bounded by leaves, in order to be distinguished from its parental edge. So, in total, the leaf-edges of $T_{n,2}$ account for at least $|X|+\frac{|Y|}{2}+\frac{|Z|-k}{2}$ paths bounded by leaves. Moreover, the edges in $Z$ and $W$ contribute $|Z|+|W|$ more paths of length $1$. Finally, the edges in $Y$ and $W$ account for at least $|Y|+|W|$ more paths that are not bounded by leaves. Thus, \[|X|+\frac{3}{2}|Y|+\frac{3|Z|-k}{2}+2|W|\leq \frac{n+1}{2}+\varepsilon (n+1)\Rightarrow\] \[2|X|+3\left(\frac{n+1}{2}-|X|-|Z|-|W|\right)+3|Z|-k+4|W|\leq n+1+2\varepsilon (n+1)\Rightarrow\] \[|X|\geq\frac{n+1}{2}-2\varepsilon (n+1)-k+|W|.\] So, at least $\frac{n+1}{2}-2\varepsilon (n+1)-k$ leaf-edges of $T_{n,2}$ must only be in paths of $\mathcal{F}_n$ that are bounded by two leaves. In particular, there are at least $\frac{n+1}{2}-2\varepsilon (n+1)-k$ such paths in $\mathcal{F}_n$, so there are at most $3\varepsilon (n+1)+k$ paths in $\mathcal{F}_n$ that are not bounded by leaves. Since each path contains at most two edges in each level, all but at most $6\varepsilon (n+1)+2k$ edges in each level are each contained only in paths bounded by leaves.  

    We also note that for each internal vertex of $T_{n,2}$, there exists at least one path in $\mathcal{F}_n$ that contains this vertex but not its parent. This means that we have $\frac{n+1}{4}$ paths in $\mathcal{F}_n$ of length at most $2$, $\frac{n+1}{8}$ additional paths of length at most $4$, $\frac{n+1}{16}$ additional paths of length at most $6$, $\dots$, two of length at most $2(r-2)$. Moreover, there are two paths that separate the two edges incident to the root. In total, $\frac{n+1}{2}$ of the paths in $\mathcal{F}_n$ have a total length of at most $\sum_{i=1}^{\infty}\frac{i}{2^i}(n+1)+2(r-1)=2(n+r)$. It is easy to see that the remaining $\varepsilon (n+1)$ paths cover at most $(3+2\log_2{\varepsilon}^{-1})\varepsilon (n+1)$ distinct edges. Indeed, they may cover the at most $\varepsilon (n+1)$ edges of the initial subtree of $T_{n,2}$ on $\floor{\log_2 \varepsilon (n+1)}$ levels, as well as at most $2\varepsilon (n+1)$ edges on each remaining level, where the number of the remaining levels is $\log_2 (n+1)-\floor{\log_2{\varepsilon (n+1)}}\leq \log_2{\varepsilon^{-1}}+1$. There are at least $n-k-1$ edges that are covered at least twice by paths of $\mathcal{F}_n$, and at least $n-k-1-((3+2\log_2{\varepsilon}^{-1})\varepsilon (n+1))$ of them are only covered by the $\frac{n+1}{2}$ paths of total length at most $2(n+r)$. Thus, at most $2(n+r)-2(n-k-1-((3+2\log_2{\varepsilon}^{-1})\varepsilon (n+1)))=2(r+k+1)+2((3+2\log_2{\varepsilon}^{-1})\varepsilon (n+1))$ such edges can be covered more than twice. Along with the up to $(3+2\log_2{\varepsilon}^{-1})\varepsilon (n+1)$ edges covered by the remaining $\varepsilon(n+1)$ paths and at most $k$ edges covered by only one path, there are at most  $2(r+k+1)+(9+6\log_2{\varepsilon}^{-1})\varepsilon (n+1)+k$ edges not covered by exactly two paths.

   From all of the above, and as $6\varepsilon (n+1)+2k+2(r+k+1)+(9+6\log_2{\varepsilon}^{-1})\varepsilon (n+1)+k<16\sqrt{\varepsilon}n$ for $n$ large enough, we deduce that all but at most $16\sqrt{\varepsilon}n$ edges in each level of $T_{n,2}$ lie in exactly two paths of $\mathcal{F}_n$, both of which are bounded by leaves. In particular, when $h:=\floor{\frac{1}{4}\log_2{\varepsilon^{-1}}}-5$ is positive, there is a final subtree $T$ of height $h$ that contains only such edges. This is because $16h\sqrt{\varepsilon}n<\frac{n+1}{2^{h+1}}$, that is, there are more final subtrees of height $h$ than there are ``bad" edges up to height $h$. There exists a path $P$ in $\mathcal{F}_n$ such that the subpath $P\cap T$ has length at least $h$ and contains the root of $T$. All the edges in $P\cap T$ are contained in exactly one more path, and those other paths are pairwise distinct and have distinct colors, as otherwise the edges of $P\cap T$ would not all be separated from each other. Thus, there are at least $h$ colors. We conclude that $h\leq k$, so $\varepsilon\geq \frac{1}{16^{k+6}}>0$. All of the above was done under the assumption that $\varepsilon>0$; if $\varepsilon=0$, then we can add a linear number of junk paths to $\mathcal{F}_n$ (but fewer than $\frac{n+1}{16^{k+6}}$) and obtain a contradiction. 

    \begin{figure}
    \centering
    \includegraphics[width=0.4\linewidth]{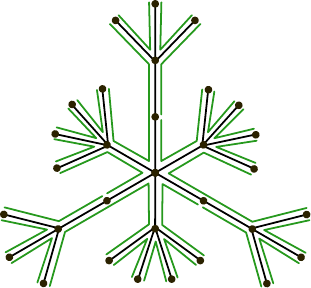}
    \caption{The construction of Balogh et al. for the strong separation of trees.}
    \label{fig:balogh}
\end{figure}

    For the opposite inequality, for $n$ large enough, we take the strongly separating path system $\mathcal{F}_n$ constructed in the proof of Theorem 5 of Balogh et al. \cite{balogh2016path} (see Figure \ref{fig:balogh}), and we add to it as a path each edge that is not in the last $\floor{\log_2(2k)}-1$ edge-levels. We visit each of the subtrees of $T_{n,2}$ in the last $\floor{\log_2 (2k)}$ levels and color each of the at most $k-1$ paths that live in it a different color among the first $k-1$ colors in our possession. Each subtree in the last $\floor{\log_2(2k)}$ levels has $2(\floor{\log_2(2k)}-1)$ edges contained in just one path staying entirely within that subtree, and we also add each such edge as a path to our system. We then color every remaining path from the original construction arbitrarily with one of the first $k-1$ colors, and we color both types of added single-edge paths with the $k^{th}$ color. 
    
    It is easy to see that this is a $k$-RSPS of $T_{n,2}$. Indeed, for two edges $e,f$, either $e$ is contained in a single-edge path of color $k$ and $f$ is contained in a path from the original construction which does not include $e$ and is some other color, or both $e,f$ are contained in two paths entirely contained in a subtree in the last $\floor{\log_2(2k)}$ levels and these paths have two different colors from the first $k-1$. In that case, $e,f$ are still separated from each other as the respective paths containing each are contained in separate subtrees within the last $\floor{\log_2(2k)}$ levels and we can simply pick any path containing $e$ and one of a different color containing $f$, or $e,f$ are in the same subtree within the last $\floor{\log_2(2k)}$ levels and the paths used to separate them in the original construction were assigned different colors. This $k$-RSPS uses at most \[\frac{n+1}{2}+2^{\log_2 (n+1)-(\floor{\log_2 (2k)}-1)}-2+(2^{\log_2(n+1)-\floor{\log_2(2k)}})(2(\floor{\log_2(2k)}-1))\leq\] \[\left(\frac{n+1}{k}\right)\frac{k+2\log_2k+6}{2}\]
    paths. When dividing by $c_{\infty}(T_{n,2})=\frac{n+1}{2}$, this tends to $1$ as $k\to\infty$.
\end{proof}

\begin{remark}\label{extension}
    The proof of Theorem \ref{binary} can easily be adapted to some structured perturbations of complete binary trees, for example to full binary trees in which only the last level is not complete. Indeed, the above proof only uses that the number of leaves is the ceiling of half the number of vertices (which is true for every full binary tree), and that, for every fixed height $h$, the number of terminal complete binary subtrees of $T$ of height $h$ grows linearly with the number of leaves. Of course, the constant $\varepsilon$ will be different.        
\end{remark}

Theorem \ref{binary} has an interesting consequence. Given a graph $G$, we define the \emph{chromatic separation number} $\chi_{rs}(G)$ ass the minimum $k$ such that $r_k(G)=1$, that is $c_k(G)=c_{\infty}(G)$. Constructing graphs with specified chromatic numbers may be useful in constructing classes of graphs with specified chromatic numbers.

\begin{corollary}\label{corollary}
    For $k\geq 3$, there are infinitely many trees with chromatic separation number $k$.
\end{corollary}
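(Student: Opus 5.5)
The plan is an intermediate-value argument. I would find long chains of trees, each tree obtained from the previous one by a small local modification, along which $\chi_{rs}$ increases by at most one per step. Each chain should start at a tree with $\chi_{rs}\le 3$ and end at a tree with $\chi_{rs}>k$. Such a chain must then pass through a tree with $\chi_{rs}=k$ exactly. If the chains can be chosen with arbitrarily large starting trees, this gives infinitely many distinct trees with chromatic separation number $k$.

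The natural setting is full binary trees, growing one "cherry" at a time: replace a leaf $u$ with parent edge $f$ by an internal vertex carrying two new leaf edges $e_1,e_2$. By Proposition~\ref{trees-strong}, $d_1$ rises by $1$ and $d_2=0$ throughout, so $c_\infty(T)=c_\infty(T')+1$, where $T'$ is the tree before the step and $T$ the tree after it.

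\textbf{Increment lemma.} I would show that $c_{j+1}(T)\le c_j(T')+1$. Start from an optimal $j$-RSPS $\mathcal F'$ of $T'$ and proceed as follows.
\begin{enumerate}
\item Extend every path of $\mathcal F'$ that ends at $u$ through $e_1$.
\item Add the single-edge path $\{e_2\}$ in a brand new $(j+1)$-st colour.
\end{enumerate}
Then I would check separation, using that $f$ must already lie in a path of $\mathcal F'$ ending at $u$:
\begin{itemize}
\item Old pairs not involving $f$ stay separated, since extension only adds $e_1$.
\item Pairs involving $f$ stay separated, since the paths containing $f$ are only extended beyond $u$.
\item The edge $e_2$ is separated from everything by its private colour, together with any path avoiding $e_2$ that contains the other edge.
\item The pairs $\{e_1,f\}$ and $\{e_1,x\}$ need the most care. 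Here I expect to need a small modification, for instance extending only one path through $e_1$ and keeping another path ending at $u$ unextended.
\end{itemize}
Applied with $j=\chi_{rs}(T')$, the lemma gives $c_{j+1}(T)\le c_\infty(T)$, hence $\chi_{rs}(T)\le\chi_{rs}(T')+1$.

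For the endpoints of each chain:
\begin{itemize}
\item \textbf{Large end.} Theorem~\ref{binary}, together with Remark~\ref{extension}, shows that sufficiently large complete (or nearly complete) binary trees satisfy $c_k>c_\infty$. So every chain can be driven to $\chi_{rs}>k$ by continuing to add cherries until the tree is a complete binary tree of large height.
\item \textbf{Small end.} To get infinitely many distinct trees, I need starting trees of unbounded size with $\chi_{rs}\le 3$ that can be grown into complete binary trees using cherry steps. The natural candidates are "caterpillar" full binary trees, meaning a spine with one pendant leaf at each internal vertex. For these I would try to build a $3$-RSPS of size $d_1$ directly, gluing copies of the building block of Figure~\ref{star_3col} (and the $(2,1,1)$ configuration of Figure~\ref{example}) along the spine in the style of Theorem~\ref{3colpath}. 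The aim is to keep the bicolor edge-coverage property between consecutive blocks.
\end{itemize}
Given such a start of size at least $N$, the chain from it up to a large complete binary tree contains a tree with $\chi_{rs}=k$ and at least $N$ vertices. Letting $N\to\infty$ gives infinitely many such trees.

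The main obstacle is this small end: producing arbitrarily large full binary trees with $\chi_{rs}\le 3$. There is tension with the lower-bound mechanism of Theorem~\ref{binary}, which forces many colours whenever many leaf edges lie only in leaf-to-leaf paths. A caterpillar avoids this because its pendant leaves attach along a single spine, but exhibiting an explicit $3$-colour system of size exactly $d_1$ needs care at the ends of the spine.

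If this fails, my fallback is to replace "$\le 3$" by "$\le k$". For example, I would look for a start whose $\chi_{rs}$ can be bounded by $k$ through a direct construction of the kind in the upper-bound part of Theorem~\ref{binary}, since the intermediate-value argument only needs some starting tree with $\chi_{rs}\le k$. The second delicate point is the pair $\{e_1,f\}$ in the increment lemma, which I would settle by a short case check on the paths through $u$.
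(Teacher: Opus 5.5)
Your architecture is the same as the paper's: attach two leaves at a leaf, pay one fresh colour and one new path per step so that $\chi_{rs}$ rises by at most one, and let Theorem~\ref{binary} with Remark~\ref{extension} eventually force $\chi_{rs}>k$. However, two steps do not go through.

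\textbf{The increment lemma is broken.} Since $u$ is a leaf of $T'$, every path of $\mathcal{F}'$ through $u$ contains $f$. Whichever of these paths you extend into $e_1$, every path containing $e_1$ still contains $f$, and $\{e_2\}$ does not help. So no path contains $e_1$ but not $f$, and no case check on the paths through $u$ can repair this. With a single new path, both $e_1$ and $e_2$ need a path avoiding $f$. Hence the new path must be $e_1e_2$ itself, in the fresh colour. Then $e_1,e_2$ must be separated by two old paths through $f$ of \emph{different} colours, one extended into $e_1$ and one into $e_2$.

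This is exactly the paper's operation, and it requires $f$ to lie on paths of two distinct colours in the optimal witness. A $j$-RSPS has at most one monochromatic edge per colour, so suitable leaves exist, but up to $j$ leaves can be blocked. For example, a leaf edge lying on a single path admits no one-new-path extension at all. So you cannot grow cherries wherever you like, and ``continue until the tree is a complete binary tree of large height'' must be replaced by an argument that tolerates blocked leaves. The paper works with $t\ge k$ canonical copies and always grows inside a copy free of monochromatic edges, keeping at most two terminal levels per copy. It then finishes with a counting argument over the canonical subtrees rather than a direct appeal to Theorem~\ref{binary}.

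\textbf{The small end is missing.} You need trees of unbounded size with $c_j=c_\infty$ for some $j\le k$, and none is produced; the caterpillar system is only hoped for. The fallback cannot work either. The upper-bound construction in Theorem~\ref{binary} adds single-edge paths on top of an optimal strongly separating system, so it always uses more than $c_\infty$ paths and never certifies $\chi_{rs}\le k$.

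The paper's starting family is $T^0(t)$, namely $t$ copies of $T_{7,2}$ glued at a child of the root. Here $c_5(T_{7,2})=c_\infty(T_{7,2})=5$, witnessed by a system with no monochromatic edge, so the copies combine. Proposition~\ref{trees-strong} gives $c_\infty(T^0(t))=5t$, hence $\chi_{rs}(T^0(t))\le 5$ for every $t$. This covers $k\ge 5$. The cases $k=3,4$ are handled directly:
\begin{itemize}
\item $k=3$: stars with at least five legs, via Theorems~\ref{thm:stars} and~\ref{star3};
\item $k=4$: spiders with at least $25$ legs of length two, via the $\frac{17}{16}$ lower bound for $c_3$ together with Theorem~\ref{spider4}.
\end{itemize}
If you want to keep your uniform route through caterpillars, proving $\chi_{rs}\le 3$ for an infinite family of full binary trees is a genuine missing step, not a detail.
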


\begin{proof}
We have already encountered infinitely many trees with chromatic separation numbers $3$ and $4$, namely stars with at least five legs and spiders with at least twenty-five legs all of which have length $2$, respectively. For the complete binary tree of depth $3$, we have that $c_5(T_{7,2})=c_{\infty}(T_{7,2})=5$. So, $\chi_{rs}(T_{7,2})\leq 5$. 

Let $T'$ be a tree and suppose that $\chi_{rs}(T')=k-1$, as witnessed by a $(k-1)$-RSPS $\mathcal{F'}$. Suppose, moreover, that there exists a leaf $v$ of $T'$ such that the edge incident to it is contained in at least two paths of $\mathcal{F'}$ of different colors, say $P_x$ and $P_y$. Let us attach to $v$ two new vertices $x$ and $y$. We extend $P_x$ to $x$ and $P_y$ to $y$, and then we add to $\mathcal{F'}$ a new path $xvy$, colored in a new color. We end up with a new tree $T$ and a $k$-RSPS $\mathcal{F}$ such that $|\mathcal{F}|=|\mathcal{F}'|+1$. As $T$ has exactly one more leaf than $T'$ and the same number of degree $2$ vertices, by Proposition \ref{trees-strong}, we get $c_{\infty}(T)=c_{\infty}(T')+1=|\mathcal{F'}|+1=|\mathcal{F}|\geq c_k(T)\Rightarrow c_k(T)=c_{\infty}(T)$. So, $\chi_{rs}(T)\leq k$. Moreover, this operation does not decrease $\chi_{rs}$. Indeed, if there exists $k'\leq k-2$ such that $c_{k-2}(T)=c_{\infty}(T)$ as witnessed by a $(k-2)$-RSPS $\mathcal{H}$, then $\mathcal{H}$ induces a $(k-2)$-RSPS $\mathcal{H}'$ in $T'$ such that $|\mathcal{H}'|\leq |\mathcal{H}|-1$, because $\mathcal{H}$ contains a path that separates $\{v,x\}$ from its parental edge, so this path is absent in $\mathcal{H}'$ (rather than simply pruned). On the other hand, $|\mathcal{H}'|\geq c_{\infty}(T')=c_{\infty}(T)-1=|\mathcal{H}|-1$, so $|\mathcal{H}'|=|\mathcal{H}|-1=c_{\infty}(T')$, implying that $\chi_{rs}(T')\leq k-2$, which contradicts our original assumption. So, $k-1\leq \chi_{rs}(T)\leq k$. 

Now, let $t\geq k\geq 5$, and let us consider the tree $T^0(t)$ that is formed by taking $t$ copies of $T_{7,2}$, choosing in each one a child of its root, and merging the selected children to a vertex $u$. In particular, whereas $T_{7,2}$ has four leaves and one vertex of degree $2$, $T^0(t)$ has $4t$ leaves and $t$ vertices of degree $2$. We have $c_5(T^0(t))\geq c_{\infty}(T^0(t))=5t=t\cdot c_{\infty}(T_{7,2})=t\cdot c_5(T_{7,2})\geq c_5(T^0(t))$, so $\chi_{rs}(T^0(t))\leq 5$. Let $k_0:=\chi_{rs}(T^0(t))$, as witnessed by a minimal $k_0$-RSPS $\mathcal{F}_0$. There are at most $k_0\leq k$ edges in $T^0(t)$ that are contained only in a monochromatic set of paths in $\mathcal{F}_0$; in particular, if $k_0<k\leq t$, then one of the $t$ original copies of $T_{7,2}$ in $T^0(t)$ does not contain any of them. So, we can perform the operation of attaching two new leaves on any leaf of that copy, as described in the previous paragraph. We denote the resulting tree by $T^1(t)$, and we let $k_1:=\chi_{rs}(T^1(t))$, and $\mathcal{F}_1$ be a minimal $k_1$-RSPS of $T^1(t)$. Then, if $k_1<k\leq t$, again we can perform our operation on a leaf of one of the $t$ canonical subtrees of $T^1(t)$ that does not contain monochromatic edges with respect to $\mathcal{F}_1$, thus obtaining $T^2(t)$, and so on. Each time, we perform the operation in such a way that every canonical subtree ends up having at most two terminal levels. 

As long as $k_r<k$, we can continue repeating the operation. Because of how we complete levels, as $r$ increases, increasingly larger members of $\mathcal{BT}$ appear as subtrees of some canonical subtrees $S^{r,1},\dots,S^{r,s}$ of $T^r(t)$, where $1\leq s\leq t$. Let $r$ be large enough, say $r>>\varepsilon^{-1}$, where $\varepsilon$ is the constant obtained from Theorem \ref{binary} and Remark \ref{extension} for full binary trees with all levels complete except perhaps for the last one, and for $k_r$ colors. Suppose that $S^{r,1},\dots,S^{r,s}$ have $\ell_{r,1},\dots,\ell_{r,s}$ leaves, respectively. Then for each $i\in [s]$, $c_{k_r}(S^{r,i})\geq (1+\varepsilon)c_{\infty}(S^{r,i})$. Moreover, as $r$ is large enough, these unbounded canonical subtrees together contain at least $(1-\varepsilon^2)\ell_r$ leaves, where $\ell_r:=4t+r$ is the number of leaves of $T^r(t)$. If $\mathcal{F}_r$ is a $k_r$-RSPS of $T^r(t)$ with $c_{\infty}(T^r(t))$ paths, then it induces, in each $S^{r,i}$, a path system of at least $(1+\varepsilon)(\ell_{r,i}+1)$ paths. Out of these, $\ell_{r,i}-3$ do not contain the vertex $u$, as they are required to separate, for each of the vertices of degree $3$ (except $u$) of $S^{r,i}$, one of the edges below it from the edge above it. Each of the other paths in $\mathcal{F}_r$ may be shared between two canonical subtrees of $T^r(t)$. In total,

\[|\mathcal{F}_r|\geq \left((1-\varepsilon^2)\ell_r-3s\right)+\frac{(1+\varepsilon)\left((1-\varepsilon^2)\ell_r+s\right)-\left((1-\varepsilon^2)\ell_r-3s\right)}{2}+\frac{\varepsilon^2\ell_r+(t-s)}{2}=\] \[(1+\frac{\varepsilon}{2}-\frac{\varepsilon^2}{2}-\frac{\varepsilon^3}{2})\ell_r+-\frac{3-\varepsilon}{2}s+\frac{t}{2}>\ell_r+t=c_{\infty}(T^r(t)),\] a contradiction.   

So, we cannot perform the operation indefinitely; there exists some minimum $r_0\in\mathbb{N}$ such that $k_{r_0}\geq k$. As the increment of $k_r$ is at most $1$, we have $k_{r_0}=k$. As $t\geq k$ is arbitrary, this yields infinitely many trees with chromatic separation number $k$.

\end{proof} 

\section{Future Work}

We wonder whether the construction of Corollary \ref{corollary} can be modified to yield a class of connected graphs with chromatic separation number $k$. Note that, if one has infinitely many graphs of chromatic separation number $k$, the family formed by these graphs could have chromatic separation number strictly less than $k$. To avoid this, we additionally require that any $(k-1)$-RSPS on sufficiently large members of the class to have a linear surplus of paths with respect to $c_{\infty}$. So, lumping together our infinitely many examples obtained from Corollary \ref{corollary} is not sufficient. 

One could attempt to obtain an example by gluing copies of our tree $T^{r_0}(t)$ along a vertex, for some fixed $t\geq k$. In this case, we suspect that the number of paths in excess of $c_{\infty}$ scales linearly with the number of copies, as required. We did not manage to prove this, because some paths can contain edges from two copies, which significantly affects the calculation. We therefore pose the following question. 

\begin{question}
    Does there exist, for every $k\geq 2$, a class $\mathcal{C}$ of graphs with $\chi_{rs}(\mathcal{C})=k$?
\end{question}

Let us also reiterate the question that lies at the heart of this endeavor: what is it that controls the behavior of the sequence $r_k$ for a class of graphs $\Sigma$? In particular, what determines the chromatic separation number? In a companion note, Micha Christoph and the second author present an example of a class of dense graphs that is rainbow separable. They also prove that the class of all trees is rainbow separable; in particular, this is a rainbow separable class of unbounded pathwidth. In light of this last result, we propose the following conjecture.   

\begin{conjecture}
Any class of graphs of bounded treewidth is rainbow separable.
\end{conjecture}

\bibliographystyle{plain}
\bibliography{main}

\appendix

\section{On the lower bound of Theorem \ref{thm:spiders}}\label{lowerbound}
The lower bound in Theorem~\ref{thm:spiders} is not always tight. Because $c_2(S_{n,q})\ge c_2(S_{n-1,q})+1$, the lower bound is only tight if every edge added in the process of extending $S_{2q+1,q}$ to $S_{n,q}$ increases $c_2$ by exactly $1$ and the restriction of the RSPS to the inner $S_{2q+1,q}$ has $2q+\lfloor{2q/3\rfloor}$ paths if $q\equiv 1 \text{ mod } 3$ or $q=3$, and $2q-1+\lfloor{2q/3\rfloor}$ paths otherwise. 

We now consider the implication for spiders with five or six legs. To have any chance of obtaining the lower bound $n-2+\lfloor{\frac{2q}{3}\rfloor}=n+1$ for a spider with five legs, the inner $S_{11,5}$ must have a $2$-RSPS of size $12$. First we show that in such a $2$-RSPS, each terminal edge of the inner $S_{11,5}$ is contained in exactly one path of length $1$. Assume for the sake of contradiction that some such edge is contained in two such paths, which are clearly separate colors. Then, the remaining four terminal edges need to be contained in paths of length one. Without loss of generality, either
\begin{itemize}
    \item[(i)] All four remaining terminal edges are contained in red paths of length $1$,
    \item[(ii)] Three of the remaining terminal edges are contained in red paths of length $1$ and the other is contained in a blue path of length $1$, or
    \item[(iii)] Two of the remaining terminal edges are contained in red paths of length $1$ and the other two are contained in blue paths of length $1$.
\end{itemize}

In case (i), the remaining red paths form a weakly separating path system for a star with five edges, so there are at least three by Proposition \ref{trees-weak}. The remaining blue paths form a weakly separating path system for a spider with four legs of length $2$ and one of length $1$, so by Proposition \ref{trees-weak}, there are at least $\lceil{\frac{2(5-1)+4}{3}\rceil}=4$. This gives a total of $6+3+4=13$ paths, which is too many.

In case (ii), the remaining red paths form a weakly separating path system for a spider with four legs of length $1$ and one of length $2$, and the blue paths form a weakly separating path system for a spider with three legs of length $2$ and two of length $1$. Thus, by Proposition \ref{trees-weak}, there are at least $\lceil{\frac{2(5-1)+1}{3}\rceil}=3$ additional red and $\lceil{\frac{2(5-1)+3}{3}\rceil}=4$ additional blue paths. Again, this gives a total of $6+3+4=13$ paths, which is too many.

In case (iii), the remaining red paths form a weakly separating path system for a spider with three legs of length $1$ and two of length $2$. The remaining blue paths do as well, so by Proposition~\ref{trees-weak}, there are a total of at least $6+2\lceil{\frac{2(5-1)+2}{3}\rceil}=14$ paths, a contradiction.

Thus, when restricting a rainbow path separating system of size $n+1$ for $S_{n,5}$ to the inner $S_{11,5}$, each terminal edge of the inner $S_{11,5}$ must be contained in exactly one path of length $1$.

To have any chance of obtaining the lower bound $n-2+\lfloor{\frac{2q}{3}\rfloor}=n+2$ for six legs, the inner $S_{13,6}$ must have a $2$-RSPS of size $15$. First we show that in such a $2$-RSPS, each terminal edge of the inner $S_{13,6}$ is contained in exactly one path of length $1$. Assume for the sake of contradiction that some such edge is contained in two such paths, which are clearly separate colors. Then the remaining five terminal edges need to be contained in paths of length $1$. Without loss of generality, either
\begin{itemize}
    \item [(i)] All five remaining terminal edges are contained in red paths of length $1$,
    \item[(ii)] Four of the remaining terminal edges are contained in red paths of length $1$ and the other is contained in a blue path of length $1$, or
    \item[(iii)] Three of the remaining terminal edges are contained in red paths of length $1$ and the other two are contained in blue paths of length $1$.
\end{itemize}
The remaining red paths must form a weakly separating path system for the edges not yet covered by red paths and the remaining blue paths must form a weakly separating path system for the edges not yet covered by blue paths. By Proposition \ref{trees-weak}, this gives lower bounds of $\lceil{\frac{2(6-1)+5}{3}\rceil}+\lceil{\frac{2(6-1)}{3}\rceil}, \lceil{\frac{2(6-1)+4}{3}\rceil}+\lceil{\frac{2(6-1)+1}{3}\rceil}$, and $\lceil{\frac{2(6-1)+3}{3}\rceil}+\lceil{\frac{2(6-1)+2}{3}\rceil}$, respectively for the number of additional paths in the three cases, all of which are equal to $9$. Together with the at least seven paths of length $1$ for the terminal edges, this is at least $16$, a contradiction.

For $q=5,6$, suppose that some edge $e$ in the inner star $S_{q+1}$ is contained in exactly one red path and one blue path. The next edge, $f$, along the same leg is contained in exactly one path that does not contain $e$. Without loss of generality, this path is red. Then, either $f$ is contained in no blue path, or it is contained only in the blue path containing $e$. In the latter case, there is no way to separate $e,f$ since there is no blue path that contains exactly one of them. Thus $f$ is a monochromatic edge contained in one path. 

Therefore, either every edge in the inner $S_{q+1}$ is contained in at least three paths, or the inner $S_{2q+1,q}$ has some monochromatic edge contained in at most two paths. The first scenario cannot happen for $q=5$. Indeed, as each terminal edge of the $S_{11,5}$ is contained in a path that misses the penultimate edge on the same leg, there are at most $12-5=7$ paths that have a nonempty intersection with the inner $S_6$. Each of these contains at most two edges of the inner $S_6$ for a total of just $14$ path--edge incidences, not enough for each of the five edges of the $S_6$ to be contained in at least three paths. Thus, any possible $2$-RSPS of the desired size is such that some edge of the inner $S_{11,5}$ is monochromatic and contained in at most two paths. For $S_{n,6}$, there is also the option where every edge in the inner $S_{7}$ is contained in at least three paths.

For the eventual ease of handling this latter case for $S_{n,6}$, we will consider what happens when some edge of the inner $S_{3q+1,q}$, that is within the first three edges along each leg, is a monochromatic edge contained in at most two paths. For $q=5,6$, for a spider $S_{n,q}$ where all legs are arbitrarily long and, with respect to the $2$-RSPS, some edge of the inner $S_{3q+1,q}$ is monochromatic and contained in at most two paths, it means that even a leg of the inner $S_{3q+1,q}$ that includes a monochromatic edge contained in at most two paths was extended arbitrarily many times, only increasing $c_2$ by $1$ each time. We will show that this cannot happen.

Without loss of generality, assume that some edge in the inner $S_{3q+1,q}$ is contained in no red paths and at most two blue paths. Call this edge $e_1$, and let $e_2,e_3,\dots,e_l$ be the subsequent edges (in order) along this leg with $e_l$ incident to a leaf of $S_{n,q}$ and $l\ge 5$. Note that for $j=2,\dots,l$, there is exactly one path containing $e_j$ but $e_{j-1}$. If $e_j$ is not part of the inner $S_{2q+1,q}$, this is because the lower bound $n-2+\lfloor{2q/3\rfloor}$ can only be obtained if every edge added in the process of extending $S_{2q+1,q}$ to $S_{n,q}$ increases $c_2$ by exactly $1$. If $e_1$ is in the inner star and thus $e_2$ is part of the inner $S_{2q+1,q}$, it is because we have already established that for a $2$-RSPS of size $12$ for $S_{11,5}$ or of size $15$ for $S_{13,6}$, each terminal edge is contained in only one path of length $1$. 

For $j\ge 2$, there must be a pair of paths separating $e_j$ and $e_1$ and the path containing $e_j$ is necessarily red. Any red path containing $e_2$ does not contain $e_1$ but there is only one path containing $e_2$ but not $e_1$ so there is only one red path containing $e_2$. In order to separate $e_1,e_2$, at least one of the blue paths containing $e_1$ does not contain $e_2$, so $e_2$ is contained in at most one blue path. There is one path containing $e_3$ but not $e_2$. If it is blue, then for $e_3$ to be contained in a red path, it must be a red path that contains $e_2$. Since there is only one red path containing $e_2$, $e_2$ and $e_3$ would be contained in the same red paths and could not be separated. Thus the path containing $e_3$ but not $e_2$ is red. To separate $e_2,e_3$, there must be a blue path containing $e_2$ but not $e_3$. Since there is at most one blue path containing $e_2$, there must be exactly one blue path containing $e_2$ and none containing $e_3$. Thus $e_3$ is only contained in red paths, so to separate $e_3,e_4$, there must be a blue path containing $e_4$ but not $e_3$. Additionally $e_4$ must be contained in a red path, so it must be contained in a red path which also contains $e_3$. To separate $e_3,e_4$, they cannot be contained in the exact same red paths, so there must be an additional red path that contains $e_3$. That is, $e_3$ is contained in the single red path containing $e_2$, meaning $e_3$ is contained in exactly two red paths. If $e_4$ is contained in both, there is no path with $e_3$ but not $e_4$. Thus $e_4$ is contained in exactly one of the red paths containing $e_3$. If it is contained in the red path which also contains $e_2$, then $e_2,e_4$ are contained in the exact same red paths and cannot be separated. Thus $e_4$ is necessarily contained in the red path that has $e_3$ but not $e_2$.
Note that $e_5$ must be contained in both a red path and a blue path to be separated from $e_1$ and $e_3$, respectively. It is only contained in one path that does not contain $e_4$, so it must also be contained in a path containing $e_4$. However, $e_4$ is in only one path of each color, so either way, there is some color where $e_4,e_5$ are only contained in one path of that color and it is the same path, so they cannot be separated. Thus, there exists some spider (and hence infinitely many) with five legs, all of length at least two such that the lower bound of $n-2+\lfloor{\frac{2q}{3}\rfloor}$ is not tight. Additionally, this eliminates one of the two remaining cases for six legs. That is, for a $2$-RSPS for $S_{n,6}$ with arbitrarily long legs to obtain the lower bound of $n+2$, each edge of the inner $S_7$ must be contained in at least three paths. We may now assume that any monochromatic edge in the inner $S_{19,6}$ is also contained in at least three paths.

No path can contain more than two edges of the inner $S_7$. Furthermore, since only fifteen paths contain edges of the inner $S_{13,6}$ and six of those only contain terminal edges of the $S_{13,6}$, there are nine paths that contain any edges of the inner $S_7$. Therefore, each of these paths must contain exactly two edges of the inner $S_7$ and each edge of the inner $S_{7}$ is contained in exactly three paths. Among these nine paths, at least four are red and at least four are blue since $wsp(S_7)=4$. Without loss of generality, assume exactly four are blue. As at most one edge is contained only in blue paths and at most one is contained only in red paths, only the following distributions are possible for the number of blue paths containing each of the six edges of the inner $S_7$: (i) $3,1,1,1,1,1$, (ii) $2,2,1,1,1,1$, (iii) $3,2,1,1,1,0$, (iv) $2,2,2,1,1,0$.

Firstly, observe that case (i) is impossible. Indeed, each edge contained in a single blue path must be in a separate blue path, which means there are at least five blue paths on these edges instead of four.

In case (ii), let $a,b$ denote the two edges of the inner $S_7$ contained in two blue paths. Since each edge contained in a single blue path must be in a separate blue path, then each of these four blue paths must contain exactly one of $a,b$ and exactly one of the other four. Consider the terminal edges of $S_{13,6}$ incident to $a,b$. If one is monochromatic, it must be contained at least three paths. It is contained in one path of length $1$ (when restricted to $S_{13,6}$) and at most one other red path since the adjacent inner edge is contained in just one red path. Thus, such a terminal edge can only be monochromatic if it is contained in just blue paths. At most one edge is contained in just blue paths, so without loss of generality, the terminal edge $a'$ incident to $a$ is not monochromatic. Edge $a'$ must be contained in a red path of length $1$ (when restricted to $S_{13,6}$) as otherwise, the only red path containing it would be the single red path containing $a$ and there would be no way to separate $a,a'$. Thus, the blue paths containing $a'$ are a subset of those containing $a$. In fact, $a'$ is contained in exactly one of those blue paths: if contained in none, $a'$ is monochromatic, but if contained in both, then $a,a'$ are contained in the exact same blue paths and cannot be separated. Note that besides containing $a,a'$, this blue path also contains some other edge of the inner $S_7$ which was only contained in one blue path. That edge and $a'$ are contained in the exact same blue paths, and cannot be separated. Thus, case (ii) is also impossible.

For cases (iii) and (iv), we will establish that there is a monochromatic edge in the inner $S_7$ such that any path containing it also contains an edge from a different leg of the inner $S_{13,6}$ contained in no other paths of that color. We will show that similarly to when there is a monochromatic edge contained in just two paths, the leg containing such a monochromatic edge cannot be extended arbitrarily many times while only increasing $c_2$ by $1$ each time.

For case (iii), any terminal edge of the inner $S_{13,6}$ is contained in paths of both colors since there is already a monochromatic red edge and a monochromatic blue edge in the inner $S_7$. A terminal edge incident to an inner edge contained in at most one red (blue) path must be covered by a red (blue) path of length $1$ (when restricted to $S_{13,6}$) since otherwise the terminal edge is contained in no red (blue) paths or precisely the same set of red (blue) paths as the inner edge. Let $a$ be the inner edge covered by exactly two blue paths and $a'$ be its incident terminal edge. Since $a'$ is not monochromatic, it is contained in at least one of the blue paths containing $a$, but it must be contained in exactly one since otherwise $a,a'$ would be contained in the exact same blue paths. Since any edges contained in a single blue path cannot be contained in the same blue path, $a'$ and the three inner edges contained in a single blue path must be contained in four separate blue paths. These are the four blue paths that contain edges of the inner $S_7$, so any blue path containing the monochromatic blue edge also contains some edge from a different leg of the inner $S_{13,6}$ which is contained in a single blue path.

For case (iv), there is a red monochromatic edge in the inner $S_7$ so every terminal edge is contained in at least one blue path. Let $a,b$ be the inner edges contained in just one blue path and let $a',b'$ be their incident terminal edges, respectively. Each of $a',b'$ must be contained in a blue path of length $1$ (when restricted to $S_{13,6}$) as otherwise $a,a'$ or $b,b'$ are contained in the exact same blue paths. Any other blue path containing $a'$ ($b'$) also contains $a$ ($b$) so $a'$ ($b'$) is contained in at most two blue paths. Therefore, it suffices to assume $a', b'$ are not monochromatic so each much be contained in at least one red path. In fact, each must be contained in exactly one red path as otherwise, $a,a'$ or $b,b'$ are contained in exactly the same red paths. Thus, $a',b'$, and three of the inner edges are each contained in a single red path. No two edges contained in a single red path can be contained in the same red path, so these are five different paths. The path containing $a' (b')$ but not $a (b)$ is blue, so each of these red paths contains an edge of the inner $S_7$. As there are only five red paths that contain any edges of the inner $S_7$, any path containing the monochromatic red edge also contains some edge from a different leg of the inner $S_{13,6}$ which is contained in a single red path.

Now we consider what happens if some monochromatic edge $e_1$ of the inner $S_7$ is contained only in three paths which each contain some other edge of the inner $S_{13,6}$ appearing in just one path of that color. Without loss of generality, assume that color is red. Let $e_2, e_3$ be the next two edges along the leg of $S_{n,6}$ which contains $e_1$. Edge $e_2$ is contained in a single blue path which does not contain $e_1$. No other blue path can contain $e_2$ and if $e_2$ is monochromatic but only contained in one path, we are done by a previous argument. Thus, $e_2$ must be contained in at least one red path and any such red path also contains $e_1$. If $e_2$ is contained in three red paths, then $e_1,e_2$ are contained in the same red paths and cannot be separated. If $e_2$ is contained in just one red path, then there is no way to separate $e_2$ from the other edge in that path, in a different leg, that is contained in just one red path. Thus, $e_2$ is contained in exactly two red paths. Since there is already a monochromatic red edge, $e_3$ must be contained in a blue path. If the only blue path it is contained in is the one containing $e_2$, then $e_2,e_3$ are contained in the same set of blue paths and cannot be separated. Thus, there is a blue path containing $e_3$ and not $e_2$. There is only one path containing $e_3$ but not $e_2$, since the lower bound of $n+2$ can only be obtained if every edge added in the process of extending $S_{13,6}$ to $S_{n,6}$ increases $c_2$ by exactly $1$. Thus, there is no red path containing $e_3$ but not $e_2$. Edge $e_3$ is contained in at least one red path as otherwise it is a monochromatic edge covered by fewer than three paths. If $e_3$ is contained in both the red paths containing $e_2$, then there is no way to separate $e_2, e_3$. However, if $e_3$ is contained in exactly one of the red paths containing $e_2$, then there is some other edge in the inner $S_{13,6}$ contained in just that red path which cannot be separated from $e_3$. Thus there is no valid way to handle $e_3$, so there exists some spider (and hence infinitely many) with six legs, all of length at least $2$ such that the lower bound is not tight.
\end{document}